\newtheorem{theorem}{Theorem}[section]
\newtheorem{corollary}{Corollary}[section]
\newtheorem{lemma}{Lemma}[section]
\newtheorem{proposition}{Proposition}[section]
\theoremstyle{definition}
\newtheorem{definition}{Definition}[section]
\theoremstyle{remark}
\newtheorem{remark}{Remark}[section]
\newtheorem{example}{Example}[section]
\numberwithin{equation}{section}
\newcommand{\bslash}{\kern-0.1em\texttt{\scalebox{0.6}[1]{/}}\kern-0.15em \texttt{\scalebox{0.6}[1]{/}}}
\begin{document}
\title[The completeness and congruences of quasi-Boolean algebras]{The completeness and congruences of quasi-Boolean algebras}

\author{Xiaohao Liu}
\address{School of Mathematical Sciences, University of Jinan, No. 336, West Road of Nan Xinzhuang,
         Jinan, Shandong, 250022 P.R. China.}
\email{liuxiaohao@stu.ujn.edu.cn}

\author{Heyan Wang}
\address{School of Mathematical Sciences, University of Jinan, No. 336, West Road of Nan Xinzhuang,
         Jinan, Shandong, 250022 P.R. China.}
\email{wangheyan@stu.ujn.edu.cn}

\author{Wenjuan Chen}
\address{School of Mathematical Sciences, University of Jinan, No. 336, West Road of Nan Xinzhuang,
         Jinan, Shandong, 250022 P.R. China.}
\email{wjchenmath@gmail.com}

\begin{abstract}
Quasi-Boolean algebras were introduced as the generalization of Boolean algebras in the setting of quantum computation logic. In this paper, we investigate the completeness and congruences of quasi-Boolean algebras. First, we discuss the number of finite quasi-Boolean algebras and characterize the finite irreducible quasi-Boolean algebras. Second, we show the standard completeness of quasi-Boolean algebras. Finally, we prove that the variety of quasi-Boolean algebras satisfies the congruence extension property and provide a complete characterization of how congruences on a Boolean subalgebra can be extended to the whole quasi-Boolean algebra.

\end{abstract}

\keywords{Finite quasi-Boolean algebras, Quasi-Boolean algebras, Congruence extension property, Standard completeness}

\maketitle
\baselineskip=16.2pt

\dottedcontents{section}[1.16cm]{}{1.8em}{5pt}
\dottedcontents{subsection}[2.00cm]{}{2.7em}{5pt}
\section{Introduction}
\label{intro}

Boolean algebras, as the complemented distributive lattices, play an important role in the classical propositional logic and non-classical logic. Over the past few decades, many scholars have generalized Boolean algebras while preserving the lattice structure such as bounded De Morgan lattices \cite{DeM}, Nelson algebras \cite{Nelsonalgebras}, and others.
In 1993, Chaja introduced quasi-lattices ($q$-lattices, for short) \cite{q-lattice} and then defined an algebra of quasiordered logic as the generalization of a Boolean algebra \cite{quasiordered}.
Unlike the previously mentioned work, the algebra of quasiordered logic is based on generalizing the lattice to a $q$-lattice (or, an ordered relation to a quasi-ordered relation). In \cite{quasiordered}, Chaja showed that the quotient algebra of an algebra of quasiordered logic is a Boolean algebra, thereby establishing some relations between Boolean algebras and algebras of quasiordered logic. These works initiated the research from ordered algebras to quasi-ordered algebras.

In 2006, Ledda et al. introduced quasi-MV algebras in order to characterize the quantum computational logic \cite{qMVstandard}. The generalization from MV-algebras to quasi-MV algebras is analogous to the generalization from lattices to $q$-lattices. Ledda et al. showed that certain subalgebra of a quasi-MV algebra is an MV-algebra and this subalgebra is also isomorphic to certain quotient algebra of the quasi-MV algebra. Based on these relationships, Dvure$\breve{c}$enskij and Zahiri presented a  method for extending congruences from the MV-subalgebra to the entire quasi-MV algebra in \cite{congruenceadd}.
In 2010, Alizadeh et al. discussed the relationship between ideals (filters) of a $q$-lattice and ideals (filters) of its sublattice \cite{a11}. Such investigations can be extended to other quasi-ordered algebras. For instance, quasi-BL algebras are a type of quasi-ordered algebras that generalize both quasi-MV algebras and BL-algebras \cite{chen20}.
Chen and Xu studied the relationship between weak filters of a quasi-BL algebra and filters of its BL subalgebra \cite{chen23}, and also discussed how to get a weak filter from a filter of a BL subalgebra.

Quasi-Boolean algebras were introduced by Lv and Chen \cite{Engineering, QB} as the generalization of Boolean algebras. The original motivation for this study is to provide an algebraic framework for quantum computational logic that plays a role analogous to that of Boolean algebras in classical propositional logic.
Unlike the algebras of quasiordered logic, the unary operation of a quasi-Boolean algebra is not defined in terms of its binary operations.
Moreover, an algebra of quasiordered logic satisfying the involutive law is a quasi-Boolean algebra. Thus, quasi-Boolean algebras appear to be closer to Boolean algebras.

To characterize more universal properties and methods of quasi-ordered algebras, we further investigate quasi-Boolean algebras in this paper. On the one hand, the standard completeness of quasi-Boolean algebras should be discussed in order to prove the completeness of their corresponding logic. On the other hand, we present the relationship between quasi-Boolean algebras and their Boolean subalgebras to better understand the connections between quasi-ordered algebras and ordered algebras.
The paper is organized as follows. In Section 2, we recall some definitions and results which will be used in what follows. In Section 3, we discuss the number of finite quasi-Boolean algebras. In Section 4, we show the standard completeness theorem of quasi-Boolean algebras. Finally, we investigate the congruence extension property of the variety of quasi-Boolean algebras and provide a complete characterization of how congruences on a Boolean subalgebra can be extended to the whole quasi-Boolean algebra.
\section{Preliminary}
In this section, we recall some definitions and properties related to quasi-Boolean algebras that will be used in what follows.

\begin{definition}\label{Boolean}\cite{Boolean algebra}
An algebra $\mathbf{B}=\langle B ; \vee ,\wedge ,' ,0, 1 \rangle$ of type $\langle 2,2,1,0,0\rangle $ is called a \emph{Boolean algebra},
if it satisfies the following conditions for any $x,y,z\in B$,

(B1) $\langle B;\vee,\wedge\rangle $ is a lattice;

(B2) $x\vee (y\wedge z)=(x \vee y)\wedge (x\vee z)$ and $x\wedge (y\vee z)=(x \wedge y)\vee (x\wedge z)$;

(B3) $x\vee 1=1$ and $x\wedge 0=0$;

(B4) $x\vee x'=1$ and $x\wedge x'=0$.
\end{definition}

We abbreviate a Boolean algebra $\mathbf{B}=\langle B;\vee,\wedge,', 0,1 \rangle$ as $\mathbf{B}$ and denote the variety of all Boolean algebras by $\mathbb{B}$.

\begin{definition}\label{Definition 1}\cite{q-lattice}
An algebra $\mathbf{L}= \langle L ; \vee ,\wedge\rangle$ of type $\langle 2,2\rangle $ is called a \emph{quasi-lattice} $\mathbf{(}$q-lattice, for short$\mathbf{)}$,
if it satisfies the following conditions for any $x,y,z\in L$,

(QL1) $x\vee y=y\vee x$ and $x\wedge y=y\wedge x$;

(QL2) $x\vee (y\vee z)=(x \vee y) \vee z$ and $x\wedge (y\wedge z)=(x \wedge y)\wedge z$;

(QL3) $x\vee (x\wedge y)=x\vee x$ \,and\, $x\wedge (x\vee y)=x\wedge x$;

(QL4) $x\vee (y\vee y)=x\vee y$ \,and\, $x\wedge (y\wedge y)=x\wedge y$;

(QL5) $x\vee x=x\wedge x$.
\end{definition}

A q-lattice $\mathbf{L}=\langle L; \vee,\wedge \rangle$ is called $\emph{distributive}$, if it satisfies $ x\vee (y\wedge z)=(x\vee y)\wedge (x\vee z)$ and $ x\wedge (y\vee z)=(x\wedge y)\vee (x\wedge z)$ for any $x,y,z\in L$.

\begin{definition}\label{Definition 3}\cite{Engineering}
An algebra $\mathbf{Q} = \langle Q ; \vee ,\wedge ,^\ast, 0, 1 \rangle$ of type $\langle 2,2,1,0,0\rangle $ is called a \emph{quasi-Boolean algebra} $\mathrm{(}$QB-algebra, for short$\mathrm{)}$, if it satisfies the following conditions for any $x\in Q$,

(QB1) $\langle Q;\vee,\wedge\rangle $ is a distributive q-lattice;

(QB2) $x\vee1=1$ \,and\, $x \wedge0=0$;

(QB3) $x\vee x^{*}=1$ \,and\, $x\wedge x^{*}=0$;

(QB4) $(x\wedge x)^{*}=x^{*}\vee x^{*}$;

(QB5) $x^{**}= x$.
\end{definition}

We abbreviate a QB-algebra $\mathbf{Q} = \langle Q ; \vee ,\wedge ,^\ast, 0, 1 \rangle$ as $\mathbf{Q}$ and denote the variety of all QB-algebras by $\mathbb{QB}$.

\begin{example}\label{Example 4}\cite{QB}
The algebra $\mathbf{4} = \langle \{0,a,b,1\} ; \vee ,\wedge ,^\ast, 0, 1 \rangle$ is a QB-algebra where the operations are given by the following tables\\
$$
\begin{array}{c|cccccc} \vee & 0\; & a\; & b\; & 1\;  \\ \hline 0 & 0 & 0 & 1 & 1 \\ a & 0 & 0 & 1 & 1 \\ b& 1 & 1 & 1 & 1 \\ 1 & 1 & 1 & 1 & 1 \end{array}\qquad
\begin{array}{c|cccccc} \wedge & 0\; & a\; & b\; & 1\;  \\ \hline 0 & 0 & 0 & 0 & 0 \\ a & 0 & 0 & 0 & 0 \\ b& 0 & 0 & 1 & 1 \\ 1 & 0 & 0 & 1 & 1 \end{array}\qquad
\begin{array}{c|cccccc} x & x^{*}   \\ \hline 0 & 1 \\ a & b \\ b& a \\ 1 & 0 \end{array}
$$
\end{example}

\begin{example}\label{Example 4-}\cite{QB}
The algebra $\bar{\mathbf{4}} = \langle \{0,a,b,1\} ; \vee ,\wedge ,^\ast, 0, 1 \rangle$ is a QB-algebra where the operations are given by the following tables\\
$$
\begin{array}{c|cccc} \vee & 0\; & a\; & b\; & 1  \\ \hline 0 & 0 & 1 & 0 & 1 \\ a & 1 & 1 & 1 & 1 \\ b& 0 & 1 & 0& 1 \\ 1 & 1 & 1 & 1 & 1 \end{array}\qquad
\begin{array}{c|cccc} \wedge & 0\; & a\; & b\; & 1  \\ \hline 0 & 0 & 0 & 0 & 0 \\ a & 0 & 1 & 0 & 1 \\ b& 0 & 0 & 0 & 0 \\ 1 & 0 & 1 & 0 & 1 \end{array}\qquad
\begin{array}{c|cccc} x & x^{*}   \\ \hline 0 & 1 \\ a & b \\ b& a \\ 1 & 0 \end{array}
$$
\end{example}

\begin{example}\label{Example A}
The algebra $\mathbf{A} = \langle \{0,a,e,f,b,1\} ; \vee ,\wedge ,^\ast, 0, 1 \rangle$ is a QB-algebra where the operations are given by the following tables
$$
\begin{array}{c|cccccc}
\vee & 0   & a   & e   & f   & b   & 1   \\ \hline 0   & 0   & a   & a   & f   & f   & 1   \\a   & a   & a   & a   & 1   & 1   & 1   \\e   & a   & a   & a   & 1   & 1   & 1   \\f   & f   & 1   & 1   & f   & f   & 1   \\b   & f   & 1   & 1   & f   & f   & 1   \\1   & 1   & 1   & 1   & 1   & 1   & 1   \\
\end{array}
\qquad
\begin{array}{c|cccccc}
\wedge & 0   & a   & e   & f   & b   & 1   \\ \hline0   & 0   & 0   & 0   & 0   & 0   & 0   \\a   & 0   & a   & a   & 0   & 0   & a   \\e   & 0   & a   & a   & 0   & 0   & a   \\f   & 0   & 0   & 0   & f   & f   & f   \\b   & 0   & 0   & 0   & f   & f   & f   \\1   & 0   & a   & a   & f   & f   & 1   \\
\end{array}
\qquad
\begin{array}{c|c}
x & x^{*} \\ \hline 0 & 1   \\a & f   \\e & b   \\f & a   \\b & e   \\1 & 0   \\
\end{array}
$$
\end{example}

A QB-algebra $\mathbf{F}$ is called $\emph{flat}$, if it satisfies the equation $1=0$. We denote the variety of all flat QB-algebras by $\mathbb{FQB}$.

\begin{example}\label{Example F3}\cite{QB}
The algebra $\mathbf{F}_{3} = \langle \{0,c,d\} ; \vee ,\wedge ,^\ast, 0, 1 \rangle$ is a flat QB-algebra where the operations are given by the  following tables\\
\begin{eqnarray*}
\begin{array}{c|ccccccccc} \vee & 0\; & c& d \\ \hline 0 & 0 & 0& 0  \\ c & 0 & 0& 0 \\ d & 0 & 0& 0  \\
\end{array}\qquad
\begin{array}{c|ccccccccc} \wedge & 0\; & c & d\\ \hline 0 & 0 & 0& 0   \\ c & 0 & 0& 0 \\ d & 0 & 0& 0  \\
\end{array}\qquad
\begin{array}{c|ccccccccc} x & x^{*}   \\ \hline 0 & 0 \\ c & d\\ d & c \\  \end{array} \;\;\mathrm{and}\; 1=0.
\end{eqnarray*}
\end{example}

\begin{example}\label{Example F5}
The algebra $\mathbf{F}_{5} = \langle \{0,g,h,i,j\} ; \vee ,\wedge ,^\ast, 0, 1 \rangle$ is a flat QB-algebra where the operations are given by the  following tables\\
\begin{eqnarray*}
\begin{array}{c|ccccccccc} \vee & 0\; & g& h& i& j \\ \hline 0 & 0 & 0& 0 &0 & 0 \\ g & 0 & 0& 0 & 0& 0 \\ h & 0 & 0& 0 &0 &0  \\ i & 0 & 0& 0 & 0& 0 \\ j & 0 & 0& 0 & 0& 0 \\
\end{array}\qquad
\begin{array}{c|ccccccccc} \wedge & 0\; & g& h& i& j \\ \hline 0 & 0 & 0& 0 &0 & 0 \\ g & 0 & 0& 0 & 0& 0 \\ h & 0 & 0& 0 &0 &0  \\ i & 0 & 0& 0 & 0& 0 \\ j & 0 & 0& 0 & 0& 0 \\
\end{array}\qquad
\begin{array}{c|ccccccccc} x & x^{*}   \\ \hline 0 & 0 \\ g & i\\ h & j \\i & g \\j & h \\  \end{array} \;\;\mathrm{and}\; 1=0.
\end{eqnarray*}
\end{example}

Let $\mathbf{Q}$ be a QB-algebra. We say that an element $x\in Q$ is $\emph{regular}$, if $x\vee x=x$ (or $x\wedge x=x$). And the set of all regular elements in $\mathbf{Q}$ is denoted by $\mathscr{R}(Q)$. Then $\mathscr{R}(\mathbf{Q})=\langle \mathscr{R}(Q); \vee ,\wedge ,^\ast, 0, 1 \rangle$ is a Boolean algebra where the operations $\vee,\wedge$, and $^\ast$ are those operations of $\mathbf{Q}$ restricted to $\mathscr{R}(Q)$. If $x\in Q\backslash\mathscr{R}(Q)$, then $\emph{x}$ is called $\emph{irregular}$ and the set of all irregular elements in $\mathbf{Q}$ is denoted by $\mathscr{IR}(Q)$. In addition, we define a binary relation $x\leq y$ iff $x \vee y=y\vee y$, or equivalently, $x \wedge y = x \wedge x$ for any $x,y\in Q$. Then the relation $\leq$ is quasi-ordering \cite{Engineering}. Moreover, for any $x\in Q$, we have $0\leq x\leq 1$ and $x\wedge x \leq x \leq x \vee x$. If $x\leq y$ and $y\leq x$, then $x\wedge y=x\wedge x$ and $y\wedge x=y\wedge y$, so $x\wedge x=y\wedge y$. Especially, if $x,y\in \mathscr{R}(Q)$ with $x\leq y$ and $y\leq x$, then $x=y$.

\begin{lemma}\label{Lemma 1}\emph{\cite{QB}}
Let $\mathbf{Q}$  be a QB-algebra and $x,y,u,v\in Q$. Then

$\mathrm{(1)}$ $\mathrm{if}$ $x\leq y$ $\mathrm{and}$ $u\leq v$, $\mathrm{then}$ $x\wedge u\leq y\wedge v$ $\mathrm{and}$ $\, x\vee u\leq y\vee v$;

$\mathrm{(2)}$ $1^{\ast}=0$ $\mathrm{and}$ $0^{\ast}=1$;

$\mathrm{(3)}$ $\mathrm{if}$ $x \leq y$, $\mathrm{then}$ $y^{\ast}\leq  x^{\ast}$.
\end{lemma}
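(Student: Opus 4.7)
My plan is to handle the three parts in sequence, using the axioms (QL1)--(QL5), (QB2)--(QB5), distributivity, and the fact recorded just before the lemma that $\mathscr{R}(\mathbf{Q})$ is a Boolean algebra. For (1), I would simply unfold the definition of $\leq$: given $x\vee y=y\vee y$ and $u\vee v=v\vee v$, commutativity and associativity (QL1), (QL2) yield
\[
(x\vee u)\vee(y\vee v)=(x\vee y)\vee(u\vee v)=(y\vee y)\vee(v\vee v)=(y\vee v)\vee(y\vee v),
\]
so $x\vee u\leq y\vee v$; the meet case is symmetric via the equivalent form $a\leq b\iff a\wedge b=a\wedge a$.

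For (2), the first step is to check that $1,0\in\mathscr{R}(Q)$: (QB2) forces $1\vee 1=1$ and $0\wedge 0=0$, and then (QL5) gives $1\wedge 1=1$ and $0\vee 0=0$. Applying (QB4) to $1\wedge 1=1$ yields $1^{*}=1^{*}\vee 1^{*}$, so $1^{*}\in\mathscr{R}(Q)$, and the analogous computation shows $0^{*}\in\mathscr{R}(Q)$. Now (QB3) gives $1\vee 1^{*}=1$ and $1\wedge 1^{*}=0$, so inside the Boolean algebra $\mathscr{R}(\mathbf{Q})$ both $0$ and $1^{*}$ are complements of $1$; uniqueness of complement forces $1^{*}=0$, and dually $0^{*}=1$.

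For (3), the critical tool is a De Morgan identity $(x\wedge y)^{*}=x^{*}\vee y^{*}$, which I expect to be the main obstacle since it is not immediate from the axioms. My plan is a three-step derivation: (i) show that every element of the form $a\vee b$ or $a\wedge b$ is regular, for instance by the computation $(a\vee b)\vee(a\vee b)=(a\vee a)\vee(b\vee b)=(b\vee b)\vee a=a\vee b$ using (QL1), (QL2), (QL4); (ii) combine distributivity with (QB2), (QB3) and the regularity from (i) to obtain $(x\wedge y)\vee(x^{*}\vee y^{*})=1$ and $(x\wedge y)\wedge(x^{*}\vee y^{*})=0$; (iii) since $x\wedge y$, $x^{*}\vee y^{*}$, and $(x\wedge y)^{*}$ (regular via (QB4) applied to the regular element $x\wedge y$) all lie in $\mathscr{R}(Q)$, uniqueness of complement in $\mathscr{R}(\mathbf{Q})$ forces $(x\wedge y)^{*}=x^{*}\vee y^{*}$. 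Finally, from $x\leq y$ I have $x\wedge y=x\wedge x$; applying $^{*}$ and using this De Morgan identity together with (QB4) gives $x^{*}\vee y^{*}=x^{*}\vee x^{*}$, which is exactly $y^{*}\leq x^{*}$.
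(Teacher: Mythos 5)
Your proof is correct, and since the paper only cites this lemma from the reference on spectra of quasi-Boolean algebras without reproducing an argument, there is nothing in the text to compare it against line by line. All three parts check out: in (1) the join computation and its meet analogue via the equivalent form $a\leq b\iff a\wedge b=a\wedge a$ are exactly right; in (2) the verification that $0,1,0^{*},1^{*}\in\mathscr{R}(Q)$ is correct and uniqueness of complements in the bounded distributive lattice $\mathscr{R}(\mathbf{Q})$ does the rest; and in (3) your three-step derivation of $(x\wedge y)^{*}=x^{*}\vee y^{*}$ is sound — step (i) follows from (QL1), (QL2), (QL4), step (ii) from distributivity with (QB2), (QB3), and step (iii) again from uniqueness of complements among regular elements. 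The only structural caveat is that your argument leans twice on the imported fact that $\mathscr{R}(\mathbf{Q})$ is a Boolean algebra, which in the source is established alongside this very lemma, so a purist might worry about circularity; this is easily avoided, e.g.\ part (2) follows directly from the axioms via $0\leq 0^{*}$, which gives $1=0\vee 0^{*}=0^{*}\vee 0^{*}=(0\wedge 0)^{*}=0^{*}$ and then $1^{*}=0^{**}=0$ by (QB5). Within the logical organization of the present paper, however, where the Boolean structure of $\mathscr{R}(\mathbf{Q})$ is stated before the lemma, your argument is complete as written.
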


Let $\mathbf{Q}$  be a QB-algebra. For any $x,y\in Q$, authors defined two relations in \cite{QB} as follows
\begin{align}
&\chi=\{\langle x,y\rangle \in Q^{2} \mid x\leq y\; \mathrm{and}\; y\leq x\};\cr
&\tau=\{\langle x,y\rangle \in Q^{2} \mid x=y \;\mathrm{or}\; x,y\in\mathscr{R}(Q) \}. \nonumber
\end{align}

\begin{lemma}\label{Lemma 2}\emph{\cite{QB}}
Let $\mathbf{Q}$ be a QB-algebra and $x,y\in Q$. Then

$\mathrm{(1)}$ $\chi$ and $\tau$ are congruences on $\mathbf{Q}$;

$\mathrm{(2)}$ $\langle x,y \rangle \in \chi \;\mathrm{iff}\; x\vee x=y\vee y$;

$\mathrm{(3)}$ the quotient algebra $\mathbf{Q}/ \chi= \langle Q/ \chi; \vee^{\mathbf{Q}/ \chi},\wedge^{\mathbf{Q}/ \chi}, ^{\ast{\mathbf{Q}/ \chi}}, 0/\chi, 1/\chi  \rangle $ is a Boolean algebra where $Q/ \chi=\{x/ \chi\,|\,x\in Q\}$;

$\mathrm{(4)}$ the quotient algebra $\mathbf{Q}/ \tau= \langle Q/ \tau; \vee^{\mathbf{Q}/ \tau},\wedge^{\mathbf{Q}/ \tau}, ^{\ast{\mathbf{Q}/ \tau}}, 0/\tau, 1/\tau  \rangle $ is a flat QB-algebra where $ Q/ \tau=\{x/ \tau\,|\,x\in Q\}$.

\end{lemma}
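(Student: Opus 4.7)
The plan is to handle the four claims in sequence, leaning on Lemma~\ref{Lemma 1} and the $q$-lattice axioms (QL1)--(QL5) as the main tools.

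For~(1), the equivalence relation conditions for $\chi$ are routine: reflexivity follows from $x\leq x$, symmetry from the definition, and transitivity from that of the quasi-order $\leq$. Compatibility of $\chi$ with $\vee$ and $\wedge$ is precisely Lemma~\ref{Lemma 1}(1), and compatibility with $^{\ast}$ is precisely Lemma~\ref{Lemma 1}(3). For $\tau$, the equivalence conditions are handled by a short case analysis. Compatibility requires a preliminary observation: whenever $u\in\mathscr{R}(Q)$, both $x\vee u$ and $x\wedge u$ are again regular for every $x\in Q$. This follows from the computation
\[
(x\vee u)\vee(x\vee u)=(x\vee x)\vee(u\vee u)=(x\vee x)\vee u=x\vee u,
\]
where the last equality is (QL4) after a commutation, and dually for $\wedge$. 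Regularity is also preserved by $^{\ast}$ via (QB4) and (QL5), since if $x\vee x=x$ then $(x\wedge x)^{\ast}=x^{\ast}\vee x^{\ast}$ forces $x^{\ast}=x^{\ast}\vee x^{\ast}$. With these preservation statements in hand, compatibility of $\tau$ reduces to a four-way case analysis on the hypotheses $\langle x,y\rangle,\langle u,v\rangle\in\tau$, according to whether each pair is a literal equality or merely lies jointly in $\mathscr{R}(Q)$.

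For~(2), the forward direction follows from $x\vee y=y\vee y$, $y\vee x=x\vee x$ and commutativity of $\vee$. For the converse, assuming $x\vee x=y\vee y$, I would compute
\[
x\vee y\;=\;x\vee(y\vee y)\;=\;x\vee(x\vee x)\;=\;x\vee x\;=\;y\vee y
\]
by repeated use of (QL4), yielding $x\leq y$, and symmetrically $y\leq x$. For~(3), part~(2) shows that every $\chi$-class $x/\chi$ coincides with $(x\vee x)/\chi$, and $x\vee x$ is regular by the preservation computation above. Thus each class contains a unique regular representative, and the assignment $x/\chi\mapsto x\vee x$ is a well-defined bijection onto $\mathscr{R}(Q)$ respecting the operations (using (QL5) together with (QB4) to handle $^{\ast}$), giving an isomorphism between $\mathbf{Q}/\chi$ and the Boolean algebra $\mathscr{R}(\mathbf{Q})$. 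For~(4), once $\tau$ is a congruence, $\mathbf{Q}/\tau$ is automatically a QB-algebra; and since $0,1\in\mathscr{R}(Q)$ we have $\langle 0,1\rangle\in\tau$, so $0/\tau=1/\tau$ and $\mathbf{Q}/\tau$ is flat.

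The main obstacle I anticipate is the compatibility of $\tau$ in~(1), and within that, the preservation lemma stating that joins, meets, and complements involving regular elements remain regular; once this is confirmed, the remaining steps are short algebraic manipulations reading off directly from the axioms and from Lemma~\ref{Lemma 1}.
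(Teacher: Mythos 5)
The paper does not prove this lemma --- it is quoted verbatim from \cite{QB} --- so there is no in-paper argument to compare against; judged on its own, your proof is correct and complete relative to the paper's stated preliminaries (in particular the fact that $\mathscr{R}(\mathbf{Q})$ is a Boolean algebra). All the key steps check out: the $\chi$-compatibility really is just Lemma~\ref{Lemma 1}(1) and (3); your computation $(x\vee u)\vee(x\vee u)=(x\vee x)\vee(u\vee u)=x\vee u$ is valid via (QL1), (QL2), and two applications of (QL4), and in fact shows that \emph{every} join and meet is regular, which collapses the $\tau$-compatibility case analysis even further than you indicate; the converse in~(2) via (QL4) is right; and the map $x/\chi\mapsto x\vee x$ is indeed a well-defined isomorphism onto $\mathscr{R}(\mathbf{Q})$ by~(2). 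The only point worth flagging is that part~(3) as you argue it leans on the unproved preliminary assertion that $\mathscr{R}(\mathbf{Q})$ is a Boolean algebra; a fully self-contained proof would either verify that directly or check the Boolean axioms in the quotient, but within this paper's framework your reduction is legitimate.
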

\begin{remark}
The set $x/\chi=\{y\in Q \,|\,\langle x,y\rangle\in \chi\}$ is called a $cloud$ in $\mathbf{Q}$.
\end{remark}

Let $\mathbf{Q}_{1}$ and $\mathbf{Q}_{2}$ be QB-algebras. A $\emph{QB-homomorphism}$ is a mapping $f: Q_{1}\rightarrow Q _{2}$ such that $f(x\vee y)=f(x) \vee f(y)$, $f(x\wedge y)=f(x) \wedge f(y)$, $f(x^{\ast})=(f(x))^{\ast}$, and $f(0)=0$ for any $x,y\in Q_{1}$. A $\emph{QB-embedding}$ is an injective QB-homomorphism, a $\emph{QB-epimorphism}$ is a surjective QB-homomorphism, and a $\emph{QB-isomorphism}$ is a bijective QB-homomorphism. If there exists a QB-isomorphism between QB-algebras $\mathbf{Q}_{1}$ and $\mathbf{Q}_{2}$, then we call that $\mathbf{Q}_{1}$ is \emph{isomorphic} to $\mathbf{Q}_{2}$.

\begin{remark}
Let $\mathbf{4}$ be the QB-algebra defined in Example $\ref{Example 4}$ and $\bar{\mathbf{4}}$ be the QB-algebra defined in Example $\ref{Example 4-}$. Then $\mathbf{4}$ is isomorphic to $\bar{\mathbf{4}}$.
\end{remark}

\begin{theorem}\label{Theorem 1}\emph{\cite{QB}}
Let $\mathbf{Q}$ be a QB-algebra. Then there exist a Boolean algebra $\mathbf{Q}/ \chi$ and a flat QB-algebra $\mathbf{Q}/ \tau$ such that $\mathbf{Q}$ can be embedded into the direct product $\mathbf{Q}/ \chi \times \mathbf{Q}/ \tau$.
\end{theorem}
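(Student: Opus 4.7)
The plan is to exhibit the natural evaluation map
$$f\colon Q \longrightarrow (Q/\chi)\times (Q/\tau), \qquad f(x)=\langle x/\chi,\, x/\tau\rangle,$$
and show that it is a QB-embedding. The target carries the coordinate-wise QB-structure (which is legitimate because $\mathbb{QB}$ is a variety, hence closed under direct products), and by Lemma \ref{Lemma 2}(3)--(4) the first factor is a Boolean algebra while the second is a flat QB-algebra, so any such embedding will give precisely the statement claimed.

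For the homomorphism part, since $\chi$ and $\tau$ are congruences on $\mathbf{Q}$ by Lemma \ref{Lemma 2}(1), the canonical quotient maps $\pi_\chi\colon \mathbf{Q}\to \mathbf{Q}/\chi$ and $\pi_\tau\colon \mathbf{Q}\to \mathbf{Q}/\tau$ are QB-homomorphisms. The map $f$ is simply the pairing $\langle \pi_\chi,\pi_\tau\rangle$ into the direct product, which is therefore also a QB-homomorphism; this step is essentially formal and requires no calculation beyond invoking Lemma \ref{Lemma 2}(1).

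The substantive step is injectivity. Suppose $f(x)=f(y)$; then $\langle x,y\rangle \in \chi\cap \tau$. By the definition of $\tau$ recalled in the preliminaries, this forces either $x=y$ (in which case we are finished) or $x,y\in \mathscr{R}(Q)$. In the second case, $\langle x,y\rangle \in \chi$ says $x\leq y$ and $y\leq x$, and the remark immediately preceding Lemma \ref{Lemma 2} notes that for regular elements this mutual quasi-order implies genuine equality $x=y$. Hence $\ker f$ is the identity relation and $f$ is a QB-embedding, as required. I do not anticipate any real obstacle: the content of the theorem has been compressed into the complementary nature of $\chi$ and $\tau$, and the proof is a standard subdirect-product argument once $\chi\cap\tau$ is identified with equality via the regularity remark.
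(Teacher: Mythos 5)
Your proof is correct and is the standard subdirect-decomposition argument that this theorem (cited here from [QB] without a reproduced proof) is based on: the pairing $\langle\pi_\chi,\pi_\tau\rangle$ is a QB-homomorphism because $\chi$ and $\tau$ are congruences, and injectivity reduces to $\chi\cap\tau=\Delta$, which you verify correctly using the definition of $\tau$ together with the fact that mutually comparable regular elements coincide. Nothing is missing.
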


\begin{example}\label{Example 3}
Let $\mathbf{4}$ be the QB-algebra defined in Example $\ref{Example 4}$. Then $\mathbf{4}/ \chi$ is the Boolean algebra of two elements, which is isomorphic to $\mathbf{2}$ and $\mathbf{4}/\tau$ is the flat QB-algebra, which is isomorphic to $\mathbf{F}_{3}$ defined in Example $\ref{Example F3}$. Hence $\mathbf{4}$ can be embedded into the direct product $\mathbf{2}\times \mathbf{F}_{3}$.
\end{example}

\section{Finite quasi-Boolean algebras}

In this section, we discuss the number of finite quasi-Boolean algebras. For the sake of clarity, we discuss two cases: flat QB-algebras and non-flat QB-algebras.

\begin{lemma}\label{Lemma Cloud}
Let $\mathbf{Q}$ be a QB-algebra. Then any cloud in $\mathbf{Q}$ contains one and only one regular element.
\end{lemma}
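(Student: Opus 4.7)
The plan is to prove the two conjuncts separately: every cloud $x/\chi$ contains at least one regular element, and it contains at most one.

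For existence, given a cloud $x/\chi$, I would propose the element $x\vee x$ as a regular representative. To see that $x\vee x$ is regular, I verify $(x\vee x)\vee(x\vee x)=x\vee x$ by combining associativity (QL2) with the absorption-style identity QL4 ($x\vee(y\vee y)=x\vee y$): applying QL4 with $y=x$ gives $x\vee(x\vee x)=x\vee x$, and then associativity reduces $(x\vee x)\vee(x\vee x)$ to $x\vee(x\vee(x\vee x))=x\vee(x\vee x)=x\vee x$. To see that $x\vee x$ lies in the cloud $x/\chi$, I invoke Lemma \ref{Lemma 2}(2), which says $\langle u,v\rangle\in\chi$ iff $u\vee u=v\vee v$; here $u=x$ and $v=x\vee x$, and the required equality $x\vee x=(x\vee x)\vee(x\vee x)$ is exactly what was just established.

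For uniqueness, suppose $r_1,r_2\in x/\chi$ are both regular. Since $\chi$ is an equivalence relation on $\mathbf{Q}$ (it is even a congruence by Lemma \ref{Lemma 2}(1)), we have $\langle r_1,r_2\rangle\in\chi$, i.e.\ $r_1\leq r_2$ and $r_2\leq r_1$. The preliminary material already records the useful observation that, for regular elements, antisymmetry of $\leq$ holds: if $r_1,r_2\in\mathscr{R}(Q)$ with $r_1\leq r_2\leq r_1$, then $r_1=r_2$. Applying this gives $r_1=r_2$ and concludes the uniqueness part.

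No step looks like a genuine obstacle; the only care required is the short computation in QL2 and QL4 showing that $x\vee x$ is idempotent, since the axioms of a $q$-lattice do not give this for free from a single-letter identity. Everything else is a direct appeal to Lemma \ref{Lemma 2}(2) and to the antisymmetry remark about regular elements in the preliminary.
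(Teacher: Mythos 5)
Your proof is correct and follows essentially the same route as the paper's: both identify $x\vee x$ as the unique regular element of the cloud $x/\chi$, with membership and uniqueness coming from Lemma \ref{Lemma 2}(2). The only cosmetic differences are that you verify the idempotence of $x\vee x$ explicitly from (QL2) and (QL4) (the paper takes it for granted), and you derive uniqueness from antisymmetry of $\leq$ on regular elements rather than from the direct computation $y=y\vee y=x\vee x$.
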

\begin{proof}
For any $x\in Q$, consider the cloud $x/\chi$, since $x\leq x\vee x$ and $x\vee x=x\wedge x\leq x$, we have $\langle x ,x \vee x\rangle \in \chi$ and then $x \vee x \in x/ \chi$. Hence for any cloud $x/ \chi$, it contains the regular element $x\vee x$. Now, suppose that $\emph{y}$ is a regular element in $x/ \chi$. Then we have $x\vee x=y\vee y$ by Lemma \ref{Lemma 2}(2). Since $\emph{y}$ is regular, we have $y=y\vee y=x\vee x$. Thus the cloud $x/ \chi$ contains one and only one regular element.
\end{proof}

\begin{remark}
Let $x/\chi$ be a cloud in $\mathbf{Q}$ and $a\in x/\chi$ be the regular element in $\mathbf{Q}$. Then $x/\chi=a/\chi$ and we denote $cl(a)=x/\chi$. In the following, the notation  $cl(x)$ means $x/\chi$ where $\emph{x}$ is a regular element in $\mathbf{Q}$.
\end{remark}

\begin{lemma}\label{Lemma number}
Let $\mathbf{Q}$ be a QB-algebra and $x,y\in Q$. Then

\emph{(1)} if there are $y,y^{\ast}\in Q$ such that $y,y^{\ast} \in cl(x)$, then $x=x^{\ast}$;

\emph{(2)} if $y\in cl(x)$, then $y^{\ast}\in cl(x^{\ast})$;

\emph{(3)} if $x=x^{\ast}$ and $y\in cl(x)$, then $y^{\ast}\in cl(x)$.
\end{lemma}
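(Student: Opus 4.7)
The overall strategy is to exploit three ingredients already established: the characterization $\langle y,z\rangle\in\chi \iff y\vee y=z\vee z$ from Lemma \ref{Lemma 2}(2), the fact that $\chi$ is a congruence on $\mathbf{Q}$ (so the unary $^{*}$ respects $\chi$), and axiom (QB4), which ties $y^{*}\vee y^{*}$ to $(y\wedge y)^{*}$. Throughout I use the convention just fixed in the preceding remark: writing $cl(x)$ presupposes that $x$ is regular, so $x\vee x=x=x\wedge x$.

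For part (1), I assume $y,y^{*}\in cl(x)$ with $x$ regular. Lemma \ref{Lemma 2}(2) then gives $y\vee y=x\vee x=x$ and $y^{*}\vee y^{*}=x\vee x=x$. The key computation is to rewrite the second equality via (QB4) and (QL5):
\[
x \;=\; y^{*}\vee y^{*} \;\stackrel{(\mathrm{QB4})}{=}\; (y\wedge y)^{*} \;\stackrel{(\mathrm{QL5})}{=}\; (y\vee y)^{*} \;=\; x^{*},
\]
which is exactly the conclusion $x=x^{*}$.

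For part (2), I start from $y\in cl(x)$, i.e., $\langle y,x\rangle\in\chi$. Since $\chi$ is a congruence (Lemma \ref{Lemma 2}(1)) and $^{*}$ is a fundamental operation of $\mathbf{Q}$, we immediately get $\langle y^{*},x^{*}\rangle\in\chi$. To phrase this as $y^{*}\in cl(x^{*})$ one must check that $x^{*}$ is itself regular, so that the notation $cl(x^{*})$ is legitimate; this is a short application of (QB4) using regularity of $x$, namely $x^{*}=(x\wedge x)^{*}=x^{*}\vee x^{*}$. Part (3) is then a one-line corollary: under the extra hypothesis $x=x^{*}$, part (2) yields $y^{*}\in cl(x^{*})=cl(x)$.

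The technical content is modest: the only genuinely delicate step is the chain in part (1), where one must combine (QL5) and (QB4) to convert $y^{*}\vee y^{*}$ into $(y\vee y)^{*}$. The other subtlety, easy to overlook, is verifying that $x^{*}$ is regular in part (2) before writing $cl(x^{*})$; without this check the statement would not even be well-typed.
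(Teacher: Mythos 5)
Your proof is correct and follows essentially the same route as the paper's: part (1) is the identical chain through (QB4) and (QL5), and part (3) is the same one-line reduction to part (2). The only (harmless) variation is in part (2), where you invoke the congruence property of $\chi$ from Lemma \ref{Lemma 2}(1) to pass from $\langle y,x\rangle\in\chi$ to $\langle y^{*},x^{*}\rangle\in\chi$, whereas the paper verifies $x^{*}\vee x^{*}=y^{*}\vee y^{*}$ by the direct computation $(x\wedge x)^{*}=(y\vee y)^{*}=y^{*}\wedge y^{*}$; your explicit check that $x^{*}$ is regular, so that $cl(x^{*})$ is well-formed, is a point of care the paper leaves implicit.
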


\begin{proof}
(1) If $y,y^{\ast} \in cl(x)$, then we have $x\vee x=y\vee y$ and $x\vee x=y^{\ast} \vee y^{\ast}$. Since $x=x\vee x$ and $y\vee y=y\wedge y$, we have $x=x\vee x=y^{\ast}\vee y^{\ast}=(y\wedge y)^{\ast}=(y\vee y)^{\ast}=(x\vee x)^{\ast}=x^{\ast}$ by (QB4).

(2) If $y\in cl(x)$, then we have $x\vee x=y\vee y$, it follows that $x^{\ast}\vee x^{\ast}=(x\wedge x)^{\ast}=(x\vee x)^{\ast}=(y\vee y)^{\ast}=y^{\ast} \wedge y^{\ast}=y^{\ast} \vee y^{\ast}$, so $y^{\ast}\in cl(x^{\ast})$.

(3) Since $x=x^{\ast}$, we have $y^{\ast}\in cl(x)$ by (2).
\end{proof}

Given a QB-algebra $\mathbf{Q}$ and a subset $\emph{X}$ of $\emph{Q}$, we denote $X^{\ast}=\{x^{\ast}\in Q\,|\,x\in X\}$. Especially, we have $(cl(x))^{\ast}=\{y^{\ast}\in Q\,|\,y\in cl(x)\}$. Below we show that the set $(cl(x))^{\ast}$ is the cloud $cl(x^{\ast})$.

\begin{lemma}\label{clx}
Let $\mathbf{Q}$ be a QB-algebra and $x\in \mathscr{R}(Q)$. Then $(cl(x))^{\ast}=cl(x^{\ast})$.
\end{lemma}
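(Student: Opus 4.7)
The plan is to prove the two inclusions separately, leveraging Lemma \ref{Lemma number}(2) together with the involutivity axiom (QB5). Before starting, I would quickly check that $cl(x^{\ast})$ is well-defined in the sense previously fixed (i.e.\ that $x^{\ast}$ is the regular representative of its own cloud): since $x$ is regular, $x\wedge x=x$, and (QB4) gives $x^{\ast}\vee x^{\ast}=(x\wedge x)^{\ast}=x^{\ast}$, so $x^{\ast}\in\mathscr{R}(Q)$ as required.

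For the inclusion $(cl(x))^{\ast}\subseteq cl(x^{\ast})$, pick any element of the left-hand side, necessarily of the form $y^{\ast}$ with $y\in cl(x)$. By Lemma \ref{Lemma number}(2), $y^{\ast}\in cl(x^{\ast})$, which is exactly what we need.

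For the reverse inclusion $cl(x^{\ast})\subseteq(cl(x))^{\ast}$, take $z\in cl(x^{\ast})$. Applying Lemma \ref{Lemma number}(2) to the regular element $x^{\ast}$ (with $z$ playing the role of the cloud member), we get $z^{\ast}\in cl((x^{\ast})^{\ast})=cl(x)$ by (QB5). Then $z=z^{\ast\ast}=(z^{\ast})^{\ast}$ exhibits $z$ as the image under ${}^{\ast}$ of an element of $cl(x)$, so $z\in (cl(x))^{\ast}$.

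Combining both inclusions yields the equality. There is essentially no obstacle here: the whole argument is a symmetric application of Lemma \ref{Lemma number}(2) and (QB5), and the only mildly delicate point is making sure we are entitled to write $cl(x^{\ast})$ with $x^{\ast}$ as regular representative, which is a one-line consequence of (QB4).
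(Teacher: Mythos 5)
Your proof is correct and follows essentially the same route as the paper: the first inclusion is Lemma \ref{Lemma number}(2) verbatim, and for the reverse inclusion the paper simply redoes by hand the computation that amounts to applying Lemma \ref{Lemma number}(2) to $x^{\ast}$, which is exactly what you do (your preliminary check that $x^{\ast}\in\mathscr{R}(Q)$ is a nice touch the paper leaves implicit).
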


\begin{proof}
By Lemma \ref{Lemma number}(2), we have $(cl(x))^{\ast}\subseteq cl(x^{\ast})$. On the other hand, for any $y\in cl(x^{\ast})$, we have $y\vee y=x^{\ast} \vee x^{\ast}$, it turns out that $y^{\ast}\vee y^{\ast}=(y\wedge y)^{\ast}=(y\vee y)^{\ast}=(x^{\ast}\vee x^{\ast})^{\ast}=x\wedge x=x\vee x$, so $y^{\ast}\in cl(x)$. Since $y=y^{\ast\ast}\in (cl(x))^{\ast}$, we have  $cl(x^{\ast})\subseteq (cl(x))^{\ast}$. Thus $(cl(x))^{\ast}=cl(x^{\ast})$.
\end{proof}

\begin{lemma}\label{bijection}
Let $\mathbf{Q}$ be a QB-algebra. For any $x\in \mathscr{R}(Q)$, we define a mapping $f: cl(x) \rightarrow cl(x^{\ast})$ by $f(y)=y^{\ast}$ for any $y\in cl(x)$. Then f is a bijection.
\end{lemma}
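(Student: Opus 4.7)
The plan is to verify well-definedness, injectivity, and surjectivity of $f$, each of which reduces to a short computation using (QB5) and Lemma \ref{clx}.

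First I would check that $f$ is well-defined: for any $y\in cl(x)$, Lemma \ref{Lemma number}(2) gives $y^{\ast}\in cl(x^{\ast})$, so the codomain $cl(x^{\ast})$ is correct. Alternatively, this follows directly from the inclusion $(cl(x))^{\ast}\subseteq cl(x^{\ast})$ already established in Lemma \ref{clx}.

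Next, I would prove injectivity by the involutive law. Suppose $f(y_{1})=f(y_{2})$ for $y_{1},y_{2}\in cl(x)$; then $y_{1}^{\ast}=y_{2}^{\ast}$, so applying $^{\ast}$ to both sides and using (QB5) yields $y_{1}=y_{1}^{\ast\ast}=y_{2}^{\ast\ast}=y_{2}$.

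For surjectivity, take any $z\in cl(x^{\ast})$. By Lemma \ref{clx} (or directly Lemma \ref{Lemma number}(2) applied to $x^{\ast}$, noting $x^{\ast\ast}=x$), we have $z^{\ast}\in cl(x^{\ast\ast})=cl(x)$, and $f(z^{\ast})=z^{\ast\ast}=z$ by (QB5). Hence every element of $cl(x^{\ast})$ lies in the image of $f$.

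There is no real obstacle here; the whole proof is a direct application of the involution (QB5) together with Lemma \ref{clx}, and it could in principle be phrased as: $^{\ast}$ restricts to mutually inverse maps $cl(x)\leftrightarrow cl(x^{\ast})$. I would write it out in the three steps above for clarity.
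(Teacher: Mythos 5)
Your proof is correct and follows essentially the same route as the paper: injectivity via the involution (QB5) and surjectivity via Lemma \ref{clx} (you exhibit the explicit preimage $z^{\ast}$, while the paper just reads it off from the set equality $(cl(x))^{\ast}=cl(x^{\ast})$). The added well-definedness check is a harmless refinement the paper leaves implicit.
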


\begin{proof}
For any $z \in cl(x^{\ast})$, since $cl(x^{\ast})=(cl(x))^{\ast}$ by Lemma \ref{clx}, we have $z=y^{\ast}$ for some $y\in cl(x)$, so $\emph{f}$ is surjective. Suppose that $f(y_{1})=f(y_{2})$ for any $y_{1},y_{2}\in cl(x)$. Then we have $y_{1}^{\ast}=y_{2}^{\ast}$, which implies that $y_{1}=y_{1}^{\ast\ast}=y_{2}^{\ast\ast}=y_{2}$ by (QB5), so $\emph{f}$ is injective. Hence $\emph{f}$ is a bijection.
\end{proof}

\begin{lemma}\label{neq}
Let $\mathbf{Q}$ be a non-flat QB-algebra. Then $x^{\ast}\neq x$ for any $x\in Q$.
\end{lemma}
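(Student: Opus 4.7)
The plan is to prove the contrapositive by direct contradiction: assume some element coincides with its complement and force the algebra to be flat, i.e., $1=0$. So I would begin by supposing there exists $x\in Q$ with $x^{\ast}=x$.

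Next I would apply the two identities from (QB3), namely $x\vee x^{\ast}=1$ and $x\wedge x^{\ast}=0$. Substituting $x^{\ast}=x$ into both gives $x\vee x=1$ and $x\wedge x=0$. The key observation is then (QL5), which asserts that $x\vee x = x\wedge x$ in any q-lattice. Combining these three equalities yields
\begin{equation*}
1 \;=\; x\vee x \;=\; x\wedge x \;=\; 0,
\end{equation*}
so $\mathbf{Q}$ is flat, contradicting the hypothesis that $\mathbf{Q}$ is non-flat. Therefore no such $x$ exists and $x^{\ast}\neq x$ for every $x\in Q$.

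There is no real obstacle here; the argument is a one-line consequence of (QB3) together with the distinguishing axiom (QL5) of q-lattices, which forces the idempotency-like equation $x\vee x = x\wedge x$. The only thing to be careful about is to cite (QL5) rather than conflate it with ordinary idempotence, since in a genuine q-lattice $x\vee x$ need not equal $x$; what matters is merely that $x\vee x$ and $x\wedge x$ are the same element, which is enough to collapse $1$ and $0$.
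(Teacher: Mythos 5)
Your argument is correct and is essentially identical to the paper's proof, which also assumes $x^{\ast}=x$ and derives $1=x\vee x^{\ast}=x\vee x=x\wedge x=x\wedge x^{\ast}=0$ via (QB3) and (QL5). Your explicit remark that (QL5) is the relevant axiom (rather than ordinary idempotence) is a nice clarification but does not change the substance.
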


\begin{proof}
Suppose that there exists an element $x\in Q$ such that $x^{\ast}=x$. Then $1=x\vee x^{\ast}=x\vee x=x\wedge x=x\wedge x^{\ast}=0$, this is a contradiction with the hypothesis. Thus $x^{\ast}\neq x$ for any $x\in Q$.
\end{proof}

\begin{corollary}\label{cor empty}
Let $\mathbf{Q}$ be a non-flat QB-algebra and $x\in \mathscr{R}(Q)$. Then $cl(x)\cap cl(x^{\ast})=\emptyset$.
\end{corollary}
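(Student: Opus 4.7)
The plan is to proceed by contradiction, using the fact that clouds are equivalence classes of the congruence $\chi$ (so any two of them either coincide or are disjoint), together with Lemma \ref{Lemma number}(1) and Lemma \ref{neq}.

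First, I would observe that since $x \in \mathscr{R}(Q)$ and $x^\ast \in \mathscr{R}(Q)$ (the set of regular elements is closed under $^\ast$, as $x\vee x = x$ gives $x^\ast \wedge x^\ast = (x\vee x)^\ast = x^\ast$, so $x^\ast$ is regular), both $cl(x)$ and $cl(x^\ast)$ are well-defined $\chi$-equivalence classes. Because $\chi$ is an equivalence relation (indeed a congruence, by Lemma \ref{Lemma 2}(1)), either $cl(x) \cap cl(x^\ast) = \emptyset$ or $cl(x) = cl(x^\ast)$.

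Next, I would suppose for contradiction that $cl(x) = cl(x^\ast)$. Then in particular $x^\ast \in cl(x)$, while trivially $x \in cl(x)$ as well. Setting $y = x$ in Lemma \ref{Lemma number}(1), the hypothesis $y, y^\ast \in cl(x)$ is satisfied, so the lemma yields $x = x^\ast$. But $\mathbf{Q}$ is non-flat, so Lemma \ref{neq} applies and gives $x^\ast \neq x$, a contradiction.

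Hence the only remaining alternative is $cl(x) \cap cl(x^\ast) = \emptyset$, which is the desired conclusion. No real obstacle is anticipated here; the argument is a short synthesis of the preceding lemmas, and the only point that requires a line of verification is the closure of $\mathscr{R}(Q)$ under $^\ast$, which follows immediately from (QB4) and (QL5).
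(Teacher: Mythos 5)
Your proof is correct and follows essentially the same route as the paper: both arguments reduce a nonempty intersection to the conclusion $x=x^\ast$ and then contradict Lemma \ref{neq}. The paper gets there slightly more directly (taking $y$ in the intersection and computing $x = y\vee y = x^\ast$ from regularity), whereas you route through the equivalence-class dichotomy and Lemma \ref{Lemma number}(1), but this is only a cosmetic difference.
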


\begin{proof}
Suppose that there exists an element $y\in cl(x)\cap cl(x^{\ast})$. Then $y\vee y=x\vee x=x$ and $y\vee y=x^{\ast}\vee x^{\ast}=x^{\ast}$, it turns out that $x=x^{\ast}$, this is a contradiction. Thus $cl(x)\cap cl(x^{\ast})=\emptyset$.
\end{proof}

\begin{remark}
Let $\mathbf{Q}$ be a non-flat QB-algebra. The result of Corollary \ref{cor empty} can be extended to broader contexts, i.e., $cl(x_{i})\cap cl(x_{j})=\emptyset$ for any $x_{i},x_{j}\in \mathscr{R}(Q)$ with $x_{i}\neq x_{j}$.
\end{remark}

\begin{proposition}\label{non flat number}
Let $\mathbf{Q}$ be a non-flat QB-algebra. Then

\emph{(1)} if $|\mathscr{R}(Q)|$ is finite, then $|\mathscr{R}(Q)|$ is even and $Q=cl(x_{1})\sqcup cl(x_{2})\sqcup \cdots \sqcup cl(x_{k})\sqcup cl(x_{1}^{\ast})\sqcup cl(x_{2}^{\ast})\sqcup \cdots \sqcup cl(x_{k}^{\ast})$ where $x_{i}\neq x_{j}$$(i\neq j)$, $x_{i}\neq x_{j}^{\ast}$$(i\neq j)$, and $\sqcup$ is the disjoint union;

\emph{(2)} if $|Q|$ is finite, then $|Q|$ is even.
\end{proposition}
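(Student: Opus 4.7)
The plan is to exploit two facts established earlier: by Lemma \ref{Lemma Cloud}, clouds are in one-to-one correspondence with regular elements (so $Q$ is the disjoint union of the clouds $cl(x)$ as $x$ runs over $\mathscr{R}(Q)$), and by Lemma \ref{neq}, the unary operation $^\ast$ has no fixed points on a non-flat QB-algebra. Combined with (QB5), this says $^\ast$ restricts to a fixed-point-free involution on $\mathscr{R}(Q)$, because $\mathscr{R}(Q)$ is closed under $^\ast$ (if $x=x\vee x$ then $x^\ast=(x\vee x)^\ast=x^\ast\wedge x^\ast=x^\ast\vee x^\ast$ by (QB4) and (QL5), so $x^\ast\in \mathscr{R}(Q)$).

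For part (1), since any fixed-point-free involution pairs up the elements of its domain, the regular elements can be listed as $x_{1},x_{1}^{\ast},x_{2},x_{2}^{\ast},\dots,x_{k},x_{k}^{\ast}$ with all these elements distinct; in particular $x_i\neq x_j$ and $x_i\neq x_j^\ast$ for $i\neq j$. This immediately forces $|\mathscr{R}(Q)|=2k$ to be even. The partition $Q=\bigsqcup_{i=1}^{k}\bigl(cl(x_i)\sqcup cl(x_i^\ast)\bigr)$ follows: every $y\in Q$ lies in the cloud of its unique regular representative $y\vee y$ (Lemma \ref{Lemma Cloud}), and distinct regular elements give disjoint clouds (the remark after Corollary \ref{cor empty}, whose proof is the same one-line argument: a common element $y$ would force $x_i=y\vee y=x_j$).

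For part (2), assuming $|Q|<\infty$, certainly $|\mathscr{R}(Q)|\le |Q|$ is finite, so part (1) applies and
\begin{equation*}
|Q|=\sum_{i=1}^{k}\bigl(|cl(x_i)|+|cl(x_i^\ast)|\bigr).
\end{equation*}
By Lemma \ref{bijection} the map $y\mapsto y^\ast$ is a bijection $cl(x_i)\to cl(x_i^\ast)$, so $|cl(x_i)|=|cl(x_i^\ast)|$ for each $i$, and hence $|Q|=2\sum_{i=1}^{k}|cl(x_i)|$ is even.

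There is no serious obstacle here; the entire argument is a clean bookkeeping once one recognizes $^\ast$ as a fixed-point-free involution on $\mathscr{R}(Q)$. The only subtlety worth spelling out is verifying that $\mathscr{R}(Q)$ is $^\ast$-stable (so the involution is well defined on the index set of clouds) and that $cl(x_i)$ and $cl(x_j)$ are disjoint for $i\neq j$; both are quick consequences of (QB4), (QB5), and Lemma \ref{Lemma 2}(2).
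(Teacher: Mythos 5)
Your argument is correct and follows essentially the same route as the paper: use Lemma \ref{neq} to see that $^{\ast}$ is a fixed-point-free involution on $\mathscr{R}(Q)$ (so $|\mathscr{R}(Q)|=2k$), decompose $Q$ into the pairwise disjoint clouds of regular elements via Lemma \ref{Lemma Cloud}, and apply Lemma \ref{bijection} to pair off $|cl(x_i)|=|cl(x_i^{\ast})|$ for part (2). The only difference is that you explicitly verify $^{\ast}$-stability of $\mathscr{R}(Q)$, which the paper takes for granted since $\mathscr{R}(\mathbf{Q})$ is already known to be a Boolean algebra.
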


\begin{proof}
(1) Since $|\mathscr{R}(Q)|$ is finite, we can suppose that $\mathscr{R}(Q)=\{x_{1},x_{2},\cdots, x_{n}\}$ where the elements are all distinct. For any $x_{i}\in \mathscr{R}(Q)$, we have $x_{i}^{\ast}\in \mathscr{R}(Q)$ and $x_{i}^{\ast}\neq x_{i}$ by Lemma \ref{neq}. Then there exists $x_{j}\in \mathscr{R}(Q)$ such that $x_{j}=x_{i}^{\ast}$$(j\neq i)$, so $\mathscr{R}(Q)$ can be expressed as $\{x_{1}, x_{2}, \cdots, x_{k}, x_{1}^{\ast}, x_{2}^{\ast}, \cdots, x_{k}^{\ast}\}$ where $x_{j}^{\ast}\neq x_{i}$ $(i,j=1,2,\cdots,k)$ and $n=2k$. Hence $|\mathscr{R}(Q)|=2k$ is even. Moreover, on the one hand, $cl(x_{1})\sqcup \cdots \sqcup cl(x_{k}^{\ast})\subseteq Q$. On the other hand, for any $x\in Q$, since $x\vee x\in \mathscr{R}(Q)$, we have $x\in cl(x\vee x)\subseteq cl(x_{1})\sqcup \cdots \sqcup cl(x_{k}^{\ast})$, so $Q\subseteq cl(x_{1})\sqcup \cdots \sqcup cl(x_{k}^{\ast})$. Hence $cl(x_{1})\sqcup \cdots \sqcup cl(x_{k}^{\ast})=Q$.

(2) For any $x_{i}\in \mathscr{R}(Q)$, there exists $x_{i}^{\ast}\in \mathscr{R}(Q)$ such that $|cl(x_{i})|=|cl(x_{i}^{\ast})|$ by Lemma \ref{bijection}. Since $|Q|$ is finite, we have that $|\mathscr{R}(Q)|$ is finite, it turns out that $|Q|=2\sum_{i=1}^k|cl(x_{i})|$, so $|Q|$ is even.
\end{proof}

\begin{definition}\label{Definition 4}
Let $\mathbf{Q}$ be a non-flat QB-algebra. Then $\mathbf{Q}$ is called \emph{irreducible}, if $\mathscr{R}(Q)=\{0,1\}$.
\end{definition}

\begin{example}
Let $\mathbf{4}$ be the QB-algebra defined in Example $\ref{Example 4}$. Then $\mathscr{R}(4)=\{0,1\}$ and then $\mathbf{4}$ is irreducible. However, the QB-algebra $\mathbf{A}$ defined in Example $\ref{Example A}$, is not irreducible, since $\mathscr{R}(A)=\{0,a, f,1\}$.
\end{example}

\begin{corollary} Let $\mathbf{Q}$ be an irreducible QB-algebra. Then $Q=cl(0)\sqcup cl(1)$. Moreover, if $|Q|$ is finite, then $|Q|$ is even.
\end{corollary}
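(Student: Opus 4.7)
The plan is to deduce both claims directly from Proposition \ref{non flat number} together with the fact that an irreducible QB-algebra is, by definition, a non-flat QB-algebra whose regular-element set is exactly $\{0,1\}$.

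First I would verify that an irreducible QB-algebra is indeed non-flat, which is immediate since $\mathscr{R}(Q)=\{0,1\}$ forces $0\neq 1$ (otherwise the two elements would collapse). Having established this, I can invoke Proposition \ref{non flat number}(1) with the finite set $\mathscr{R}(Q)=\{0,1\}$. Because $0^{\ast}=1$ and $1^{\ast}=0$ by Lemma \ref{Lemma 1}(2), the pairing $\{x_{1},x_{1}^{\ast}\}$ in that proposition is realized by taking $x_{1}=0$ and $x_{1}^{\ast}=1$ (so $k=1$). Proposition \ref{non flat number}(1) then yields $Q=cl(0)\sqcup cl(1)$, which is the first assertion.

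For the moreover part, I would combine two earlier facts. If $|Q|$ is finite, then $|\mathscr{R}(Q)|$ is trivially finite, so Proposition \ref{non flat number}(2) applies and gives that $|Q|$ is even. Equivalently, and perhaps more informatively, Lemma \ref{bijection} provides a bijection $cl(0)\to cl(1)$ via $y\mapsto y^{\ast}$, so $|cl(0)|=|cl(1)|$ and hence $|Q|=|cl(0)|+|cl(1)|=2|cl(0)|$ by the disjoint decomposition just obtained.

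There is no real obstacle here: the corollary is a straightforward specialization of Proposition \ref{non flat number} to the case $\mathscr{R}(Q)=\{0,1\}$. The only mild care needed is to note that $0\neq 1$ so that the proposition (which requires $\mathbf{Q}$ non-flat) is applicable, and to identify the pairing $0\leftrightarrow 1$ under the involution $^{\ast}$ as the unique way to match regular elements with their complements in this minimal case.
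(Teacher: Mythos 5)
Your proposal is correct and is exactly the derivation the paper intends: the corollary is stated without proof precisely because it is the specialization of Proposition \ref{non flat number} to the case $\mathscr{R}(Q)=\{0,1\}$ with the pairing $x_{1}=0$, $x_{1}^{\ast}=1$, which gives $Q=cl(0)\sqcup cl(1)$ and the evenness of $|Q|$. Your added remark that $|Q|=2|cl(0)|$ via the bijection of Lemma \ref{bijection} is a harmless and slightly more explicit way to see the second claim; the only redundancy is your ``verification'' that $\mathbf{Q}$ is non-flat, since that is already part of Definition \ref{Definition 4}.
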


\begin{corollary}\label{c3.3}
Let $\mathbf{Q}$ be an irreducible QB-algebra. Then $\mathbf{Q}/\chi$ is isomorphic to $\mathbf{2}$.
\end{corollary}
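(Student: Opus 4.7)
The plan is to read off the quotient $\mathbf{Q}/\chi$ directly from the structure of clouds. By Lemma~\ref{Lemma Cloud} the $\chi$-classes in $\mathbf{Q}$ are exactly the clouds, and each cloud contains a unique regular element; equivalently, the assignment $x/\chi\mapsto x\vee x$ is a bijection between $Q/\chi$ and $\mathscr{R}(Q)$. Under the hypothesis that $\mathbf{Q}$ is irreducible we have $\mathscr{R}(Q)=\{0,1\}$, so $|Q/\chi|=2$.

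Next I would check that these two clouds are genuinely distinct, i.e.\ $0/\chi\neq 1/\chi$. Since $\mathbf{Q}$ is assumed non-flat by the definition of irreducibility, $0\neq 1$, and $0,1$ are distinct regular elements; by the uniqueness part of Lemma~\ref{Lemma Cloud} they lie in different clouds. Combined with $Q=cl(0)\sqcup cl(1)$ from the previous corollary, this confirms $Q/\chi=\{0/\chi,\,1/\chi\}$.

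Finally, Lemma~\ref{Lemma 2}(3) already gives that $\mathbf{Q}/\chi$ is a Boolean algebra with bottom $0/\chi$ and top $1/\chi$; any two-element Boolean algebra is isomorphic to $\mathbf{2}$, so the map sending $0/\chi\mapsto 0$ and $1/\chi\mapsto 1$ is the required QB-isomorphism (it automatically preserves $\vee,\wedge,{}^\ast$ because both algebras are bounded with the only nontrivial values forced by $0\vee 1=1$, $0\wedge 1=0$, $0^\ast=1$, $1^\ast=0$).

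There is no real obstacle here; the only mild subtlety is making sure one is allowed to conclude $0/\chi\neq 1/\chi$, which is where one uses that \emph{irreducible} in Definition~\ref{Definition 4} is defined only for non-flat QB-algebras, hence $0\neq 1$ and the uniqueness clause of Lemma~\ref{Lemma Cloud} applies.
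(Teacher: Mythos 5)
Your proof is correct and follows exactly the route the paper leaves implicit for this unproved corollary: the $\chi$-classes are the clouds, Lemma~\ref{Lemma Cloud} gives a bijection $Q/\chi\to\mathscr{R}(Q)=\{0,1\}$, the two classes are distinct because $0\neq 1$ in a non-flat algebra, and Lemma~\ref{Lemma 2}(3) plus the uniqueness of the two-element Boolean algebra finishes the argument. No gaps.
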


\begin{proposition}
Let $\mathbf{Q}$ be a non-flat QB-algebra. Then $\mathbf{Q}$ is irreducible if and only if $\mathbf{Q}$ is embedded into the direct product of $\mathbf{2}$ and $\mathbf{F}_{2k+1}$
where $\mathbf{F}_{2k+1}$ is some flat QB-algebra and $k\in \mathbb{N}$.
\end{proposition}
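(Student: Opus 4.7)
For the forward implication I would assume $\mathbf{Q}$ is irreducible and apply Theorem~\ref{Theorem 1} to embed $\mathbf{Q}$ into $\mathbf{Q}/\chi \times \mathbf{Q}/\tau$. Corollary~\ref{c3.3} identifies $\mathbf{Q}/\chi$ with $\mathbf{2}$, so the only remaining task is to show that $|Q/\tau| = 2k+1$ for some $k \in \mathbb{N}$. The $\tau$-classes consist of the single class $\mathscr{R}(Q)$ (which collapses all regular elements together) together with a singleton $\{x\}$ for each irregular $x$, so $|Q/\tau| = 1 + |\mathscr{IR}(Q)|$. Irreducibility gives $\mathscr{R}(Q) = \{0,1\}$, Proposition~\ref{non flat number}(1) yields $Q = cl(0) \sqcup cl(1)$, and the bijection $f : cl(0) \to cl(1)$ with $f(y)=y^{*}$ supplied by Lemma~\ref{bijection} sends $0$ to $1$, hence restricts to a bijection between $cl(0)\setminus\{0\}$ and $cl(1)\setminus\{1\}$. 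Consequently $|\mathscr{IR}(Q)| = 2k$ with $k = |cl(0)\setminus\{0\}|$, and $\mathbf{Q}/\tau$ is the desired flat QB-algebra of cardinality $2k+1$.

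For the reverse implication I would suppose that $\iota : \mathbf{Q} \hookrightarrow \mathbf{2} \times \mathbf{F}_{2k+1}$ is a QB-embedding and compute the regular part of the target. The pivotal observation is that in any flat QB-algebra $\mathbf{F}$ the Boolean subalgebra $\mathscr{R}(\mathbf{F})$ satisfies $0 = 1$, which forces $\mathscr{R}(\mathbf{F}) = \{0\}$; since every element of $\mathbf{2}$ is regular and regularity in a direct product is componentwise, this gives $\mathscr{R}(\mathbf{2} \times \mathbf{F}_{2k+1}) = \{(0,0), (1,0)\}$, which is precisely the pair $\{0,1\}$ of the product algebra. Because QB-homomorphisms preserve regularity, $\iota(\mathscr{R}(Q)) \subseteq \{0,1\}$, and since $\iota$ is injective with $\iota(0) \neq \iota(1)$ (as $\mathbf{Q}$ is non-flat), we obtain $\mathscr{R}(Q) = \{0,1\}$; that is, $\mathbf{Q}$ is irreducible.

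The only step that is not pure bookkeeping is the identification $\mathscr{R}(\mathbf{F}_{2k+1}) = \{0\}$ used in the reverse direction, which I expect to be the main (mild) obstacle: it requires combining two earlier facts (that $\mathscr{R}$ is always a Boolean subalgebra, and that $\mathbf{F}_{2k+1}$ satisfies $0 = 1$) rather than quoting one lemma directly. Everything else falls out immediately from Theorem~\ref{Theorem 1}, Corollary~\ref{c3.3}, Lemma~\ref{bijection}, and Proposition~\ref{non flat number}.
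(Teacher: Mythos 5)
Your proposal is correct and follows essentially the same route as the paper: forward via Theorem~\ref{Theorem 1} and Corollary~\ref{c3.3} plus a count of the $\tau$-classes, and backward by computing $\mathscr{R}(\mathbf{2}\times\mathbf{F}_{2k+1})=\{\langle 0,0\rangle,\langle 1,0\rangle\}$ and using injectivity together with non-flatness to force $\mathscr{R}(Q)=\{0,1\}$. Your explicit pairing of $cl(0)\setminus\{0\}$ with $cl(1)\setminus\{1\}$ via $y\mapsto y^{*}$ just makes precise the paper's listing of $Q/\tau$ as $\{0/\tau, x_{1}/\tau, x_{1}^{*}/\tau,\dots\}$ (and the decomposition $Q=cl(0)\sqcup cl(1)$ you cite is the corollary following Definition~\ref{Definition 4} rather than Proposition~\ref{non flat number}(1), a trivial reference slip).
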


\begin{proof}
Let $\mathbf{Q}$ be irreducible. Then by Corollary \ref{c3.3} and Theorem \ref{Theorem 1}, we have that $\mathbf{Q}$ is embedded into the direct product of $\mathbf{2}$ and $\mathbf{Q}/\tau$.
Note that $x/\tau=\{x\}$ for $x\in \mathscr{IR}(Q)$ and $0/\tau=1/\tau=\{0,1\}$, we get that $\mathbf{Q}/\tau=\{0/\tau, x_{1}/\tau, x^{\ast}_{1}/\tau,\cdots,x_{i}/\tau, x^{\ast}_{i}/\tau,\cdots\}$ for some $i\in \mathbb{N}$. Denote $\mathbf{F}_{2k+1}=\mathbf{Q}/\tau$. Then $\mathbf{Q}$ is embedded into the direct product of $\mathbf{2}$ and $\mathbf{F}_{2k+1}$.
Conversely, since the direct product of $\mathbf{2}$ and $\mathbf{F}_{2k+1}$ is a QB-algebra, we have $\mathscr{R}(\mathbf{2}\times \mathbf{F}_{2k+1})=\{\langle 0,0\rangle,\langle 1,0\rangle\}$. Because $\mathbf{Q}$ is embedded into the QB-algebra $\mathbf{2}\times \mathbf{F}_{2k+1}$, we have $|\mathscr{R}(Q)|\leq 2$. Moreover, $0, 1\in \mathscr{R}(Q)$, it turns out that $\mathscr{R}(Q)=\{0,1\}$, so $\mathbf{Q}$ is irreducible.
\end{proof}

\begin{corollary}
Let $\mathbf{Q}$ be a finite non-flat QB-algebra. Then $\mathbf{Q}$ is irreducible if and only if $\mathbf{Q}$ is the direct product of $\mathbf{2}$ and $\mathbf{F}_{2k+1}$
where $\mathbf{F}_{2k+1}$ is a finite flat QB-algebra defined as Example $\ref{Example F3}$ and $k\in \mathbb{N}$. Moreover, $|Q|=4k+2$.
\end{corollary}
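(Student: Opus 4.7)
The plan is to treat the two implications separately, leveraging the preceding proposition as the heart of the argument.

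For the backward direction I would verify directly that $\mathbf{2}\times\mathbf{F}_{2k+1}$ has the required features. Computing the regular set componentwise, $\mathscr{R}(\mathbf{2}\times\mathbf{F}_{2k+1})=\mathscr{R}(\mathbf{2})\times\mathscr{R}(\mathbf{F}_{2k+1})=\{0,1\}\times\{0\}=\{0_{\mathbf{Q}},1_{\mathbf{Q}}\}$, which is precisely the pair of neutral elements, so $\mathbf{Q}$ is irreducible; non-flatness is clear since the factor $\mathbf{2}$ is non-flat; and $|Q|=|\mathbf{2}|\cdot|\mathbf{F}_{2k+1}|=2(2k+1)=4k+2$ follows at once from the product structure.

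For the forward direction I would invoke the preceding proposition applied to the finite irreducible non-flat $\mathbf{Q}$, which already supplies an embedding $\varphi:\mathbf{Q}\hookrightarrow \mathbf{2}\times\mathbf{F}_{2k+1}$ with $\mathbf{F}_{2k+1}\cong\mathbf{Q}/\tau$. Two remaining tasks are then (i) to confirm that $\mathbf{Q}/\tau$ is the particular flat algebra of Example \ref{Example F3}, in which every binary operation on non-zero classes collapses to $0$, and (ii) to promote $\varphi$ from an embedding to an isomorphism. For (i), given irregular $x,y\in Q$, the identity $(x\vee y)\vee(x\vee y)=(x\vee x)\vee(y\vee y)$ lands in $\mathscr{R}(Q)=\{0,1\}$, placing $x\vee y$ in one of the two clouds $cl(0)$ or $cl(1)$; combining this with axiom (QB3) and Lemma \ref{Lemma 1}, I would argue that $x\vee y$ must coincide with the regular representative of its cloud, so $(x\vee y)/\tau=0/\tau$, and an entirely analogous argument handles $\wedge$.

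The main obstacle lies in step (ii), namely upgrading the embedding into a genuine direct-product isomorphism and reading off the cardinality $|Q|=4k+2$. I would argue by matching cardinalities: Proposition \ref{non flat number}(1), Lemma \ref{bijection}, and Corollary \ref{cor empty} give the decomposition $Q=cl(0)\sqcup cl(1)$ with $|cl(0)|=|cl(1)|$, so $|Q|=2|cl(0)|$; the coset enumeration in the proof of the preceding proposition identifies $|\mathbf{F}_{2k+1}|=2k+1$, and matching these quantities yields $|Q|=2(2k+1)=4k+2=|\mathbf{2}\times\mathbf{F}_{2k+1}|$. Injectivity of $\varphi$ between finite sets of the same cardinality then forces surjectivity, producing the required isomorphism $\mathbf{Q}\cong\mathbf{2}\times\mathbf{F}_{2k+1}$ and completing the proof.
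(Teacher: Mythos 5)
Your backward direction is fine, and your step (i) is essentially right (indeed one can show directly from (QL4) that $(x\vee y)\vee(x\vee y)=x\vee y$, so every binary operation in $\mathbf{Q}/\tau$ is constantly $0/\tau$). The genuine gap is exactly where you locate the ``main obstacle,'' in step (ii), and it cannot be closed. Your cardinality matching silently identifies $|cl(0)|$ with $|\mathbf{F}_{2k+1}|=|\mathbf{Q}/\tau|$, but these are different numbers: the decomposition $Q=cl(0)\sqcup cl(1)$ with $|cl(0)|=|cl(1)|$ gives $|cl(0)|=|Q|/2$, whereas $\tau$ collapses only the two regular elements into one class and keeps each irregular element as a singleton class, so $|\mathbf{Q}/\tau|=1+|\mathscr{IR}(Q)|=|Q|-1$. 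Consequently $|\mathbf{2}\times\mathbf{Q}/\tau|=2(|Q|-1)>|Q|$ whenever $|Q|>2$, and the embedding supplied by Theorem \ref{Theorem 1} is never surjective except in the degenerate case $\mathbf{Q}\cong\mathbf{2}$.

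Moreover, no alternative route can repair step (ii), because the forward implication as literally stated fails: the algebra $\mathbf{4}$ of Example \ref{Example 4} is a finite, non-flat, irreducible QB-algebra with $|Q|=4$, and $4\neq 4k+2$ for any $k\in\mathbb{N}$, so $\mathbf{4}$ is not isomorphic to any $\mathbf{2}\times\mathbf{F}_{2k+1}$ of the stated form. (One does have $\mathbf{4}\cong\mathbf{2}\times\mathbf{F}$ for the two-element flat algebra $\mathbf{F}=\{0,x\}$ with $x^{\ast}=x$, but that factor has even order and a $^{\ast}$-fixed point, hence is not ``defined as Example \ref{Example F3}.'') The paper states this corollary without proof, so there is nothing to compare your argument against on that score; but the difficulty you flag in step (ii) is a real obstruction, and the counting argument you use to dissolve it is precisely where the proposal breaks down.
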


Below we see the case of flat QB-algebras.

\begin{lemma}\label{Lemma 3}\label{vee 0}
Let $\mathbf{Q}$ be a flat QB-algebra and $x,y\in Q$. Then $x\wedge y=x\vee y=0$.
\end{lemma}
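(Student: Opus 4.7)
The plan is to reduce the statement to the observation that in a flat QB-algebra every element $y$ satisfies $y\vee y = y\wedge y = 0$, and then collapse arbitrary joins and meets via axiom (QL4). The flatness hypothesis $1=0$ together with (QB2) already gives $x\vee 0 = x\vee 1 = 1 = 0$ and $x\wedge 0 = 0$ for every $x\in Q$, so the annihilating behaviour at $0$ is immediate.

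First I would establish $y\vee y = y\wedge y = 0$ for every $y\in Q$. Since $0\leq y\leq 1 = 0$ in a flat QB-algebra (using the remark after Lemma \ref{Lemma 1} that $0\leq x\leq 1$ always holds), the pair $\langle y,0\rangle$ lies in the congruence $\chi$, and Lemma \ref{Lemma 2}(2) then yields $y\vee y = 0\vee 0 = 0$ (noting that $0$ is regular by (QB2)). Axiom (QL5) converts this into $y\wedge y = y\vee y = 0$. A more direct route, avoiding $\chi$, is simply to unpack $y\leq 0$ as $y\wedge 0 = y\wedge y$, whose left-hand side is $0$ by (QB2).

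With that in hand, (QL4) finishes the job: $x\vee y = x\vee(y\vee y) = x\vee 0 = 0$, and symmetrically $x\wedge y = x\wedge(y\wedge y) = x\wedge 0 = 0$.

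I do not anticipate a genuine obstacle. The only thing requiring care is bookkeeping around the axioms: one must remember that $0$ itself is regular in order to invoke Lemma \ref{Lemma 2}(2), and one must apply (QL4) in the correct direction so that $y\vee y$ (respectively $y\wedge y$) is pulled out of the compound expression. Everything else is mechanical once one sees that flatness forces the entire quasi-order to collapse onto $\{0\}$.
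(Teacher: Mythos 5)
Your proof is correct. The paper itself states this lemma without proof, so there is nothing to compare against, but your argument is exactly the natural one: from $y\leq 1=0$ and (QB2) you get $y\wedge y=y\wedge 0=0$, hence $y\vee y=0$ by (QL5), and then (QL4) collapses $x\vee y=x\vee(y\vee y)=x\vee 0=x\vee 1=1=0$ and $x\wedge y=x\wedge(y\wedge y)=x\wedge 0=0$. (A marginally shorter variant of your first step: (QL3) gives $y\wedge y=y\wedge(y\vee y^{\ast})=y\wedge 1=y\wedge 0=0$ directly, but your route through the quasi-order is equally valid.)
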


\begin{corollary}\label{Cor RQ}
Let $\mathbf{Q}$ be a flat QB-algebra. Then $\mathscr{R}(Q)=\{0\}$.
\end{corollary}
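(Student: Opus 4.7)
The plan is to deduce this as an immediate consequence of the preceding Lemma \ref{vee 0} together with the definition of regularity. Recall that $x \in Q$ is regular iff $x \vee x = x$ (equivalently, $x \wedge x = x$), and that in a flat QB-algebra Lemma \ref{vee 0} asserts $x \vee y = x \wedge y = 0$ for all $x, y \in Q$.

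First, I would verify the inclusion $\{0\} \subseteq \mathscr{R}(Q)$. By Lemma \ref{vee 0} (taking $x = y = 0$) we have $0 \vee 0 = 0$, so $0$ is regular. For the reverse inclusion $\mathscr{R}(Q) \subseteq \{0\}$, let $x \in \mathscr{R}(Q)$. Then on the one hand $x = x \vee x$ by regularity, and on the other hand $x \vee x = 0$ by Lemma \ref{vee 0} applied with $y = x$. Combining these gives $x = 0$, as desired.

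There is no genuine obstacle here — the corollary is a direct one-line consequence of the previous lemma. The only thing worth a sanity check is consistency with the general structure theory: since flat QB-algebras satisfy $1 = 0$, the induced Boolean algebra $\mathscr{R}(\mathbf{Q})$ collapses to the trivial one-element Boolean algebra $\{0\}$, which matches the conclusion and is consistent with Theorem \ref{Theorem 1} in the flat case.
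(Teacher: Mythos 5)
Your proof is correct and follows essentially the same argument as the paper: $0$ is regular, and for any regular $x$ one has $x = x \vee x = 0$ by Lemma \ref{vee 0}. Nothing further is needed.
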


\begin{proof}
Obviously, $0\in \mathscr{R}(Q)$. For any $x\in \mathscr{R}(Q)$, we have $x\vee x=x$. Since $x=x\vee x=0$ by Lemma \ref{Lemma 3}, we have $\mathscr{R}(Q)\subseteq\{0\}$. Hence $\mathscr{R}(Q)=\{0\}$.
\end{proof}

\begin{corollary}\label{cl0Q}
Let $\mathbf{Q}$ be a flat QB-algebra. Then $cl(0)=Q$.
\end{corollary}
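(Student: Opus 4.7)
The plan is straightforward: unfold the definition of $cl(0)$ and apply Lemma \ref{vee 0}. Recall that $cl(0) = 0/\chi$, and by Lemma \ref{Lemma 2}(2), $y \in 0/\chi$ iff $y \vee y = 0 \vee 0 = 0$ (note that $0$ is regular).

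First I would take an arbitrary $y \in Q$. Since $\mathbf{Q}$ is flat, Lemma \ref{vee 0} gives $y \vee y = 0$, and certainly $0 \vee 0 = 0$ as well. Therefore $y \vee y = 0 \vee 0$, which by Lemma \ref{Lemma 2}(2) means $\langle 0, y \rangle \in \chi$, i.e. $y \in 0/\chi = cl(0)$. This shows $Q \subseteq cl(0)$. The reverse inclusion $cl(0) \subseteq Q$ is trivial since $cl(0)$ is by definition a subset of $Q$.

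There is no substantive obstacle here: the entire content of the statement is packaged in Lemma \ref{vee 0}, which collapses every join (and meet) in a flat QB-algebra to $0$, forcing every element into the single $\chi$-equivalence class containing $0$. In particular, this corollary together with Corollary \ref{Cor RQ} shows that a flat QB-algebra has exactly one cloud, and that cloud consists of every element of $Q$ while containing only the regular element $0$, consistent with Lemma \ref{Lemma Cloud}.
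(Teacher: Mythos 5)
Your proof is correct and follows exactly the paper's own argument: both use Lemma \ref{vee 0} to get $y\vee y=0=0\vee 0$ for arbitrary $y\in Q$, conclude $y\in cl(0)$ via the characterization of $\chi$ in Lemma \ref{Lemma 2}(2), and note the reverse inclusion is trivial. No issues.
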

\begin{proof}
Obviously, $cl(0)\subseteq Q$. On the other hand, for any $x\in Q$, since $x\vee x=0=0\vee 0$ by Lemma \ref{vee 0}, we have $x\in cl(0)$, so $Q\subseteq cl(0)$. Hence $Q=cl(0)$.
\end{proof}

\begin{proposition}
Let $\mathbf{Q}$ be a finite flat QB-algebra. If $\mathbf{Q}$ contains $k$ elements $x_{1}, \cdots, x_{k}$ with $x_{i}=x_{i}^{\ast} (i=1,\cdots, k)$, then $|Q|=m+k$ where $m$ is even. Especially, if $k=1$, then $|Q|$ is odd.
\end{proposition}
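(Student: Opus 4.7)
The plan is to exploit the fact that the unary operation $^{\ast}$ is an involution on $Q$ (by (QB5)) and to count $|Q|$ by partitioning $Q$ into the set of fixed points of $^{\ast}$ and the set of non-fixed points.

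First I would note that $^{\ast} : Q \to Q$ is a well-defined bijection with $(x^{\ast})^{\ast} = x$, so it is an involution on $Q$. Hence every orbit of $^{\ast}$ has size $1$ or $2$: an orbit of size $1$ corresponds precisely to an element $x$ with $x^{\ast} = x$, and an orbit of size $2$ corresponds to a pair $\{x, x^{\ast}\}$ with $x \neq x^{\ast}$.

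Next I would write $Q = F \sqcup N$, where $F = \{x \in Q \mid x^{\ast} = x\}$ and $N = Q \setminus F$. By hypothesis $|F| = k$. The restriction of $^{\ast}$ to $N$ partitions $N$ into two-element orbits $\{x, x^{\ast}\}$, so $|N|$ is even; set $m = |N|$. Therefore $|Q| = k + m$ with $m$ even.

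For the special case $k = 1$, we have $|Q| = 1 + m$ with $m$ even, so $|Q|$ is odd. I do not anticipate any real obstacle here; the argument is a direct orbit-counting argument using only (QB5), and no specifically flat feature of $\mathbf{Q}$ is needed for the counting itself (the flatness hypothesis simply guarantees that the set $F$ is nontrivial, since $0^{\ast} = 1 = 0 \in F$).
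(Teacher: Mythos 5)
Your proposal is correct and follows essentially the same route as the paper: both partition $Q$ into the fixed-point set of $^{\ast}$ (of size $k$) and its complement, observe that the complement is a union of two-element orbits $\{x,x^{\ast}\}$ and hence has even cardinality, and note that flatness only enters via $0^{\ast}=1=0$ placing $0$ in the fixed-point set. No gaps.
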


\begin{proof}
Denote $S=\{x\in Q\,|\,x^{\ast}=x\}$. Then $|S|=k$. Since $\mathbf{Q}$ is a finite flat QB-algebra, we have $0^{\ast}=1=0$, it turns out that $0\in S$, so $k\geq 1$. Denote $Q\backslash S=\{x\in Q|x^{\ast}\neq x\}$. For any $x\in Q\backslash S$, we have $x^{\ast}\in Q\backslash S$, so $|Q\backslash S|$ is even. Hence $|Q|=|Q\backslash S|+|S|=m+k$ where $\emph{m}$ is even. Especially, if $k=1$, i.e., $S=\{0\}$, then $|Q|$ is odd.
\end{proof}

\section{The standard completeness of QB-algebras}

In this section, we show the standard completeness of QB-algebras. At first, we prove the standard completeness theorem for $\mathbb{FQB}$ with respect to the flat QB-algebra $ \mathbf{F}_{3}$.

\begin{theorem}\label{Proposition 4.1}
Let $p$ and $q$ be terms in the language of QB-algebras. Then
$$
\mathbb{FQB}\models p\approx q  \; \mathrm{iff}  \; \mathrm{\mathbf{F}_{3}}\models p\approx q.
$$
\end{theorem}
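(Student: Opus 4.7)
The forward direction is immediate, since $\mathbf{F}_3$ is itself a flat QB-algebra. For the converse I plan to exploit how drastically the operations collapse in the flat setting, reducing the problem to a short syntactic comparison. By Lemma~\ref{Lemma 3}, in every flat QB-algebra both $\vee$ and $\wedge$ return $0$, and since $1=0$ also $0^\ast=0$; thus terms cannot really grow, and the only nontrivial operation is $^\ast$, which (QB5) collapses modulo $2$.

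The first step is to attach to every term $p$ a syntactic normal form $\mathrm{nf}(p)$ drawn from the finite set consisting of the symbol $0$ together with $x_i$ and $x_i^{\ast}$ for the variables $x_i$ occurring in $p$. I would define $\mathrm{nf}$ recursively by $\mathrm{nf}(x_i)=x_i$, $\mathrm{nf}(0)=\mathrm{nf}(1)=0$, $\mathrm{nf}(s\vee t)=\mathrm{nf}(s\wedge t)=0$, and $\mathrm{nf}(s^\ast)$ obtained from $\mathrm{nf}(s)$ by the rules $0\mapsto 0$, $x_i\mapsto x_i^\ast$, $x_i^\ast\mapsto x_i$. A routine induction on term structure then shows $p^{\mathbf{F}}(v)=\mathrm{nf}(p)^{\mathbf{F}}(v)$ for every flat QB-algebra $\mathbf{F}$ and every valuation $v$, invoking Lemma~\ref{Lemma 3} for the binary cases and (QB5) together with $0^\ast=0$ for the unary case.

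The second and decisive step is to show that $\mathbf{F}_3\models p\approx q$ forces $\mathrm{nf}(p)=\mathrm{nf}(q)$ as syntactic objects. There are at most nine mismatched pairs to rule out, and each is refuted by an explicit valuation into $\mathbf{F}_3=\{0,c,d\}$: for instance, $\mathrm{nf}(p)=0$ versus $\mathrm{nf}(q)=x_j$ is killed by $x_j\mapsto c$; $\mathrm{nf}(p)=x_i$ versus $\mathrm{nf}(q)=x_j$ with $i\neq j$ is killed by $x_i\mapsto c$, $x_j\mapsto d$; and $\mathrm{nf}(p)=x_i$ versus $\mathrm{nf}(q)=x_i^\ast$ is killed by $x_i\mapsto c$, which yields $c\neq d$. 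Once $\mathrm{nf}(p)=\mathrm{nf}(q)$ is forced, the first step delivers $p^{\mathbf{F}}=q^{\mathbf{F}}$ in every flat QB-algebra.

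The main obstacle is purely bookkeeping: verifying that the negation clause of the normal-form recursion does not produce arbitrarily long strings of stars (where (QB5) is essential), and that the small case table for step two is genuinely exhaustive. No deeper structural theorem beyond the collapse in Lemma~\ref{Lemma 3} and the three-element witness structure of $\mathbf{F}_3$ is required.
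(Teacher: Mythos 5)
Your proof is correct and follows essentially the same route as the paper: both rest on the collapse of the binary operations to $0$ via Lemma~\ref{Lemma 3} (together with $0^{\ast}=1=0$) and the parity of iterated $^{\ast}$ via (QB5), followed by a finite case check with explicit witness valuations in $\mathbf{F}_{3}$. Your normal-form function is simply a tidier packaging of the paper's case analysis on whether $p$ and $q$ contain occurrences of binary operations, and it makes the exhaustiveness of the case table more transparent.
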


\begin{proof}
Suppose that $\mathbb{FQB}\models p\approx q$. Then we have $\mathrm{\mathbf{F}_{3}}\models p\approx q$.

Conversely, suppose that $\mathrm{\mathbf{F}_{3}}\models p\approx q$ and if $\mathbb{FQB}\nvDash p\approx q$, then there exists a flat QB-algebra $\mathbf{F}$ such that $\mathrm{\mathbf{F}}\nvDash p\approx q$. We distinguish several cases.

(1) Both $p(x_{1},x_{2},\cdots,x_{n})$ and $q(x_{1},x_{2},\cdots,x_{m})$ contain at least an occurrence of a binary operation. Then for any $a_{1},a_{2},\cdots,a_{m},\cdots,a_{n}\in F$, we have that
$$
p^\mathbf{F}(a_{1},a_{2},\cdots,a_{n})=0^\mathbf{F}=q^\mathbf{F}(a_{1},a_{2},\cdots,a_{m})
$$
from Lemma \ref{Lemma 3}, against the hypothesis.

(2) Either $p$ or $q$ contains at least an occurrence of a binary operation. Without loss of generality, we assume that $p$ contains at least an occurrence of a binary operation and $q$ does not contain any occurrence of a binary operation. Then $q$ is one of the following forms.

(2.1) If $q$ is a variable $\emph{x}$ and contains $k\,(0\leq k)$ occurrences of the operation $^{*}$, then we can assign $x$ the value $c\in \mathbf{F}_{3}$ and the variables $x_{1},\cdots ,x_{n}$ any values $0,c,d\in \mathbf{F}_{3}$, then $p^\mathbf{F}_{3}(0,\cdots,c,\cdots,d)=0$ $\neq $ $q^\mathbf{F}_{3}(c)$, this is a contradiction with $\mathrm{\mathbf{F}_{3}}\models p\approx q$.

(2.2) If $q$ is a constant and contains $k\,(0\leq k)$ occurrences of the operation $^{*}$, then $q^\mathbf{F}=0$, it follows that $q^\mathbf{F}=0=p^\mathbf{F}$, against the hypothesis.

(3) Neither $p$ nor $q$ contains any occurrence of a binary operation, then $p$ and $q$ contain at most one variable, it turns out that they have one of the following forms.

(3.1) $p$ is a constant and contains $k \,(0\leq k)$ occurrences of the operation $^{*}$, $q$ is a constant and contains $l \, (0\leq l)$ occurrences of the operation $^{*}$. Then we have $p^\mathbf{F}=0=q^\mathbf{F}$, against the hypothesis.

(3.2) $p$ is a variable $\emph{x}$ and contains $k\,(0\leq k)$ occurrences of the operation $^{*}$, $q$ is a constant and contains $l\, (0\leq l)$ occurrences of the operation $^{*}$. Assign $\emph{x}$ the value $c\in \mathbf{F}_{3}$, we can get that $p^\mathbf{F}_{3}(c)\neq 0=q^\mathbf{F}_{3}$, this is a contradiction with
$\mathrm{\mathbf{F}_{3}}\models p\approx q$.

(3.3) $p$ is a variable $x$ and contains $k\,(0\leq k)$ occurrences of the operation $^{*}$, $q$ is a variable $y$ and contains $l\,(0\leq l)$ occurrences of the operation $^{*}$. Assign $x$ the value $c\in \mathbf{F}_{3}$ and $y$ the value $0\in \mathbf{F}_{3}$, it follows that $p^\mathbf{F}_{3}(c)\neq q\mathbf{F}_{3}(0)$, this is a contradiction with $\mathrm{\mathbf{F}_{3}}\models p\approx q$.

(3.4) $\emph{p}$ is a variable $\emph{x}$ and contains $k\,(0\leq k)$ occurrences of the operation $^{*}$, $\emph{q}$ is a variable $\emph{x}$ and contains $l\,(0\leq l)$ occurrences of the operation $^{*}$. If $\emph{k}$ and $\emph{l}$ have same parity, then $p^\mathbf{F}$=$q^\mathbf{F}$, against the hypothesis. If $\emph{k}$ and $\emph{l}$ have different parities, then we can assign $\emph{x}$ the value $c\in \mathbf{F}_{3}$, it follows that $p^\mathbf{F}_{3}(c)\neq q^\mathbf{F}_{3}(c)$, this is a contradiction with $\mathrm{\mathbf{F}_{3}}\models p\approx q$.

Thus if $\mathrm{\mathbf{F}_{3}}\models p\approx q$, then we have $\mathbb{FQB}\models p\approx q$.
\end{proof}

\begin{remark}\label{FQB variety}
By Theorem \ref{Proposition 4.1}, we have that the flat QB-algebra $\mathrm{\mathbf{F}_{3}}$ generates $\mathbb{FQB}$.
\end{remark}

\begin{lemma}\label{Lemma 4.2}\emph{\cite{course}}
Let $p$ and $q$ be terms in the language of QB-algebras. Then
$$
\mathbb{B}\models p\approx q  \; \mathrm{iff}  \;  \mathbf{2}\models p \approx q.
$$
\end{lemma}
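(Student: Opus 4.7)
The plan is to establish both directions of the biconditional. The forward direction is immediate: since $\mathbf{2}$ is itself a Boolean algebra, $\mathbf{2}\in \mathbb{B}$, so any identity valid throughout $\mathbb{B}$ is in particular valid in $\mathbf{2}$.

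For the nontrivial direction, I would rely on the classical fact (essentially Stone's representation theorem) that every Boolean algebra embeds into a power of $\mathbf{2}$. Concretely, given any $\mathbf{B}\in \mathbb{B}$, let $X$ be the set of ultrafilters of $\mathbf{B}$ and consider the map $\varphi: B\to \mathbf{2}^{X}$ sending $b\in B$ to the characteristic function $U\mapsto [b\in U]$. Standard arguments show that $\varphi$ is an injective Boolean homomorphism, so $\mathbf{B}$ embeds as a subalgebra of $\mathbf{2}^{X}$. Now, assuming $\mathbf{2}\models p\approx q$, the identity is preserved under direct products, hence $\mathbf{2}^{X}\models p\approx q$, and therefore also $\mathbf{B}\models p\approx q$ because equations are preserved under subalgebras. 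This chain is precisely Birkhoff's HSP machinery specialized to the current setting.

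An alternative route, which avoids ultrafilters, is via the disjunctive normal form. Every term $t(x_{1},\dots,x_{n})$ in the Boolean signature can be reduced by equational consequences of (B1)--(B4) to a canonical DNF as a join of meets of literals, and two such DNFs coincide iff they define the same function on $\{0,1\}^{n}$. Hence if $\mathbf{2}\models p\approx q$, the canonical DNFs of $p$ and $q$ agree, whereupon $p\approx q$ is a theorem of $\mathbb{B}$ by the equational rewriting used during normalization.

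The main obstacle in either approach is a careful but routine bookkeeping step: for the Stone route, one must verify that ultrafilters separate the points of $\mathbf{B}$ (so that $\varphi$ is injective) and that $\varphi$ respects $\vee$, $\wedge$, $'$, $0$, $1$; for the DNF route, one must verify that each normalization rewrite is equationally derivable from (B1)--(B4). Both verifications are entirely standard and appear in any textbook treatment of Boolean algebras, which is why the authors can legitimately cite \cite{course} for this lemma.
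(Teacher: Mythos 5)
Your proposal is correct. The paper offers no proof of this lemma at all --- it is imported verbatim from Burris--Sankappanavar \cite{course} --- and your Stone/HSP argument (every Boolean algebra embeds in a power of $\mathbf{2}$; equations are preserved under products and subalgebras) is precisely the standard proof found there, usually phrased via the observation that $\mathbf{2}$ is the only nontrivial subdirectly irreducible Boolean algebra rather than via ultrafilters, though the two formulations are interchangeable. The DNF alternative is likewise sound; either route suffices, and no gap remains beyond the routine verifications you already flag.
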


Next we prove the standard completeness theorem for $\mathbb{QB}$ with respect to the QB-algebra $\mathbf{4}$.

\begin{theorem}\label{Theorem 2}
Let $p$ and $q$ be terms in the language of QB-algebras. Then
$$
\mathbb{QB}\models p\approx q  \; \mathrm{iff} \; \mathbf{4}\models p \approx q.
$$
\end{theorem}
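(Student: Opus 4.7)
The forward direction is immediate since $\mathbf{4}\in\mathbb{QB}$. For the converse, my plan is to reduce the equational theory of $\mathbf{4}$ to those of its two natural components, the Boolean algebra $\mathbf{2}$ and the flat QB-algebra $\mathbf{F}_3$, and then reassemble via the subdirect embedding of Theorem \ref{Theorem 1}.

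First I would show the two ``transfer'' statements: $\mathbf{4}\models p\approx q$ implies $\mathbf{2}\models p\approx q$, and $\mathbf{4}\models p\approx q$ implies $\mathbf{F}_3\models p\approx q$. For the first, the set $\mathscr{R}(4)=\{0,1\}$ of regular elements is a subalgebra of $\mathbf{4}$ (as noted in the discussion preceding Lemma \ref{Lemma 1}) and is manifestly isomorphic to $\mathbf{2}$; since equations are preserved under subalgebras, any equation valid in $\mathbf{4}$ is valid in $\mathbf{2}$. For the second, Example \ref{Example 3} already identifies $\mathbf{4}/\tau$ with $\mathbf{F}_3$, and equations are preserved under quotients, so $\mathbf{F}_3\models p\approx q$.

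Next I would invoke the two standard completeness results in hand. From $\mathbf{2}\models p\approx q$ and Lemma \ref{Lemma 4.2} we get $\mathbb{B}\models p\approx q$; from $\mathbf{F}_3\models p\approx q$ and Theorem \ref{Proposition 4.1} (equivalently, the fact recorded in Remark \ref{FQB variety} that $\mathbf{F}_3$ generates $\mathbb{FQB}$) we get $\mathbb{FQB}\models p\approx q$. Now let $\mathbf{Q}$ be an arbitrary QB-algebra. Theorem \ref{Theorem 1} embeds $\mathbf{Q}$ into $\mathbf{Q}/\chi\times\mathbf{Q}/\tau$, where $\mathbf{Q}/\chi\in\mathbb{B}$ and $\mathbf{Q}/\tau\in\mathbb{FQB}$. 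Both factors satisfy $p\approx q$ by the previous step, the direct product therefore satisfies it, and hence so does its subalgebra $\mathbf{Q}$, completing the argument.

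I expect no serious obstacle: the decomposition machinery of Section 2, together with the already-proved completeness theorems for $\mathbb{B}$ and $\mathbb{FQB}$, makes the proof essentially a bookkeeping exercise. The only points requiring a moment's care are the two transfer claims above, but both are already made available by the paper — the Boolean subalgebra structure on $\mathscr{R}(Q)$ and the concrete computation of $\mathbf{4}/\tau$ in Example \ref{Example 3}. If one wished to avoid invoking Example \ref{Example 3} by name, one could instead verify directly that the canonical projection $\mathbf{4}\to\mathbf{F}_3$ sending $0,1\mapsto 0$, $a\mapsto c$, $b\mapsto d$ is a QB-epimorphism, which is a short check against the tables.
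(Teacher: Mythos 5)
Your proposal is correct and follows essentially the same route as the paper: decompose an arbitrary $\mathbf{Q}$ via Theorem \ref{Theorem 1} into $\mathbf{Q}/\chi\times\mathbf{Q}/\tau$ and apply Lemma \ref{Lemma 4.2} and Theorem \ref{Proposition 4.1}, the paper merely running the argument in contrapositive form. Your version is if anything slightly more careful, since you make explicit the two transfer facts (that $\mathbf{2}\cong\mathscr{R}(\mathbf{4})$ is a subalgebra of $\mathbf{4}$ and that $\mathbf{F}_3\cong\mathbf{4}/\tau$ is a quotient) which the paper's proof uses tacitly when it passes from $\mathbf{2}\nvDash p\approx q$ or $\mathbf{F}_3\nvDash p\approx q$ to $\mathbf{4}\nvDash p\approx q$.
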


\begin{proof}
If $\mathbb{QB}\models p\approx q$, then we have $\mathrm{\mathbf{4}}\models p\approx q$.
Conversely, if $\mathrm{\mathbf{4}}\models p\approx q$ and we suppose that $\mathbb{QB}\nvDash p\approx q$, then there exists a QB-algebra $\mathrm{\mathbf{Q}}$ such that $\mathrm{\mathbf{Q}}\nvDash p\approx q$. By Theorem $\ref{Theorem 1}$, we have that $\mathrm{\mathbf{Q}}$ is embedded into the direct product of the Boolean algebra $\mathbf{Q}/ \chi$ and the flat QB-algebra $\mathbf{Q}/ \tau$. If $\mathrm{\mathbf{Q}}\nvDash p\approx q$, then either $\mathbf{Q}/ \chi\nvDash p\approx q$ or $\mathbf{Q}/ \tau\nvDash p\approx q$. If the former holds, then we have $\mathrm{\mathbf{2}}\nvDash p\approx q$ by Lemma \ref{Lemma 4.2}, so $\mathrm{\mathbf{4}}\nvDash p\approx q$. If the latter holds, then we have $\mathrm{\mathbf{F}_{3}}\nvDash p\approx q$ by Theorem \ref{Proposition 4.1}, so $\mathrm{\mathbf{4}}\nvDash p\approx q$. Thus if $\mathrm{\mathbf{4}}\models p\approx q$, then we have $\mathbb{QB}\models p\approx q$.
\end{proof}

\begin{remark}\label{FQB variety}
Similarly to Theorem \ref{Theorem 2}, we have that
$\mathbb{QB}\models p\approx q\; \mathrm{iff} \; \bar{\mathbf{4}}\models p \approx q$.
Moreover, we can see that the QB-algebras $\mathbf{4}$ and $\bar{\mathbf{4}}$ can generate the variety $\mathbb{QB}$, respectively.
\end{remark}

\section{Congruences on QB-algebras}

In this section, we show the congruence extension of the variety $\mathbb{QB}$. A variety $\mathbb{D}$ has the \emph{congruence extension property} (CEP, for short) if
for any $\mathbf{D}\in \mathbb{D}$, any subalgebra $\mathbf{D_{0}}$ of $\mathbf{D}$, and any congruence $\theta^{\mathbf{D_{0}}}$ on $\mathbf{D_{0}}$, there exists a congruence $\theta^{\mathbf{D}}$ on $\mathbf{D}$ which extends $\theta^{\mathbf{D_{0}}}$, i.e., $\theta^{\mathbf{D_{0}}} = \theta^{\mathbf{D}} \cap D_{0}^{2}$.

Based on the subdirect product decomposition of a QB-algebra, we can transform the study of the CEP in the variety $\mathbb{QB}$ into the study of the CEPs in the varieties $\mathbb{B}$ and $\mathbb{FQB}$, respectively.

\begin{lemma}\label{B CEP}\emph{\cite{Boolean CEP}}
The variety $\mathbb{B}$ has the CEP.
\end{lemma}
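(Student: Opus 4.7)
The plan is to exploit the classical bijection between congruences on a Boolean algebra and its ideals: to any ideal $I$ of $\mathbf{B}$ one associates the congruence $\theta_{I}=\{\langle x,y\rangle\in B^{2}\mid (x\wedge y')\vee(x'\wedge y)\in I\}$, and conversely, to any congruence $\theta$ on $\mathbf{B}$ one associates the ideal $I_{\theta}=\{x\in B\mid \langle x,0\rangle \in \theta\}$. These assignments are mutually inverse, so the problem of extending a congruence reduces to extending an ideal.

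Given a Boolean algebra $\mathbf{B}$, a Boolean subalgebra $\mathbf{B_{0}}$, and a congruence $\theta_{0}$ on $\mathbf{B_{0}}$, I would first pass to the ideal $I_{0}\subseteq B_{0}$ corresponding to $\theta_{0}$, and then extend it to the ideal of $\mathbf{B}$ generated by $I_{0}$, namely
\[
I=\{x\in B \mid x\leq a \text{ for some } a\in I_{0}\}.
\]
Since $I_{0}$ is closed under $\vee$ inside $\mathbf{B_{0}}$ and the join of $\mathbf{B}$ restricted to $B_{0}$ coincides with the join of $\mathbf{B_{0}}$, a routine check shows $I$ is closed under $\vee$ and downward closed in $\mathbf{B}$, hence an ideal. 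Let $\theta$ be the congruence on $\mathbf{B}$ associated with $I$.

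The main step is to verify that $\theta\cap B_{0}^{2}=\theta_{0}$. The inclusion $\theta_{0}\subseteq \theta\cap B_{0}^{2}$ is immediate because $I_{0}\subseteq I$. For the reverse inclusion, suppose $\langle x,y\rangle\in\theta\cap B_{0}^{2}$. Then the symmetric difference $x\triangle y=(x\wedge y')\vee(x'\wedge y)$ lies in $I$, so $x\triangle y\leq a$ for some $a\in I_{0}$. Because $\mathbf{B_{0}}$ is a subalgebra, $x\triangle y\in B_{0}$, and the Boolean order on $B_{0}$ is the restriction of that on $B$; since $I_{0}$ is an ideal of $\mathbf{B_{0}}$ (in particular downward closed within $B_{0}$), this forces $x\triangle y\in I_{0}$, whence $\langle x,y\rangle\in\theta_{0}$.

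The only mildly delicate point is checking that the order and the symmetric difference computed in $\mathbf{B_{0}}$ agree with those computed in $\mathbf{B}$ after restriction to $B_{0}$, but this is guaranteed by the very definition of a subalgebra. Apart from that, the argument is entirely standard, which is why the lemma is quoted from \emph{the} reference on Boolean algebras rather than proved from scratch; still, this ideal-based construction is precisely the template to mimic later when analysing $\mathbb{FQB}$ and then combining the two cases via the subdirect decomposition of Theorem \ref{Theorem 1}.
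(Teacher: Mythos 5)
Your proposal is correct. Note, however, that the paper does not prove this lemma at all: it is stated as a known fact and attributed to the literature (Givant--Halmos), so there is no in-paper argument to compare against. What you supply is the standard proof via the congruence--ideal correspondence for Boolean algebras, and it is sound: the generated ideal $I=\{x\in B\mid x\leq a \text{ for some } a\in I_{0}\}$ is indeed an ideal of $\mathbf{B}$ because $I_{0}$ is closed under the join of $\mathbf{B_{0}}$, which coincides with that of $\mathbf{B}$; and the key reverse inclusion works because $x\triangle y=(x\wedge y')\vee(x'\wedge y)$ is a term operation, so $x\triangle y\in B_{0}$ whenever $x,y\in B_{0}$, and $I_{0}$ is downward closed with respect to the induced order on $B_{0}$. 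The one point worth making explicit if this were written out in full is that ideals of a Boolean algebra are automatically downward closed (if $z\leq a\in I_{\theta}$ then $z=z\wedge a\mathrel{\theta}z\wedge 0=0$), which is what justifies the step forcing $x\triangle y\in I_{0}$; you invoke it correctly. So the proof is complete and is exactly the kind of argument the cited reference contains; it also parallels, as you observe, the extension arguments the paper carries out explicitly for $\mathbb{FQB}$ in Lemma \ref{Lemma 5} and for $\mathbb{QB}$ in Theorem \ref{QB CEP}.
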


Since the variety $\mathbb{B}$ has the CEP, we next discuss the variety $\mathbb{FQB}$.

\begin{lemma}\label{Lemma 5}
The variety $\mathbb{FQB}$ has the CEP.
\end{lemma}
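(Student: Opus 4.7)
The plan is to exploit the collapse of the binary operations in a flat QB-algebra: by Lemma~\ref{Lemma 3}, $x \vee y = x \wedge y = 0$ for all $x,y$, so a congruence on a flat QB-algebra is essentially just an equivalence relation on the underlying set that is compatible with the unary operation $^{\ast}$. Consequently, given a subalgebra $\mathbf{F_{0}}$ of a flat QB-algebra $\mathbf{F}$ and a congruence $\theta^{\mathbf{F_{0}}}$ on $\mathbf{F_{0}}$, the most economical candidate for an extension is
\[
\theta^{\mathbf{F}} := \theta^{\mathbf{F_{0}}} \cup \Delta_{F}, \qquad \Delta_{F} = \{\langle x,x\rangle : x \in F\}.
\]
First I would introduce this candidate and verify, in turn, that it is (i) an equivalence relation on $F$, (ii) compatible with all operations of $\mathbf{F}$, and (iii) restricts to $\theta^{\mathbf{F_{0}}}$ on $F_{0}^{2}$.

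For (i), reflexivity and symmetry are immediate from symmetry of $\theta^{\mathbf{F_{0}}}$ and of the diagonal. Transitivity splits into four cases according to whether each of the two given pairs lies in $\theta^{\mathbf{F_{0}}}$ or in $\Delta_{F}$; the two mixed cases are handled by noting that if $\langle a,b\rangle \in \Delta_{F}$ and $\langle b,c\rangle \in \theta^{\mathbf{F_{0}}}$ then $a = b \in F_{0}$, so $\langle a,c\rangle = \langle b,c\rangle \in \theta^{\mathbf{F_{0}}}$, and symmetrically for the other mixed case. For (ii), compatibility with $\vee$ and $\wedge$ is free of charge: for any $\langle a,b\rangle, \langle c,d\rangle \in \theta^{\mathbf{F}}$, Lemma~\ref{Lemma 3} gives $a \vee c = 0 = b \vee d$ and $a \wedge c = 0 = b \wedge d$, so the images sit in $\Delta_{F} \subseteq \theta^{\mathbf{F}}$. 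Compatibility with $^{\ast}$ follows directly from compatibility of $\theta^{\mathbf{F_{0}}}$ with $^{\ast}$ in the first case, and from $a = b \Rightarrow a^{\ast} = b^{\ast}$ in the diagonal case.

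For (iii), the inclusion $\theta^{\mathbf{F_{0}}} \subseteq \theta^{\mathbf{F}} \cap F_{0}^{2}$ is obvious; conversely, if $\langle a,b\rangle \in \theta^{\mathbf{F}} \cap F_{0}^{2}$, then either $\langle a,b\rangle \in \theta^{\mathbf{F_{0}}}$ already, or $a = b \in F_{0}$, in which case reflexivity of $\theta^{\mathbf{F_{0}}}$ puts $\langle a,b\rangle$ in $\theta^{\mathbf{F_{0}}}$. This gives the desired equality and completes the proof that $\mathbb{FQB}$ has the CEP.

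There is no genuine obstacle here; the only \emph{conceptual} point worth flagging is the recognition that flatness trivializes $\vee$ and $\wedge$, reducing a congruence to an $^{\ast}$-compatible equivalence, after which the construction $\theta^{\mathbf{F_{0}}} \cup \Delta_{F}$ works almost by inspection. If anything demands care it is the mixed case of transitivity, where one has to invoke the fact that $\theta^{\mathbf{F_{0}}} \subseteq F_{0}^{2}$ to conclude that a diagonal pair meeting a $\theta^{\mathbf{F_{0}}}$-pair at a common endpoint forces that endpoint into $F_{0}$.
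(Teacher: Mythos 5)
Your proposal is correct and follows essentially the same route as the paper: both define the extension as $\theta^{\mathbf{F_{0}}} \cup \Delta_{F}$, use Lemma~\ref{Lemma 3} to trivialize compatibility with $\vee$ and $\wedge$, handle $^{\ast}$ by the same two-case split, and verify the restriction identity in the same way. No gaps.
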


\begin{proof}
For any $\mathbf{F}\in \mathbb{FQB}$, $\mathbf{F_{0}}$ is any subalgebra of $\mathbf{F}$, and $\theta$ is any congruence on $\mathbf{F_{0}}$, we define a binary relation $\theta' =\{\langle x,y \rangle \in F^{2} \,|\, \langle x,y \rangle \in \theta \,\mathrm{or}\,x=y  \}$. Then $\theta'$ is a congruence on $\mathbf{F}$ such that $\theta =\theta' \cap F_{0} ^{2}$. Obviously, $\theta'$ is an equivalence relation on $\mathbf{F}$. For any $\langle x,y \rangle, \langle u,v \rangle \in \theta '$, since $x\vee u=0=y\vee v$ and $x\wedge u=0=y\wedge v$ by Lemma \ref{Lemma 3}, we have $\langle x\vee u, y\vee v\rangle \in \theta'$ and $\langle x\wedge u, y\wedge v\rangle \in \theta'$. For any $\langle x,y\rangle \in \theta'$, we have $\langle x,y\rangle \in \theta$ or $x=y$. If $\langle x, y\rangle \in \theta$, then we have $\langle x^{\ast}, y^{\ast}\rangle \in \theta$, so $\langle x^{\ast}, y^{\ast}\rangle \in \theta'$. If $x=y$, then we have $x^{\ast}=y^{\ast}$, so $\langle x^{\ast}, y^{\ast}\rangle \in \theta'$. Hence $\theta'$ is a congruence on $\mathbf{F}$. Moreover, for any $\langle x,y \rangle \in \theta$, we have $\langle x,y \rangle \in F_{0}^{2}$ and $\langle x,y \rangle \in \theta'$, which imply that $\langle x,y \rangle \in \theta' \cap F_{0}^{2}$, so $\theta \subseteq \theta' \cap F_{0} ^{2}$. Conversely, for any $\langle x,y \rangle \in \theta' \cap F_{0}^{2}$, then we have $\langle x,y \rangle \in \theta'$ and $\langle x,y \rangle \in F_{0}^{2}$, it turns out that $\langle x,y \rangle \in \theta$ or $x=y$. If $\langle x,y \rangle \in \theta$, then $\theta' \cap F_{0}^{2}\subseteq \theta$. If $x=y$, since $\theta$ is a congruence on $\mathbf{F_{0}}$, we have $\langle x,y \rangle \in \theta$ and then $\theta' \cap F_{0}^{2}\subseteq \theta$. So $\theta =\theta' \cap F_{0}^{2}$. Thus the variety $\mathbb{FQB}$ has the CEP.
\end{proof}

\begin{lemma}\label{Lemma 6}
Let $\mathbf{Q}$ be a QB-algebra and $\theta$ be a congruence on $\mathbf{Q}$. Then there exist a congruence $\theta_{1}$ on $\mathbf{Q}/\chi$ and a congruence $\theta_{2}$ on $\mathbf{Q}/\tau$ such that $\langle x,y \rangle \in \theta$ iff $\langle \langle x/\chi ,x/\tau \rangle, \langle y/\chi ,y/\tau \rangle \rangle \in \theta_{1} \times \theta_{2}$ for any $x,y\in Q$.
\end{lemma}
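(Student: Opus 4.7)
The plan is to define $\theta_1$ and $\theta_2$ as the direct images of $\theta$ under the canonical quotient maps, namely
\[
\theta_1 = \bigl\{\langle x/\chi,\, y/\chi\rangle : \exists\, x',y'\in Q,\ \langle x,x'\rangle,\langle y,y'\rangle\in\chi,\ \langle x',y'\rangle\in\theta\bigr\},
\]
and $\theta_2$ defined analogously with $\tau$ in place of $\chi$. First, I would verify that these are congruences: reflexivity and symmetry are immediate, and compatibility with $\vee,\wedge,{}^{*}$ follows directly because $\chi$ (respectively $\tau$) is itself a congruence by Lemma \ref{Lemma 2}(1). The only nontrivial point is transitivity: if two $\theta_1$-chains share a common middle class $y/\chi$ with witnesses $y_1,y_2$, then $y_1\vee y_1 = y_2\vee y_2$ by Lemma \ref{Lemma 2}(2), and applying $\vee$-idempotence within $\theta$ splices the two chains. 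The argument for $\theta_2$ is identical.

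The forward direction of the biconditional is immediate by taking $x'=x$, $y'=y$. For the reverse direction, assume $\langle x/\chi,y/\chi\rangle\in\theta_1$ and $\langle x/\tau,y/\tau\rangle\in\theta_2$, fix witnesses $x_1,y_1$ with $\langle x_1,x\rangle,\langle y_1,y\rangle\in\chi$ and $\langle x_1,y_1\rangle\in\theta$, and $x_2,y_2$ with $\langle x_2,x\rangle,\langle y_2,y\rangle\in\tau$ and $\langle x_2,y_2\rangle\in\theta$. The main obstacle is that $\tau$-equivalence only supplies the dichotomy $x=x_2$ or both $x,x_2\in\mathscr{R}(Q)$, and likewise for $y$; I would therefore run a case analysis according to which alternative holds on each side.

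The universal tool across cases is that $\vee$-idempotence applied to $\langle x_1,y_1\rangle\in\theta$, together with $x_1\vee x_1 = x\vee x$ and $y_1\vee y_1 = y\vee y$ from Lemma \ref{Lemma 2}(2), produces $\langle x\vee x,\,y\vee y\rangle\in\theta$. Where a side is regular, the join-square collapses ($x\vee x=x$ or $y\vee y=y$); where a side satisfies $x=x_2$ (respectively $y=y_2$), the pair $\langle x_2,y_2\rangle\in\theta$ already lives at the desired endpoint and, after the same join-squaring, shares its other endpoint with $\langle x\vee x, y\vee y\rangle$. Combining the two observations by transitivity of $\theta$ yields $\langle x,y\rangle\in\theta$ in every case, completing the proof.
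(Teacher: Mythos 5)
Your construction of $\theta_{1}$ is sound: the splicing argument for transitivity genuinely works because, by Lemma \ref{Lemma 2}(2), any two witnesses $y_{1},y_{2}$ in a common $\chi$-class satisfy $y_{1}\vee y_{1}=y_{2}\vee y_{2}$, so applying $u\mapsto u\vee u$ inside $\theta$ joins the two chains at a common element. The gap is in the sentence ``the argument for $\theta_{2}$ is identical.'' It is not: a $\tau$-class is either a singleton or the whole of $\mathscr{R}(Q)$, and two distinct regular witnesses $y_{1},y_{2}\in\mathscr{R}(Q)$ in the middle class satisfy $y_{1}\vee y_{1}=y_{1}\neq y_{2}=y_{2}\vee y_{2}$, so the squaring trick produces two $\theta$-chains with \emph{different} endpoints and nothing splices them. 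This is not merely a missing argument; the relation $\theta_{2}$ you define can genuinely fail to be transitive. Take $\mathbf{Q}=\mathbf{A}$ from Example \ref{Example A} and $\theta=\chi^{\mathbf{A}}$, whose classes are $\{0\},\{a,e\},\{f,b\},\{1\}$, while the $\tau$-classes are $R=\mathscr{R}(A)=\{0,a,f,1\}$, $\{e\}$, $\{b\}$. Then $\langle \{e\},R\rangle\in\theta_{2}$ (witnessed by $\langle e,a\rangle\in\theta$) and $\langle R,\{b\}\rangle\in\theta_{2}$ (witnessed by $\langle f,b\rangle\in\theta$), but $\langle\{e\},\{b\}\rangle\notin\theta_{2}$, since the only admissible witnesses are $e$ and $b$ themselves and $\langle e,b\rangle\notin\theta$. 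So your $\theta_{2}$ is not even an equivalence relation, and the lemma is not proved.

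The defect is repairable but not for free. If you replace $\theta_{2}$ by the congruence it generates (here: by its transitive closure, which is automatically compatible with the operations since $\mathbf{Q}/\tau$ is flat), the forward implication survives, but your reverse implication no longer does as written: it relies on extracting a \emph{direct} witness pair $\langle x_{2},y_{2}\rangle\in\theta$ with $x_{2}\in x/\tau$, $y_{2}\in y/\tau$, and after closure only a chain of such witnesses exists. One must then argue that any reduced $\theta_{2}$-chain from $x/\tau$ to $y/\tau$ passes through the class $\mathscr{R}(Q)$ at most once, that the segments between singleton classes are literal $\theta$-chains, and that the entry and exit witnesses $r_{1},r_{2}\in\mathscr{R}(Q)$ satisfy $r_{1}\,\theta\,(x\vee x)$ and $r_{2}\,\theta\,(y\vee y)$ by your squaring observation, so that the hypothesis $\langle x\vee x,y\vee y\rangle\in\theta$ coming from $\theta_{1}$ splices the chain inside $\theta$. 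That chain analysis is the missing idea; without it the proof does not close. (For comparison, the paper also takes the direct-image route, pushing $\theta$ into $\mathbf{Q}/\chi\times\mathbf{Q}/\tau$ and projecting, and is itself terse exactly at the point where your argument breaks.)
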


\begin{proof}
Let $\mathbf{Q}$ be a QB-algebra and $\theta$ be a congruence on $\mathbf{Q}$. Define a binary relation $\theta'=$$\{\langle \langle x/\chi , x/\tau\rangle, \\ \langle y/\chi , y/\tau\rangle \rangle \,|\, \langle x,y \rangle \in \theta \}$. Then $\theta'$ is a congruence on $\mathbf{Q}/\chi \times \mathbf{Q}/\tau$ such that $\langle \langle x/\chi , x/\tau\rangle, \langle y/\chi , y/\tau\rangle \rangle \in \theta'$ iff $\langle x,y\rangle \in \theta$. Moreover, we define two binary relations $\theta_{1}$ on $\mathbf{Q}/\chi$ and $\theta_{2}$ on $\mathbf{Q}/\tau$ by $\langle x/ \chi , y/ \chi \rangle \in \theta_{1}\;\mathrm{iff}\; \langle \langle x/ \chi,x/\tau \rangle , \langle y/ \chi,y/\tau \rangle \rangle \in \theta'$ and $\langle x/ \tau , y/ \tau \rangle \in \theta_{2} \;\mathrm{iff}\; \langle \langle x/ \chi,x/\tau \rangle , \langle y/ \chi,y/\tau \rangle \rangle\in \theta'$, respectively. Then $\theta_{1}$ is the congruences on $\mathbf{Q}/\chi$ and  $\theta_{2}$ is the congruence on $\mathbf{Q}/\tau$. In addition, we have that $\langle x,y \rangle \in \theta$ iff $\langle \langle x/ \chi,x/\tau \rangle , \langle y/ \chi,y/\tau \rangle \rangle \in \theta'$ iff $\langle \langle x/\chi ,x/\tau \rangle,  \langle y/\chi ,y/\tau \rangle \rangle \in \theta_{1} \times \theta_{2}$.
\end{proof}

\begin{lemma}\label{Lemma 7}
Let $\mathbf{Q}$ be a QB-algebra and $\mathbf{Q_{0}}$ be a subalgebra of $\mathbf{Q}$. Then we have $\chi ^{\mathbf{Q_{0}}}=\chi ^{\mathbf{Q}} \cap Q_{0}^{2}$ and $\tau ^{\mathbf{Q_{0}}}=\tau ^{\mathbf{Q}} \cap Q_{0}^{2}$.
\end{lemma}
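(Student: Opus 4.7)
The plan is to reduce both equalities to the fact that a subalgebra $\mathbf{Q_{0}}$ of $\mathbf{Q}$ inherits all its operations from $\mathbf{Q}$, so any equation in $\vee$, $\wedge$, or ${}^{\ast}$ involving elements of $Q_{0}$ evaluates identically whether computed in $\mathbf{Q_{0}}$ or in $\mathbf{Q}$.

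First, for $\chi$, I would invoke Lemma \ref{Lemma 2}(2), which characterizes $\chi$ purely by the equation $x \vee x = y \vee y$. For any $x,y \in Q_{0}$, the value of $x \vee x$ in $\mathbf{Q_{0}}$ coincides with its value in $\mathbf{Q}$ (and similarly for $y \vee y$), so $\langle x, y\rangle \in \chi^{\mathbf{Q_{0}}}$ iff $x, y \in Q_{0}$ and $x \vee x = y \vee y$ in $\mathbf{Q}$, which is exactly $\langle x, y\rangle \in \chi^{\mathbf{Q}} \cap Q_{0}^{2}$.

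Second, for $\tau$, the key auxiliary fact is $\mathscr{R}(Q_{0}) = \mathscr{R}(Q) \cap Q_{0}$. This follows because regularity is defined by $x \vee x = x$, and whether this identity holds for $x \in Q_{0}$ is independent of the ambient algebra. With this in hand, I would unpack the definition of $\tau$ on both sides: $\langle x, y\rangle \in \tau^{\mathbf{Q_{0}}}$ means $x, y \in Q_{0}$ with $x = y$ or $x, y \in \mathscr{R}(Q_{0})$, while $\langle x, y\rangle \in \tau^{\mathbf{Q}} \cap Q_{0}^{2}$ means $x, y \in Q_{0}$ with $x = y$ or $x, y \in \mathscr{R}(Q)$. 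The $\subseteq$ direction is immediate from $\mathscr{R}(Q_{0}) \subseteq \mathscr{R}(Q)$, and the $\supseteq$ direction uses that if $x, y \in Q_{0}$ are both regular in $\mathbf{Q}$, then by the identification above they are both in $\mathscr{R}(Q_{0})$.

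There is no substantial obstacle here; both parts are essentially bookkeeping. The only point requiring a moment's attention is the observation that the notion of regularity, being defined by an equation in the signature alone, is absolute between a subalgebra and its ambient algebra — this is what makes $\mathscr{R}(Q_{0}) = \mathscr{R}(Q) \cap Q_{0}$ hold and drives the second equality.
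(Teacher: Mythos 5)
Your proof is correct and follows essentially the same route as the paper: both use the equational characterization $x \vee x = y \vee y$ of $\chi$ (Lemma \ref{Lemma 2}(2)) together with the fact that a subalgebra inherits its operations from the ambient algebra. The paper dispatches the $\tau$ case with a bare ``similarly,'' whereas you make explicit the one fact that makes it work, namely $\mathscr{R}(Q_{0}) = \mathscr{R}(Q) \cap Q_{0}$; this is a welcome clarification rather than a deviation.
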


\begin{proof}
Let $\mathbf{Q}$ be a QB-algebra and $\mathbf{Q_{0}}$ be a subalgebra of $\mathbf{Q}$. For any $\langle x,y\rangle \in \chi^{\mathbf{Q_{0}}}$, we have $x\vee x=y\vee y$ and $x,y \in Q_{0}$. Note that $\mathbf{Q_{0}}$ is a subalgebra of $\mathbf{Q}$, then $x,y \in Q$ and $\langle x,y\rangle \in \chi^{\mathbf{Q}}$, it turns out that $\langle x,y\rangle \in \chi^{\mathbf{Q}} \cap Q_{0}^{2}$, so $\chi^{\mathbf{Q_{0}}}\subseteq \chi^{\mathbf{Q}}\cap Q_{0}^{2}$. Conversely, for any $\langle x,y\rangle \in \chi^{\mathbf{Q}} \cap Q_{0}^{2}$, we have $\langle x,y \rangle \in Q_{0}^{2}$ and $x\vee x=y\vee y$, it turns out that $\langle x,y\rangle \in \chi^{\mathbf{Q_{0}}}$, so $\chi^{\mathbf{Q}}\cap Q_{0}^{2}\subseteq \chi^{\mathbf{Q_{0}}}$. Thus $\chi^{\mathbf{Q_{0}}}=\chi^{\mathbf{Q}}\cap Q_{0}^{2}$. Similarly, we can show that $\tau^{\mathbf{Q_{0}}}=\tau^{\mathbf{Q}}\cap Q_{0}^{2}$.
\end{proof}

\begin{lemma}\label{Lem subalgebra}
Let $\mathbf{Q}$ be a QB-algebra and $\mathbf{Q_0}$ be a subalgebra of $\mathbf{Q}$. Then $\mathbf{Q_0}/\chi^{\mathbf{Q_0}}$ is a subalgebra of $\mathbf{Q}/\chi^{\mathbf{Q}}$ and $\mathbf{Q_0}/\tau^{\mathbf{Q_0}}$ is a subalgebra of $\mathbf{Q}/\tau^{\mathbf{Q}}$.
\end{lemma}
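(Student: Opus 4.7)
The plan is to exhibit a natural QB-embedding of $\mathbf{Q_0}/\chi^{\mathbf{Q_0}}$ into $\mathbf{Q}/\chi^{\mathbf{Q}}$ (and analogously for $\tau$), which lets us identify the quotient of the subalgebra with a subalgebra of the quotient. The key ingredient is already in hand: Lemma~\ref{Lemma 7} tells us that $\chi^{\mathbf{Q_0}}=\chi^{\mathbf{Q}}\cap Q_0^{2}$ and $\tau^{\mathbf{Q_0}}=\tau^{\mathbf{Q}}\cap Q_0^{2}$, so the two equivalences on $Q_0$ coincide.

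First I would define the map $\iota_\chi:Q_0/\chi^{\mathbf{Q_0}}\longrightarrow Q/\chi^{\mathbf{Q}}$ by $\iota_\chi(x/\chi^{\mathbf{Q_0}})=x/\chi^{\mathbf{Q}}$ for every $x\in Q_0$. Well-definedness is immediate from the inclusion $\chi^{\mathbf{Q_0}}\subseteq\chi^{\mathbf{Q}}$: if $\langle x,y\rangle\in\chi^{\mathbf{Q_0}}$ then $x\vee x=y\vee y$ holds in $\mathbf{Q}$ too, so $\langle x,y\rangle\in\chi^{\mathbf{Q}}$. Injectivity is where Lemma~\ref{Lemma 7} is essential: if $\iota_\chi(x/\chi^{\mathbf{Q_0}})=\iota_\chi(y/\chi^{\mathbf{Q_0}})$ with $x,y\in Q_0$, then $\langle x,y\rangle\in\chi^{\mathbf{Q}}\cap Q_0^{2}=\chi^{\mathbf{Q_0}}$.

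Next I would verify that $\iota_\chi$ preserves the operations. Because $\mathbf{Q_0}$ is a subalgebra of $\mathbf{Q}$, the operations $\vee,\wedge,{}^\ast,0,1$ agree on $Q_0$ with those of $\mathbf{Q}$, and the quotient operations on both sides are computed using the same representative choices. Thus for $x,y\in Q_0$,
\begin{align*}
\iota_\chi\bigl((x/\chi^{\mathbf{Q_0}})\vee^{\mathbf{Q_0}/\chi^{\mathbf{Q_0}}}(y/\chi^{\mathbf{Q_0}})\bigr)&=(x\vee y)/\chi^{\mathbf{Q}}\\
&=(x/\chi^{\mathbf{Q}})\vee^{\mathbf{Q}/\chi^{\mathbf{Q}}}(y/\chi^{\mathbf{Q}}),
\end{align*}
and the analogous identities for $\wedge$, ${}^\ast$, $0$, $1$ follow in the same way. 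Therefore $\iota_\chi$ is an injective QB-homomorphism, so its image $\iota_\chi(Q_0/\chi^{\mathbf{Q_0}})$ is a subalgebra of $\mathbf{Q}/\chi^{\mathbf{Q}}$ isomorphic to $\mathbf{Q_0}/\chi^{\mathbf{Q_0}}$, allowing us to identify $\mathbf{Q_0}/\chi^{\mathbf{Q_0}}$ with a subalgebra of $\mathbf{Q}/\chi^{\mathbf{Q}}$.

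The argument for $\tau$ is essentially a verbatim repetition with $\iota_\tau(x/\tau^{\mathbf{Q_0}})=x/\tau^{\mathbf{Q}}$, again using $\tau^{\mathbf{Q_0}}=\tau^{\mathbf{Q}}\cap Q_0^{2}$ from Lemma~\ref{Lemma 7} for well-definedness and injectivity. Since there is no real obstacle beyond bookkeeping, the main thing to be careful about is to always invoke Lemma~\ref{Lemma 7} when going from $\chi^{\mathbf{Q}}$ (or $\tau^{\mathbf{Q}}$) back to $\chi^{\mathbf{Q_0}}$ (resp.\ $\tau^{\mathbf{Q_0}}$) on elements known to lie in $Q_0$; that is precisely the step where the hypothesis ``$\mathbf{Q_0}$ is a subalgebra of $\mathbf{Q}$'' is used rather than merely a subset.
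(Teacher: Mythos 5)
Your proposal is correct and follows essentially the same route as the paper: both rest on Lemma~\ref{Lemma 7} ($\chi^{\mathbf{Q_0}}=\chi^{\mathbf{Q}}\cap Q_0^{2}$, and likewise for $\tau$) together with the observation that the quotient operations are computed on representatives that agree because $\mathbf{Q_0}$ is a subalgebra. The only difference is presentational: you package the identification as an explicit injective QB-homomorphism $\iota_\chi$, whereas the paper asserts the inclusion of classes directly; your version is if anything the more careful one, since a class $x/\chi^{\mathbf{Q_0}}$ is literally only a subset of $x/\chi^{\mathbf{Q}}$ and the identification you make explicit is exactly what the paper's argument implicitly relies on.
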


\begin{proof}
We need to show that $Q_0/\chi^{\mathbf{Q_0}} \subseteq Q/\chi^{\mathbf{Q}}$ and the operations of $\mathbf{Q_0}/\chi^{\mathbf{Q_0}}$ are operations of $\mathbf{Q}/\chi^{\mathbf{Q}}$ restricted to $Q_0/\chi^{\mathbf{Q_0}}$. For any $x \in Q_0$ and $y \in x/\chi^{\mathbf{Q_0}}$, since $\chi^{\mathbf{Q_0}} = \chi^{\mathbf{Q}} \cap Q_0^{2}$ by Lemma \ref{Lemma 7}, we have $\langle x, y \rangle \in \chi^{\mathbf{Q}}$ and $y \in x/\chi^{\mathbf{Q}}$, it follows that $x/\chi^{\mathbf{Q_0}} \subseteq x/\chi^{\mathbf{Q}}$. Since $\mathbf{Q_0}$ is a subalgebra of $\mathbf{Q}$, we have $Q_0/\chi^{\mathbf{Q_0}} \subseteq Q/\chi^{\mathbf{Q}}$. For any $x/\chi^{\mathbf{Q_0}}, y/\chi^{\mathbf{Q_0}} \in Q_0/\chi^{\mathbf{Q_0}}$, we have $x/\chi^{\mathbf{Q_0}} \vee^{\mathbf{Q_0}/\chi} y/\chi^{\mathbf{Q_0}} = (x \vee y)/\chi^{\mathbf{Q_0}}$. Since $\mathbf{Q_0}$ is a subalgebra of $\mathbf{Q}$, we have $x/\chi^{\mathbf{Q_0}} \vee^{\mathbf{Q_0}/\chi} y/\chi^{\mathbf{Q_0}} = (x \vee y)/\chi^{\mathbf{Q_0}} = (x \vee y)/\chi^{\mathbf{Q}} = x/\chi^{\mathbf{Q}} \vee^{\mathbf{Q}/\chi} y/\chi^{\mathbf{Q}}$. Similarly, we can show that $x/\chi^{\mathbf{Q_0}} \wedge^{\mathbf{Q_0}/\chi} y/\chi^{\mathbf{Q_0}} = (x \wedge y)/\chi^{\mathbf{Q_0}} = (x \wedge y)/\chi^{\mathbf{Q}} = x/\chi^{\mathbf{Q}} \wedge^{\mathbf{Q}/\chi} y/\chi^{\mathbf{Q}}$. Meanwhile, $(x/\chi^{\mathbf{Q_0}})^{*{\mathbf{Q_0}/\chi}} = x^{*}/\chi^{\mathbf{Q_0}} = x^{*}/\chi^{\mathbf{Q}}$, $0/\chi^{\mathbf{Q_0}} = 0/\chi^{\mathbf{Q}}$, and $1/\chi^{\mathbf{Q_0}} = 1/\chi^{\mathbf{Q}}$. Thus $\mathbf{Q_0}/\chi^{\mathbf{Q_0}}$ is a subalgebra of $\mathbf{Q}/\chi^{\mathbf{Q}}$. Similarly, we can show that $\mathbf{Q_0}/\tau^{\mathbf{Q_0}}$ is a subalgebra of $\mathbf{Q}/\tau^{\mathbf{Q}}$.
\end{proof}

\begin{theorem}\label{QB CEP}
The variety $\mathbb{QB}$ has the CEP.
\end{theorem}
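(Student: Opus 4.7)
The plan is to reduce the CEP for $\mathbb{QB}$ to the CEPs already available for its two building-block varieties via the subdirect representation of Theorem~\ref{Theorem 1}. Fix $\mathbf{Q}\in\mathbb{QB}$, a subalgebra $\mathbf{Q_0}$, and a congruence $\theta^{\mathbf{Q_0}}$ on $\mathbf{Q_0}$. My first step is to apply Lemma~\ref{Lemma 6} to $\mathbf{Q_0}$, obtaining congruences $\theta_1^{\mathbf{Q_0}}$ on the Boolean algebra $\mathbf{Q_0}/\chi^{\mathbf{Q_0}}$ and $\theta_2^{\mathbf{Q_0}}$ on the flat QB-algebra $\mathbf{Q_0}/\tau^{\mathbf{Q_0}}$ whose product captures $\theta^{\mathbf{Q_0}}$.

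Next, by Lemma~\ref{Lem subalgebra}, $\mathbf{Q_0}/\chi^{\mathbf{Q_0}}$ is a subalgebra of $\mathbf{Q}/\chi^{\mathbf{Q}}\in\mathbb{B}$, and $\mathbf{Q_0}/\tau^{\mathbf{Q_0}}$ is a subalgebra of $\mathbf{Q}/\tau^{\mathbf{Q}}\in\mathbb{FQB}$. Invoking Lemma~\ref{B CEP} in the Boolean factor and Lemma~\ref{Lemma 5} in the flat factor, I extend $\theta_1^{\mathbf{Q_0}}$ to a congruence $\theta_1^{\mathbf{Q}}$ on $\mathbf{Q}/\chi^{\mathbf{Q}}$ with $\theta_1^{\mathbf{Q_0}}=\theta_1^{\mathbf{Q}}\cap(Q_0/\chi^{\mathbf{Q_0}})^{2}$, and similarly $\theta_2^{\mathbf{Q_0}}$ to $\theta_2^{\mathbf{Q}}$ on $\mathbf{Q}/\tau^{\mathbf{Q}}$.

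I then pull the product congruence $\theta_1^{\mathbf{Q}}\times\theta_2^{\mathbf{Q}}$ back along the embedding $\mathbf{Q}\hookrightarrow\mathbf{Q}/\chi^{\mathbf{Q}}\times\mathbf{Q}/\tau^{\mathbf{Q}}$ of Theorem~\ref{Theorem 1} by setting
$$
\theta^{\mathbf{Q}}=\bigl\{\langle x,y\rangle\in Q^{2}\;\big|\;\langle x/\chi^{\mathbf{Q}},y/\chi^{\mathbf{Q}}\rangle\in\theta_1^{\mathbf{Q}}\text{ and }\langle x/\tau^{\mathbf{Q}},y/\tau^{\mathbf{Q}}\rangle\in\theta_2^{\mathbf{Q}}\bigr\}.
$$
That $\theta^{\mathbf{Q}}$ is a congruence on $\mathbf{Q}$ is routine, since the two canonical projections are QB-homomorphisms and the defining conditions are preserved componentwise under $\vee,\wedge,{}^{*}$.

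The genuinely careful step is verifying $\theta^{\mathbf{Q}}\cap Q_0^{2}=\theta^{\mathbf{Q_0}}$. The inclusion $\theta^{\mathbf{Q_0}}\subseteq\theta^{\mathbf{Q}}\cap Q_0^{2}$ follows immediately from Lemma~\ref{Lemma 6} applied to $\mathbf{Q_0}$ together with $\theta_i^{\mathbf{Q_0}}\subseteq\theta_i^{\mathbf{Q}}$ for $i=1,2$. For the reverse inclusion, suppose $\langle x,y\rangle\in\theta^{\mathbf{Q}}\cap Q_0^{2}$. Using Lemmas~\ref{Lemma 7} and~\ref{Lem subalgebra}, the $\chi^{\mathbf{Q_0}}$-class of $x$ sits naturally inside the $\chi^{\mathbf{Q}}$-class of $x$ and the identification of $\mathbf{Q_0}/\chi^{\mathbf{Q_0}}$ as a subalgebra of $\mathbf{Q}/\chi^{\mathbf{Q}}$ lets me read the condition $\langle x/\chi^{\mathbf{Q}},y/\chi^{\mathbf{Q}}\rangle\in\theta_1^{\mathbf{Q}}$ as a statement in $(Q_0/\chi^{\mathbf{Q_0}})^{2}$, which by the extension equality yields $\langle x/\chi^{\mathbf{Q_0}},y/\chi^{\mathbf{Q_0}}\rangle\in\theta_1^{\mathbf{Q_0}}$; the parallel argument for $\tau$ gives $\langle x/\tau^{\mathbf{Q_0}},y/\tau^{\mathbf{Q_0}}\rangle\in\theta_2^{\mathbf{Q_0}}$. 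Reassembling via Lemma~\ref{Lemma 6} places $\langle x,y\rangle$ in $\theta^{\mathbf{Q_0}}$. I expect the main obstacle to be exactly this bookkeeping, which requires being scrupulous about the distinction between the classes $x/\chi^{\mathbf{Q_0}}$ and $x/\chi^{\mathbf{Q}}$ (and analogously for $\tau$); once the subalgebra identifications of Lemma~\ref{Lem subalgebra} are used cleanly, everything else is mechanical.
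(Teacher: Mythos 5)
Your proposal is correct and follows essentially the same route as the paper: decompose via Lemma~\ref{Lemma 6}, extend in each factor using the CEP for $\mathbb{B}$ and $\mathbb{FQB}$ (Lemmas~\ref{B CEP} and~\ref{Lemma 5}) together with the subalgebra identifications of Lemmas~\ref{Lemma 7} and~\ref{Lem subalgebra}, and intersect back. If anything, your explicit pullback of $\theta_1^{\mathbf{Q}}\times\theta_2^{\mathbf{Q}}$ along the embedding of Theorem~\ref{Theorem 1} to a congruence on $\mathbf{Q}$ itself, with the verification that $\theta^{\mathbf{Q}}\cap Q_0^{2}=\theta^{\mathbf{Q_0}}$, makes the final step more complete than the paper's version, which stops at the identity $\theta_1\times\theta_2=(\theta_1'\times\theta_2')\cap((Q_{0}/\chi^{\mathbf{Q_0}})\times(Q_{0}/\tau^{\mathbf{Q_0}}))$.
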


\begin{proof}
For any $\mathbf{Q}\in \mathbb{QB}$, $\mathbf{Q_{0}}$ is a subalgebra of $\mathbf{Q}$, and $\theta$ is a congruence on $\mathbf{Q_{0}}$. There exist a congruence $\theta_{1}$ on $\mathbf{Q_{0}}/\chi^{\mathbf{Q_{0}}}$ and a congruence $\theta_{2}$ on $\mathbf{Q_{0}}/\tau^{\mathbf{Q_{0}}}$ such that for any $x,y\in Q_{0}$, $\langle \langle x/\chi^{\mathbf{Q_{0}}} ,x/\tau^{\mathbf{Q_{0}}} \rangle, \langle y/\chi^{\mathbf{Q_{0}}} ,y/\tau^{\mathbf{Q_{0}}} \rangle \rangle \in \theta_{1} \times \theta_{2}$ iff  $\langle x,y \rangle \in \theta$ by Lemma \ref{Lemma 6}. Since the variety $\mathbb{B}$ has the CEP by Lemma \ref{B CEP} and the variety $\mathbb{FQB}$ also has the CEP by Lemma \ref{Lemma 5}, we have a congruence $\theta_{1}'$ on $\mathbf{Q}/ \chi ^{\mathbf{Q}}$ such that $\theta_{1}=\theta_{1}' \cap (Q_{0}/\chi ^{\mathbf{Q_{0}}})$ and a congruence $\theta_{2}'$ on $\mathbf{Q}/ \tau ^{\mathbf{Q}}$ such that $\theta_{2}=\theta_{2}' \cap (Q_{0}/\tau ^{\mathbf{Q_{0}}})$. Now, we only show that $\theta_{1} \times \theta_{2}=(\theta_{1}'\times \theta_{2}')\cap ((Q_{0}/ \chi ^{\mathbf{Q_{0}}}) \times (Q_{0}/ \tau ^{\mathbf{Q_{0}}}))$. For any $\langle \langle x/ \chi ^{\mathbf{Q_{0}}},x/ \tau ^{\mathbf{Q_{0}}} \rangle , \langle y/ \chi ^{\mathbf{Q_{0}}},y/ \tau ^{\mathbf{Q_{0}}} \rangle \rangle \in \theta_{1} \times \theta_{2} $, we have $\langle x/ \chi ^{\mathbf{Q_{0}}} , y/ \chi ^{\mathbf{Q_{0}}} \rangle \in \theta_{1}$ and $\langle x/ \tau ^{\mathbf{Q_{0}}} , y/ \tau ^{\mathbf{Q_{0}}} \rangle \in \theta_{2}$. Since $\theta_{1}=\theta_{1}' \cap (Q_{0}/\chi ^{\mathbf{Q_{0}}})$ and $\theta_{2}=\theta_{2}' \cap (Q_{0}/\tau ^{\mathbf{Q_{0}}})$, we have $\langle x/ \chi ^{\mathbf{Q_{0}}} , y/ \chi ^{\mathbf{Q_{0}}} \rangle \in \theta_{1}' \cap (Q_{0}/\chi ^{\mathbf{Q_{0}}})$ and $\langle x/ \tau ^{\mathbf{Q_{0}}} , y/ \tau ^{\mathbf{Q_{0}}} \rangle \in \theta_{2}' \cap (Q_{0}/\tau ^{\mathbf{Q_{0}}})$, it implies that $\langle x/ \chi ^{\mathbf{Q_{0}}} , y/ \chi ^{\mathbf{Q_{0}}} \rangle \in \theta_{1}'$, $\langle x/ \chi ^{\mathbf{Q_{0}}}, y/ \chi ^{\mathbf{Q_{0}}} \rangle \in Q_{0}/\chi ^{\mathbf{Q_{0}}}$, $\langle x/ \tau ^{\mathbf{Q_{0}}}, y/  \tau ^{\mathbf{Q_{0}}} \rangle \in \theta_{2}'$, and $\langle x/ \tau ^{\mathbf{Q_{0}}} , y/ \tau ^{\mathbf{Q_{0}}} \rangle \in Q_{0}/\tau ^{\mathbf{Q_{0}}}$, which means that $\langle \langle x/ \chi ^{\mathbf{Q_{0}}},x/ \tau ^{\mathbf{Q_{0}}} \rangle , \langle y/ \chi ^{\mathbf{Q_{0}}}, \\ y/ \tau ^{\mathbf{Q_{0}}} \rangle \rangle \in \theta_{1}' \times \theta_{2}' $ and $\langle \langle x/ \chi ^{\mathbf{Q_{0}}},x/ \tau ^{\mathbf{Q_{0}}} \rangle , \langle y/ \chi ^{\mathbf{Q_{0}}},y/ \tau ^{\mathbf{Q_{0}}} \rangle \rangle \in ((Q_{0}/\chi ^{\mathbf{Q_{0}}}) \times (Q_{0}/\tau ^{\mathbf{Q_{0}}})) $, so $\langle \langle x/ \chi ^{\mathbf{Q_{0}}},\\ x/ \tau ^{\mathbf{Q_{0}}} \rangle , \langle y/ \chi ^{\mathbf{Q_{0}}},y/ \tau ^{\mathbf{Q_{0}}} \rangle \rangle \in (\theta_{1}'\times \theta_{2}')\cap ((Q_{0}/ \chi ^{\mathbf{Q_{0}}}) \times (Q_{0}/ \tau ^{\mathbf{Q_{0}}})) $. Hence $\theta_{1} \times \theta_{2} \subseteq (\theta_{1}'\times \theta_{2}')\cap ((Q_{0}/ \chi ^{\mathbf{Q_{0}}}) \times (Q_{0}/ \tau ^{\mathbf{Q_{0}}}))$. Conversely, for any $\langle \langle x/ \chi ^{\mathbf{Q_{0}}},x/ \tau ^{\mathbf{Q_{0}}} \rangle, \langle y/ \chi ^{\mathbf{Q_{0}}},y/ \tau ^{\mathbf{Q_{0}}} \rangle \rangle  \in (\theta_{1}'\times \theta_{2}')\cap ((Q_{0}/ \chi ^{\mathbf{Q_{0}}}) \times (Q_{0}/ \tau ^{\mathbf{Q_{0}}}))$, we have $\langle \langle x/ \chi ^{\mathbf{Q_{0}}},x/ \tau ^{\mathbf{Q_{0}}} \rangle ,  \langle y/ \chi ^{\mathbf{Q_{0}}},y/ \tau ^{\mathbf{Q_{0}}} \rangle \rangle \in \theta_{1}'\times \theta_{2}'$ and $\langle \langle x/ \chi ^{\mathbf{Q_{0}}},x/ \tau ^{\mathbf{Q_{0}}} \rangle , \\ \langle y/ \chi ^{\mathbf{Q_{0}}},y/ \tau ^{\mathbf{Q_{0}}} \rangle \rangle \in ((Q_{0}/ \chi ^{\mathbf{Q_{0}}}) \times (Q_{0}/ \tau ^{\mathbf{Q_{0}}}))$, it turns out that $\langle  x/ \chi ^{\mathbf{Q_{0}}}, y/ \chi ^{\mathbf{Q_{0}}} \rangle \in \theta_{1}'$, $\langle  x/ \chi ^{\mathbf{Q_{0}}}, y/ \chi ^{\mathbf{Q_{0}}} \rangle \in Q_{0}/ \chi ^{\mathbf{Q_{0}}}$, $\langle  x/ \tau ^{\mathbf{Q_{0}}}, y/ \tau ^{\mathbf{Q_{0}}} \rangle$ $\in \theta_{2}'$, and $\langle  x/ \tau ^{\mathbf{Q_{0}}}, y/ \tau ^{\mathbf{Q_{0}}} \rangle \in Q_{0}/ \tau ^{\mathbf{Q_{0}}}$, which means that $\langle  x/ \chi ^{\mathbf{Q_{0}}},y/ \chi ^{\mathbf{Q_{0}}} \rangle \in \theta_{1}'\cap (Q_{0}/ \chi ^{\mathbf{Q_{0}}})$ and $\langle  x/ \tau ^{\mathbf{Q_{0}}}, y/ \tau ^{\mathbf{Q_{0}}} \rangle \in \theta_{2}'\cap (Q_{0}/ \tau ^{\mathbf{Q_{0}}}) $. Since $\theta_{1}=\theta_{1}' \cap (Q_{0}/\chi ^{\mathbf{Q_{0}}})$ and $\theta_{2}=\theta_{2}' \cap (Q_{0}/\tau ^{\mathbf{Q_{0}}})$, we have $\langle  x/ \chi ^{\mathbf{Q_{0}}}, y/ \chi ^{\mathbf{Q_{0}}} \rangle \in \theta_{1}$ and $\langle  x/ \tau ^{\mathbf{Q_{0}}}, y/ \tau ^{\mathbf{Q_{0}}} \rangle \in \theta_{2}$, it turns out that $\langle \langle x/ \chi ^{\mathbf{Q_{0}}},x/ \tau ^{\mathbf{Q_{0}}} \rangle , \\ \langle y/ \chi ^{\mathbf{Q_{0}}},y/ \tau ^{\mathbf{Q_{0}}} \rangle \rangle \in \theta_{1} \times \theta_{2} $, so $(\theta_{1}'\times \theta_{2}')\cap ((Q_{0}/ \chi ^{\mathbf{Q_{0}}})\times (Q_{0}/ \tau ^{\mathbf{Q_{0}}})) \subseteq \theta_{1} \times \theta_{2} $. Therefore, $\theta_{1} \times \theta_{2}=(\theta_{1}'\times \theta_{2}')\cap ((Q_{0}/ \chi ^{\mathbf{Q_{0}}}) \times (Q_{0}/ \tau ^{\mathbf{Q_{0}}}))$.
Thus the variety $\mathbb{QB}$ has the CEP.
\end{proof}

Below, we use an example to illustrate the process of congruence extension.

\begin{example}\label{Example E} Let $\mathbf{4}$  be defined in Example \ref{Example 4} and  $\mathbf{6} = \langle \{0,a,e,f,b,1\} ; \vee ,\wedge ,^\ast, 0, 1 \rangle$ be defined as follows\\
$$
\begin{array}{c|cccccc} \vee & 0 & a & e & f & b & 1 \\ \hline
0 & 0 & 0 & 0 & 1 & 1 & 1 \\a & 0 & 0 & 0 & 1 & 1 & 1 \\e & 0 & 0 & 0 & 1 & 1 & 1 \\f & 1 & 1 & 1 & 1 & 1 & 1 \\b & 1 & 1 & 1 & 1 & 1 & 1 \\1 & 1 & 1 & 1 & 1 & 1 & 1
\end{array}
\qquad
\begin{array}{c|cccccc} \wedge & 0 & a & e & f & b & 1 \\ \hline
0 & 0 & 0 & 0 & 0 & 0 & 0 \\a & 0 & 0 & 0 & 0 & 0 & 0 \\e & 0 & 0 & 0 & 0 & 0 & 0 \\f & 0 & 0 & 0 & 1 & 1 & 1 \\b & 0 & 0 & 0 & 1 & 1 & 1 \\1 & 0 & 0 & 0 & 1 & 1 & 1
\end{array}
\qquad
\begin{array}{c|c} x & x^{*} \\ \hline
0 & 1 \\a & b \\e & f \\f & e \\b & a \\1 & 0
\end{array}
$$

Then it is easy to see that $\mathbf{4}$ is a subalgebra of $\mathbf{6}$ and $\mathbf{6}$ is embedded into the direct product $\mathbf{2}\times\mathbf{F_{5}}$. Put $\theta=\nabla$ the largest congruence on $\mathbf{4}$. Then following Lemma $\ref{Lemma 6}$, we have that
$\theta_{1}=\{\langle 0/\chi^{\mathbf{4}},0/\chi^{\mathbf{4}}\rangle,\langle 0/\chi^{\mathbf{4}},$ $1/\chi^{\mathbf{4}}\rangle,\langle 1/\chi^{\mathbf{4}},0/\chi^{\mathbf{4}}\rangle,\langle 1/\chi^{\mathbf{4}},1/\chi^{\mathbf{4}}\rangle\}$ where $0/\chi^{\mathbf{4}}=\{0,a\}$ and $1/\chi^{\mathbf{4}}=\{b,1\}$, is a congruence on $\mathbf{4}/\chi^{\mathbf{4}}$, and $\theta_{2}=\{\langle 0/\tau^{\mathbf{4}},0/\tau^{\mathbf{4}}\rangle,\langle 0/\tau^{\mathbf{4}},a/\tau^{\mathbf{4}}\rangle,$ $\langle a/\tau^{\mathbf{4}},0/\tau^{\mathbf{4}}\rangle,\langle 0/\tau^{\mathbf{4}},b/\tau^{\mathbf{4}}\rangle,$ $\langle b/\tau^{\mathbf{4}},0/\tau^{\mathbf{4}}\rangle, $ $\langle a/\tau^{\mathbf{4}},a/\tau^{\mathbf{4}}\rangle,\langle b/\tau^{\mathbf{4}},b/\tau^{\mathbf{4}}\rangle,\langle a/\tau^{\mathbf{4}},b/\tau^{\mathbf{4}}\rangle,\langle b/\tau^{\mathbf{4}},a/\tau^{\mathbf{4}}\rangle\}$ where $0/\tau^{\mathbf{4}}=\{0,1\}$, $a/\tau^{\mathbf{4}}=\{a\}$, and $b/\tau^{\mathbf{4}}=\{b\}$,  is a congruence on $\mathbf{4}/\tau^{\mathbf{4}}$. According to Lemma \ref{Lemma 5} and Lemma \ref{Lem subalgebra}, we get that $\theta_{2}'=\theta_{2}\cup\{\langle e,e\rangle,\langle f,f\rangle\}$ is a congruence on $\mathbf{6}/\tau^{\mathbf{6}}$. Because $\mathbf{4}/\chi^{\mathbf{4}}$ and $\mathbf{6}/\chi^{\mathbf{6}}$ are isomorphic, we denote $\theta_{1}'=\{\langle 0/\chi^{\mathbf{6}},0/\chi^{\mathbf{6}}\rangle,$ $\langle 0/\chi^{\mathbf{6}},1/\chi^{\mathbf{6}}\rangle, \langle 1/\chi^{\mathbf{6}},0/\chi^{\mathbf{6}}\rangle,\langle 1/\chi^{\mathbf{6}},1/\chi^{\mathbf{6}}\rangle\}$ where $0/\chi^{\mathbf{6}}=\{0,a,e\}$ and $1/\chi^{\mathbf{6}}=\{b,f,1\}$ and then $\theta_{1}'$ is a congruence on $\mathbf{6}/\chi^{\mathbf{6}}$. Hence $\theta_{1}'\times \theta_{2}'$ is a congruence on $\mathbf{2}\times\mathbf{F_{5}}$ and $\theta_{1}\times \theta_{2}=(\theta_{1}'\times \theta_{2}')\cap (\mathbf{2}\times \mathbf{F_{3}})$.
\end{example}

Since the subalgebra $\mathscr{R}(\mathbf{Q})$ of a QB-algebra $\mathbf{Q}$ is a Boolean algebra and the variety $\mathbb{QB}$ has the CEP, in the following, we present the specific method for extending a congruence on  $\mathscr{R}(\mathbf{Q})$ to a congruence on $\mathbf{Q}$ in order to investigate more relationships between Boolean algebras and quasi-Boolean algebras. For the sake of clarity, we also discuss flat QB-algebras and non-flat QB-algebras, respectively.

\begin{lemma}\label{xy equ}
Let $\mathbf{Q}$ be a non-flat QB-algebra and $\theta$ be a congruence on $\mathscr{R}(\mathbf{Q})$. Then $\theta_{x,y}=\theta \cup \Delta \cup \{\langle x,y\rangle, \langle y,x\rangle, \langle x^{\ast},y^{\ast}\rangle, \langle y^{\ast},x^{\ast}\rangle \}$ where $x\in \mathscr{IR}(Q)$ and $y\in cl(x\vee x)\backslash\mathscr{R}(Q)$ is an equivalence relation on $\mathbf{Q}$.
\end{lemma}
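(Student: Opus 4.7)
The plan is to verify the three axioms of an equivalence relation in turn. Reflexivity is immediate from $\Delta \subseteq \theta_{x,y}$. Symmetry is immediate because $\theta$, being a congruence on $\mathscr{R}(\mathbf{Q})$, is symmetric; $\Delta$ is symmetric; and the four new pairs have been included together with their reverses. All the substantive content therefore lies in transitivity.

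The guiding observation for transitivity is that the added pairs involve only irregular elements, while $\theta \subseteq \mathscr{R}(Q)^{2}$ involves only regular ones, so the two pieces cannot be chained through a common middle element. To formalize this I would first show that ${}^{*}$ preserves irregularity: if $u^{*}$ were regular, then $u^{*} = u^{*} \vee u^{*} = (u \wedge u)^{*}$ by (QB4), and applying ${}^{*}$ together with (QB5) would give $u \wedge u = u$, making $u$ regular. Applied to $x \in \mathscr{IR}(Q)$ and to $y \in cl(x \vee x) \setminus \mathscr{R}(Q)$, this shows that all four elements $x, y, x^{*}, y^{*}$ are irregular.

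Now set $T = \theta \cup \Delta$ and $R = \{\langle x, y\rangle, \langle y, x\rangle, \langle x^{*}, y^{*}\rangle, \langle y^{*}, x^{*}\rangle\}$, so $\theta_{x,y} = T \cup R$. The relation $T$ is already an equivalence on $Q$, extending the equivalence $\theta$ on $\mathscr{R}(Q)$ by the identity elsewhere. Given $\langle a, b\rangle, \langle b, c\rangle \in T \cup R$, I would case-split on which component each pair comes from. If both lie in $T$, transitivity of $T$ finishes. If one lies in $\theta$ and the other in $R$, then $b$ would have to be simultaneously regular (from the $\theta$ side) and irregular (from the $R$ side), which is impossible; so that case is vacuous. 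The remaining mixed cases are handled by $\Delta$ acting as identity, and everything else reduces to transitivity inside $R$.

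The main obstacle is transitivity inside $R$, which holds precisely when $\{x, y\}$ and $\{x^{*}, y^{*}\}$ are disjoint two-element blocks. The degenerate case $x = y$ collapses $R$ into $\Delta$ and is trivial, so assume $x \neq y$; then I need to exclude $y = x^{*}$ (equivalently $x = y^{*}$). From $y \in cl(x \vee x)$ we have $y \vee y = x \vee x$, so $y = x^{*}$ would force $x^{*} \vee x^{*} = x \vee x$, making the regular element $r = x \vee x$ satisfy $r = r^{*}$; this contradicts Lemma \ref{neq}, since $\mathbf{Q}$ is non-flat. Once this disjointness is confirmed, $R \cup \Delta$ restricted to $\{x, y, x^{*}, y^{*}\}$ is exactly the equivalence with blocks $\{x, y\}$ and $\{x^{*}, y^{*}\}$, so transitivity inside $R$ follows and the proof is complete.
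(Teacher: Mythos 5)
Your proof is correct and shares the paper's overall skeleton: transitivity is verified by cases on whether each of the two pairs lies in $\theta$, in $\Delta$, or among the four added pairs $R$, with the mixed $\theta$--$R$ cases vacuous because the middle element would have to be simultaneously regular and irregular. Where you genuinely differ is in the $R$-internal compositions: you first prove $y \neq x^{\ast}$ (from $y\vee y = x\vee x$, the identity $x^{\ast}\vee x^{\ast}=(x\vee x)^{\ast}$, and Lemma \ref{neq} applied to the regular element $x\vee x$), so that $\{x,y\}$ and $\{x^{\ast},y^{\ast}\}$ are disjoint blocks and $R\cup\Delta$ restricted to them is transparently an equivalence. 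The paper instead enumerates the sub-cases directly and tolerates the possible coincidence $y=x^{\ast}$, checking that the composed pair then collapses into $\Delta$ (its case (3.3)), and invokes non-flatness only to exclude $y=y^{\ast}$ (its case (3.4)). Your route is tidier and also makes explicit two facts the paper uses silently: that $^{\ast}$ preserves irregularity, so all of $x,y,x^{\ast},y^{\ast}$ are irregular (which is what kills the mixed cases), and that the two blocks are disjoint. Both arguments are complete.
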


\begin{proof}
Obviously, $\theta_{x,y}$ is reflexive and symmetric. For any $\langle a,b\rangle \in \theta_{x,y}$ and $\langle b,c\rangle \in \theta_{x,y}$, we distinguish several cases to discuss.

(1) Suppose that $\langle a,b\rangle \in \theta$. If $\langle b,c\rangle \in \theta$, since $\theta$ is a congruence on $\mathscr{R}(\mathbf{Q})$, we have $\langle a,c\rangle \in \theta$, so $\langle a,c\rangle \in \theta_{x,y}$. If $\langle b,c\rangle \in \Delta$, then we have $b=c$, it turns out that $\langle a,c\rangle \in \theta$, so $\langle a,c\rangle \in \theta_{x,y}$. If $\langle b,c\rangle\in \{\langle x,y\rangle, \langle y,x\rangle, \langle x^{\ast},y^{\ast}\rangle, \langle y^{\ast},x^{\ast}\rangle \}$, then we have $b\in \mathscr{R}(Q)$ and $b\in \mathscr{IR}(Q)$, this is impossible.

(2) Suppose that $\langle a,b\rangle \in \Delta$. Then we have $a=b$. If $\langle b,c\rangle\in \theta\cup \Delta$, then $\langle a,c\rangle\in\theta\cup \Delta\subseteq \theta_{x,y}$. If $\langle b,c\rangle\in \{\langle x,y\rangle, \langle y,x\rangle, \langle x^{\ast},y^{\ast}\rangle, \langle y^{\ast},x^{\ast}\rangle \}$, then we also have $b\in \mathscr{R}(Q)$ and $b\in \mathscr{IR}(Q)$, this is impossible.

(3) Suppose that $\langle a,b\rangle \in \{\langle x,y\rangle, \langle y,x\rangle, \langle x^{\ast},y^{\ast}\rangle, \langle y^{\ast},x^{\ast}\rangle \}$. If $\langle b,c\rangle\in \theta$, then we have $b\in \mathscr{IR}(Q)$ and $b\in \mathscr{R}(Q)$, this is impossible. If $\langle b,c\rangle \in \Delta$, then we have $b=c$ and then $\langle a,c\rangle \in \{\langle x,y\rangle, \langle y,x\rangle, \langle x^{\ast},\\ y^{\ast}\rangle, \langle y^{\ast},x^{\ast}\rangle \}$, so $\langle a,c\rangle \in \theta_{x,y}$. If $\langle b,c\rangle \in \{\langle x,y\rangle, \langle y,x\rangle, \langle x^{\ast},y^{\ast}\rangle, \langle y^{\ast},x^{\ast}\rangle \}$, we can distinguish several cases. (3.1) If $\langle a,b\rangle=\langle x,y\rangle$ and $\langle b,c\rangle=\langle x,y\rangle$, then we have $b=y=x$, it turns out that $\langle a,c\rangle=\langle x,x\rangle\in \Delta\subseteq \theta_{x,y}$. (3.2) If $\langle a,b\rangle=\langle x,y\rangle$ and $\langle b,c\rangle=\langle y,x\rangle$, then we have $\langle a,c\rangle=\langle x,x\rangle\in \Delta\subseteq \theta_{x,y}$. (3.3) If $\langle a,b\rangle=\langle x,y\rangle$ and $\langle b,c\rangle=\langle x^{\ast},y^{\ast}\rangle$, then we have $b=y=x^{\ast}$, it turns out that $x=(x^{\ast})^{\ast}=y^{\ast}$, so $\langle a,c\rangle=\langle x,y^{\ast}\rangle=\langle x,x\rangle\in \Delta\subseteq \theta_{x,y}$. (3.4) If $\langle a,b\rangle=\langle x,y\rangle$ and $\langle b,c\rangle=\langle y^{\ast},x^{\ast}\rangle$, then we have $b=y=y^{\ast}$, this is impossible by Lemma \ref{neq}. The rest cases can be proved similarly.

Thus $\theta_{x,y}$ is an equivalence relation on $\mathbf{Q}$.
\end{proof}

\begin{proposition}\label{xy con}
Let $\mathbf{Q}$ be a non-flat QB-algebra and $\theta$ be a congruence on $\mathscr{R}(\mathbf{Q})$. Then $\theta_{x,y}=\theta \cup \Delta \cup \{\langle x,y\rangle, \langle y,x\rangle, \langle x^{\ast},y^{\ast}\rangle, \langle y^{\ast},x^{\ast}\rangle \}$ where $x\in \mathscr{IR}(Q)$ and $y\in cl(x\vee x)\backslash\mathscr{R}(Q)$ is a congruence on $\mathbf{Q}$.
\end{proposition}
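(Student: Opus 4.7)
The plan is to lean on Lemma \ref{xy equ}, which already gives that $\theta_{x,y}$ is an equivalence relation, so all that remains is compatibility with $\vee$, $\wedge$, and $^\ast$. Since $\vee$ and $\wedge$ are commutative by (QL1), the standard reduction shows it suffices to verify \emph{single-argument} compatibility: for every $\langle a,b\rangle \in \theta_{x,y}$ and every $u\in Q$, that $\langle a\vee u,b\vee u\rangle$, $\langle a\wedge u,b\wedge u\rangle$, and $\langle a^\ast,b^\ast\rangle$ all lie in $\theta_{x,y}$; the full binary compatibility then follows by transitivity.

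The decisive observation is a small substitution lemma: whenever $\langle a,b\rangle\in \chi$ (i.e.\ $a\vee a=b\vee b$), one has $a\vee u=b\vee u$ and $a\wedge u=b\wedge u$ for every $u\in Q$. This is immediate from (QL4), since $u\vee a=u\vee (a\vee a)=u\vee (b\vee b)=u\vee b$ and dually for $\wedge$. Crucially, the four newly adjoined pairs $\langle x,y\rangle$, $\langle y,x\rangle$, $\langle x^\ast,y^\ast\rangle$, $\langle y^\ast,x^\ast\rangle$ all lie in $\chi$: the first two because $y\in cl(x\vee x)$ gives $y\vee y=x\vee x$, the latter two by Lemma \ref{clx}. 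So for these pairs, the substitution lemma collapses $\langle a\vee u,b\vee u\rangle$ and $\langle a\wedge u,b\wedge u\rangle$ straight into $\Delta$.

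With this in hand, I would run the case split on $\langle a,b\rangle\in\theta_{x,y}$. The diagonal case is trivial. For $\langle a,b\rangle\in\theta$, both $a$ and $b$ are regular; I rewrite $a\vee u=a\vee (u\vee u)$ and $b\vee u=b\vee (u\vee u)$ via (QL4), note that $u\vee u\in\mathscr{R}(Q)$, and apply the hypothesis that $\theta$ is a congruence on $\mathscr{R}(\mathbf{Q})$ to obtain $\langle a\vee u,b\vee u\rangle\in\theta\subseteq\theta_{x,y}$; the $\wedge$-case is symmetric, and the $^\ast$-case uses that $a^\ast\in\mathscr{R}(Q)$ whenever $a\in\mathscr{R}(Q)$, which follows from (QB4). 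For the four newly adjoined pairs, the substitution lemma handles $\vee$ and $\wedge$ as described, and the $^\ast$-image is absorbed back into the same four-element set — this is precisely the reason for symmetrizing over starred components in the definition of $\theta_{x,y}$.

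The main obstacle I anticipate is simply the $^\ast$-closure bookkeeping: one must check that componentwise star sends $\{\langle x,y\rangle,\langle y,x\rangle,\langle x^\ast,y^\ast\rangle,\langle y^\ast,x^\ast\rangle\}$ into itself, which relies on (QB5) via $x^{\ast\ast}=x$ and $y^{\ast\ast}=y$; adjoining only $\langle x,y\rangle$ and $\langle y,x\rangle$ would not produce a congruence, so the four-pair bundle is forced by $^\ast$-closure. Once these checks are in place and transitivity is invoked to promote single-argument compatibility to the binary operations, $\theta_{x,y}$ is a congruence on $\mathbf{Q}$.
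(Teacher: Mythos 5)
Your proof is correct. It differs from the paper's mainly in organization rather than in substance: the paper verifies compatibility by a full two-sided case analysis --- nine cases, pairing each of $\theta$, $\Delta$, and the four adjoined pairs against each other --- and in every case rewrites $c$ as $c\vee c$ (resp.\ $c\wedge c$) and uses $c\vee c=d\vee d$ to land back in $\theta$ or $\Delta$. You instead isolate a substitution lemma (that $\chi$-related elements give \emph{literally equal} results under $a\mapsto a\vee u$ and $a\mapsto a\wedge u$, via (QL4) and (QL5)), observe that all four adjoined pairs lie in $\chi$, and then reduce binary compatibility to single-argument compatibility using commutativity and the transitivity supplied by Lemma \ref{xy equ}. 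This cuts the analysis from nine cases to three and makes transparent why the adjoined pairs cause no trouble: substituting them into any $\vee$- or $\wedge$-context collapses into $\Delta$. The computational core --- the absorption $u\vee a=u\vee(a\vee a)$, the identities $x\vee x=y\vee y$ and $x^{\ast}\vee x^{\ast}=y^{\ast}\vee y^{\ast}$ (the latter from Lemma \ref{clx}), the regularity of $a^{\ast}$ for regular $a$, and the $^{\ast}$-closure of the four-pair set via (QB5) --- is identical in both arguments. Your version is shorter and arguably cleaner; the paper's is more self-contained in that it never invokes the general principle that one-sided compatibility plus transitivity yields two-sided compatibility.
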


\begin{proof}
By Lemma \ref{xy equ}, we have that $\theta_{x,y}$ is an equivalence relation on $\mathbf{Q}$. For any $\langle a,b\rangle\in \theta_{x,y}$ and $\langle c,d\rangle\in \theta_{x,y}$, we distinguish several cases to discuss.

(1) Suppose that $\langle a,b\rangle\in \theta$. Since $\theta$ is a congruence on $\mathscr {R}({\mathbf{Q}})$, we have $\langle a^{\ast},b^{\ast}\rangle\in \theta\subseteq \theta_{x,y}$. Moreover, if $\langle c,d\rangle\in \theta$, then we have $\langle a\vee c,b\vee d\rangle\in \theta\subseteq \theta_{x,y}$ and $\langle a\wedge c,b\wedge d\rangle\in \theta\subseteq \theta_{x,y}$. If $\langle c,d\rangle\in \Delta$, then we have $c=d$. Since $a\vee c, b\vee c,a\wedge c,\mathrm{and}\; b\wedge c\in \mathscr{R}(Q)$, we have $\langle a\vee c,b\vee d\rangle=\langle a\vee c,b\vee c\rangle\in \theta\subseteq \theta_{x,y}$ and $\langle a\wedge c,b\wedge d\rangle=\langle a\wedge c,b\wedge c\rangle\in \theta\subseteq \theta_{x,y}$. If $\langle c,d\rangle\in \{\langle x,y\rangle, \langle y,x\rangle, \langle x^{\ast},y^{\ast}\rangle, \langle y^{\ast},x^{\ast}\rangle \}$, then we have $c\vee c=x\vee x=y\vee y=d\vee d$ or $c\vee c=x^{\ast}\vee x^{\ast}=y^{\ast}\vee y^{\ast}=d\vee d$, it turns out that $\langle a\vee c,b\vee d\rangle=\langle a\vee (c\vee c),b\vee (d\vee d)\rangle=\langle a\vee (c\vee c),b\vee (c\vee c)\rangle\in \theta\subseteq \theta_{x,y}$ and $\langle a\wedge c,b\wedge d\rangle=\langle a\wedge (c\wedge c),b\wedge (d\wedge d)\rangle=\langle a\wedge (c\wedge c),b\wedge (c\wedge c)\rangle\in \theta\subseteq \theta_{x,y}$.

(2) Suppose that $\langle a,b\rangle\in \Delta$. Then we have $a=b$ and then $\langle a^{\ast},b^{\ast}\rangle\in \Delta\subseteq \theta_{x,y}$. Moreover, if $\langle c,d\rangle\in \theta$, since $a\vee c, a\vee d,a\wedge c,\;\mathrm{and}\;a\wedge d\in \mathscr{R}(Q)$, we have $\langle a\vee c,b\vee d\rangle=\langle a\vee c,a\vee d\rangle\in \theta\subseteq \theta_{x,y}$ and $\langle a\wedge c,b\wedge d\rangle=\langle a\wedge c,a\wedge d\rangle\in \theta\subseteq \theta_{x,y}$. If $\langle c,d\rangle\in \Delta$, then we have $c=d$, it implies that $\langle a\vee c,b\vee d\rangle=\langle a\vee c,a\vee c\rangle\in \Delta\subseteq \theta_{x,y}$. If $\langle c,d\rangle\in \{\langle x,y\rangle, \langle y,x\rangle, \langle x^{\ast},y^{\ast}\rangle, \langle y^{\ast},x^{\ast}\rangle \}$, then we have $c\vee c=d\vee d$ and $c\wedge c=d\wedge d$, it turns out that $\langle a\vee c, b\vee d\rangle=\langle a\vee (c\vee c),b\vee (d\vee d)\rangle=\langle a\vee (c\vee c),a\vee (c\vee c)\rangle\in \Delta \subseteq \theta_{x,y}$ and $\langle a\wedge c, b\wedge d\rangle=\langle a\wedge (c\wedge c),b\wedge (d\wedge d)\rangle=\langle a\wedge (c\wedge c),a\wedge (c\wedge c)\rangle\in \Delta \subseteq \theta_{x,y}$.

(3) Suppose that $\langle a,b\rangle\in \{\langle x,y\rangle, \langle y,x\rangle, \langle x^{\ast},y^{\ast}\rangle, \langle y^{\ast},x^{\ast}\rangle \}$. Then we have $a\vee a=b\vee b$ and $a\wedge a=b\wedge b$. Obviously, $\langle a^{\ast},b^{\ast}\rangle\in \{\langle x,y\rangle, \langle y,x\rangle, \langle x^{\ast},y^{\ast}\rangle,  \langle y^{\ast},x^{\ast}\rangle \}\subseteq \theta_{x,y}$. Moreover, if $\langle c,d\rangle\in \theta$, then we have $\langle a\vee c,b\vee d\rangle=\langle (a\vee a)\vee c,(b\vee b)\vee d\rangle=\langle (a\vee a)\vee c,(a\vee a)\vee d\rangle=\langle a\vee c,a\vee d\rangle\in \theta\subseteq \theta_{x,y}$ and $\langle a\wedge c,b\wedge d\rangle=\langle (a\wedge a)\wedge c,(b\wedge b)\wedge d\rangle=\langle (a\wedge a)\wedge c,(a\wedge a)\wedge d\rangle=\langle a\wedge c,a\wedge d\rangle\in \theta\subseteq \theta_{x,y}$. If $\langle c,d\rangle\in \Delta$, then we have $c=d$, it turns out that $\langle a\vee c,b\vee d\rangle=\langle (a\vee a)\vee c,(b\vee b)\vee d\rangle=\langle (a\vee a)\vee c,(a\vee a)\vee c\rangle\in \Delta\subseteq \theta_{x,y}$ and $\langle a\wedge c,b\wedge d\rangle=\langle (a\wedge a)\wedge c,(a\wedge a)\wedge c\rangle\in \Delta\subseteq \theta_{x,y}$. If $\langle c,d\rangle\in \{\langle x,y\rangle, \langle y,x\rangle, \langle x^{\ast},y^{\ast}\rangle, \langle y^{\ast},x^{\ast}\rangle \}$, then we have $c\vee c=d\vee d$ and $c\wedge c=d\wedge d$, so $\langle a\vee c,b\vee d\rangle=\langle (a\vee a)\vee (c\vee c),(b\vee b)\vee (d\vee d)\rangle=\langle (a\vee a)\vee (c\vee c),(a\vee a)\vee (c\vee c)\rangle\in \Delta\subseteq \theta_{x,y}$ and $\langle a\wedge c,b\wedge d\rangle=\langle (a\wedge a)\wedge (c\wedge c),(a\wedge a)\wedge (c\wedge c)\rangle\in \Delta\subseteq \theta_{x,y}$.

Hence $\theta_{x,y}$ is a congruence on $\mathbf{Q}$.
\end{proof}

\begin{remark} Let $\mathbf{Q}$ be a non-flat QB-algebra. Then the congruence $\theta_{x,y}$ defined in Proposition \ref{xy con} is the least congruence on $\mathbf{Q}$
which contains $\theta$ and $\langle x,y\rangle$.
\end{remark}

\begin{corollary}
Let $\mathbf{Q}$ be a non-flat QB-algebra. Then

\emph{(1)} if $\theta$ is a congruence on $\mathscr{R}(\mathbf{Q})$. Then $\theta_{x,y}$ defined in Proposition \ref{xy con} is an atom in $\emph{Con}(\mathbf{Q})$, where $\emph{Con}(\mathbf{Q})$ is the lattice of all congruences on $\mathbf{Q}$.

\emph{(2)} for any $\rho\in \emph{Con}(\mathbf{Q})$, there exists a congruence $\theta=\rho\cap \mathscr{R}(Q)^{2}$ on $\mathscr{R}(\mathbf{Q})$ such that $\rho$ is generated by $\{\theta_{x,y}\,|\, x,y\in X\}$ for a suitable subset $X\subseteq \mathscr{IR}(Q)$.
\end{corollary}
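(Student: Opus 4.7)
The plan is to prove the two parts in sequence, leveraging the explicit form of $\theta_{x,y}$ given in Proposition~\ref{xy con} together with the cloud decomposition from Proposition~\ref{non flat number}.

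For part~(1), I want to show $\theta_{x,y}$ covers the base congruence $\theta \cup \Delta_{Q}$ in $\mathrm{Con}(\mathbf{Q})$. Given any congruence $\rho$ with $\theta\cup\Delta_{Q}\subsetneq\rho\subseteq\theta_{x,y}$, I would pick an arbitrary pair $\langle a,b\rangle \in \rho\setminus(\theta\cup\Delta_{Q})$; the explicit form of $\theta_{x,y}$ forces it into $\{\langle x,y\rangle,\langle y,x\rangle,\langle x^{*},y^{*}\rangle,\langle y^{*},x^{*}\rangle\}$, and closure of $\rho$ under symmetry and under the involution $^{*}$ then sweeps in all four pairs, yielding $\rho=\theta_{x,y}$.

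For part~(2), I would set $\theta:=\rho\cap\mathscr{R}(Q)^{2}$, automatically a congruence on $\mathscr{R}(\mathbf{Q})$ by restriction, and choose $X:=\{x\in\mathscr{IR}(Q)\mid \langle x,y\rangle\in \rho \text{ for some } y\in cl(x\vee x)\setminus\mathscr{R}(Q)\}$. Each admissible pair from $X\times X$ then yields a congruence $\theta_{x,y}\subseteq\rho$, so the congruence $\rho^{\sharp}$ generated by $\{\theta_{x,y}\mid x,y\in X\}$ satisfies $\rho^{\sharp}\subseteq\rho$. For the reverse inclusion, I would decompose an arbitrary pair $\langle a,b\rangle\in\rho$ by cases on the regularity of $a,b$: regular--regular pairs land in $\theta$ and hence in every $\theta_{x,y}$; irregular--irregular pairs within a common cloud land directly in $\theta_{a,b}$; mixed or cross-cloud pairs are handled by chaining through the regular representatives $a\vee a,b\vee b$ via transitivity, using $\theta$ to bridge the regular intermediaries.

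The main obstacle will be the minimality step in part~(1), namely verifying that compatibility of $\rho$ with $\vee,\wedge$ generates no new equivalences beyond the four listed pairs. This is precisely where the hypothesis $y\in cl(x\vee x)\setminus\mathscr{R}(Q)$ is essential: since $y\vee y=x\vee x$ and $y\wedge y=x\wedge x$, every pair of the form $\langle x\vee z,y\vee z\rangle$ reduces via (QL4) to $\langle(x\vee x)\vee z,(x\vee x)\vee z\rangle\in\Delta_{Q}$, and analogously for $\wedge$. A secondary difficulty in part~(2) is the cross-regularity chaining; here the subdirect decomposition of Theorem~\ref{Theorem 1} can be invoked to separate the Boolean and flat contributions, making the case analysis tractable.
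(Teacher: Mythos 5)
The paper states this corollary without proof, so there is nothing to compare against directly; judged on its own merits, your argument for part (1) is essentially sound but your argument for part (2) has a genuine gap (and, as written, the corollary's part (2) itself is problematic). For part (1): your reading of ``atom'' as ``covers $\theta\cup\Delta$'' is the only one under which the claim can hold (if $\theta\neq\Delta_{\mathscr{R}(Q)}$ then $\theta\cup\Delta$ is a congruence strictly between $\Delta_{Q}$ and $\theta_{x,y}$, so $\theta_{x,y}$ is not an atom of $\mathrm{Con}(\mathbf{Q})$ in the literal sense), and with that reading your argument works: you should just record explicitly that $\theta\cup\Delta$ is itself a congruence on $\mathbf{Q}$, that $x\neq y$ must be assumed (otherwise $\theta_{x,y}=\theta\cup\Delta$), and that the four extra pairs are swept in from any one of them by symmetry and $^{\ast}$-closure, using Lemma \ref{neq} to see that $cl(x\vee x)$ and $cl(x^{\ast}\vee x^{\ast})$ are distinct clouds.

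For part (2) the proposal breaks down at two points. First, your set $X=\{x\in\mathscr{IR}(Q)\mid\langle x,y\rangle\in\rho\text{ for some }y\in cl(x\vee x)\setminus\mathscr{R}(Q)\}$ equals all of $\mathscr{IR}(Q)$ by reflexivity, and even after excluding $y=x$ it does not guarantee $\theta_{x,y}\subseteq\rho$ for \emph{every} pair $x,y\in X$ lying in a common cloud; the corollary requires the generating family to be indexed by all pairs from $X$, so $\rho^{\sharp}\subseteq\rho$ fails in general. Second, and more seriously, the ``chaining through the regular representatives $a\vee a,\,b\vee b$'' step cannot work: every $\theta_{x,y}$ is contained in $\mathscr{R}(Q)^{2}\cup\Delta\cup\mathscr{IR}(Q)^{2}$, the join of congruences is the transitive closure of their union, and a transitive chain of such pairs never crosses from $\mathscr{R}(Q)$ to $\mathscr{IR}(Q)$. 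Hence the congruence generated by any family $\{\theta_{x,y}\}$ contains no pair $\langle a,b\rangle$ with $a$ regular and $b$ irregular, whereas such pairs do occur in congruences on $\mathbf{Q}$ --- the paper's own Example \ref{E con} exhibits a congruence on $\mathbf{6}$ with $\theta_{\mathscr{R}(6)\times\mathscr{IR}(6)}\neq\emptyset$, and Proposition \ref{theta123} shows this mixed component is an essential third summand. So part (2) cannot be proved as stated; any correct version must either restrict to congruences with empty mixed part or enlarge the generating set to account for $\theta_{\mathscr{R}(Q)\times\mathscr{IR}(Q)}$.
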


Below we will see the case of flat QB-algebras.

\begin{lemma}\label{xy equ2}
Let $\mathbf{Q}$ be a flat QB-algebra. Then for any $x,y\in \mathscr{IR}(Q)$ and $x\neq y$,
\begin{eqnarray*}
\theta_{x,y}= \left\{
          \begin{array}{ll}
      \Delta \cup \{\langle x,y\rangle, \langle y,x\rangle \}, \;\mathrm{if}\;x=x^{\ast}\mathrm{and}\; y= y^{\ast};\\
      \Delta \cup \{\langle x,y\rangle, \langle y,x\rangle, \langle x^{\ast},y^{\ast}\rangle, \langle y^{\ast},x^{\ast}\rangle,\langle y,y^{\ast}\rangle,\langle y^{\ast},y\rangle \},\;\mathrm{if}\; x=x^{\ast}\; \mathrm{and}\; y\neq y^{\ast};\\
      \Delta \cup \{\langle x,y\rangle, \langle y,x\rangle, \langle x^{\ast},y^{\ast}\rangle, \langle y^{\ast},x^{\ast}\rangle,\langle x,x^{\ast}\rangle,\langle x^{\ast},x\rangle \},\;\mathrm{if}\; x\neq x^{\ast}\; \mathrm{and}\; y= y^{\ast};\\
      \Delta \cup \{\langle x,y\rangle, \langle y,x\rangle, \langle x^{\ast},y^{\ast}\rangle, \langle y^{\ast},x^{\ast}\rangle,\langle x,x^{\ast}\rangle,\langle x^{\ast},x\rangle,\langle y,y^{\ast}\rangle,\langle y^{\ast},y\rangle,\langle x,y^{\ast}\rangle,\\ \langle y^{\ast},x\rangle,\langle x^{\ast},y\rangle,\langle y,x^{\ast}\rangle \},\;\mathrm{if}\; x\neq x^{\ast}\; \mathrm{and}\; y\neq y^{\ast},
          \end{array}
        \right.
\end{eqnarray*}
is an equivalence relation on $\mathbf{Q}$.
\end{lemma}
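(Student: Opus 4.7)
The plan is to reduce all three equivalence-relation axioms to a single structural observation. Reflexivity is immediate from $\Delta \subseteq \theta_{x,y}$, and symmetry is immediate because each listed non-diagonal pair appears together with its converse. Thus the only real work is transitivity, and I will handle it uniformly by showing that, in every one of the four sub-cases, $\theta_{x,y}$ has the shape $\Delta \cup ((E\times E)\setminus \Delta)$ for a concrete finite set $E \subseteq Q$. A relation of this shape is automatically an equivalence relation: its unique non-trivial class is $E$, while every element outside $E$ sits in its own singleton class.

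I will take $E = \{x,y\}$ when $x=x^{\ast}$ and $y=y^{\ast}$; $E = \{x,y,y^{\ast}\}$ when $x=x^{\ast}$ and $y\neq y^{\ast}$; $E = \{x,x^{\ast},y\}$ in the symmetric case; and $E = \{x,x^{\ast},y,y^{\ast}\}$ when $x\neq x^{\ast}$ and $y\neq y^{\ast}$. In each case the verification reduces to a direct count: after making the identification $x^{\ast}=x$ (respectively $y^{\ast}=y$) in the cases where it applies, the listed non-diagonal pairs must coincide with the set of ordered pairs of distinct elements of $E$. For example, in the second case $x^{\ast}=x$ turns the displayed pair $\langle x^{\ast},y^{\ast}\rangle$ into $\langle x,y^{\ast}\rangle$, so the six listed pairs give exactly the six ordered pairs of distinct elements of the three-element set $\{x,y,y^{\ast}\}$; in the fourth case twelve listed ordered pairs match the twelve ordered pairs of distinct elements of a four-element set.

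The main subtlety I foresee is a potential collapse in the last sub-case when $x=y^{\ast}$ (equivalently $x^{\ast}=y$): then $E$ is really a two-element set and many of the twelve displayed pairs coincide pairwise. I would address this by noting that, under such identifications, the set of listed pairs degenerates gracefully to $(E\times E)\setminus \Delta$ for whatever $E$ actually is as a set, so the structural description (and hence the equivalence-relation conclusion) is uniform across degenerate and non-degenerate configurations. With this remark in place the lemma follows at once from the equivalence-class description of $\theta_{x,y}$, avoiding any explicit case analysis of transitivity chains of the style used in Lemma~\ref{xy equ}.
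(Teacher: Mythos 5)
Your proof is correct, and it organizes the argument differently from the paper. The paper proves only the fourth case (claiming the others follow), checks reflexivity and symmetry directly, and then disposes of transitivity by splitting into ``one of the two pairs is diagonal'' versus ``both pairs lie in $\tilde{\theta}$,'' with the latter left as ``easily verified.'' You instead identify each $\theta_{x,y}$ as $\Delta \cup \bigl((E\times E)\setminus \Delta\bigr) = \Delta \cup (E\times E)$ for an explicit block $E$ ($\{x,y\}$, $\{x,y,y^{\ast}\}$, $\{x,x^{\ast},y\}$, or $\{x,x^{\ast},y,y^{\ast}\}$), after which all three axioms are immediate since the equivalence classes are just $E$ together with singletons. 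This is the cleaner route: it replaces the unverified composition check by a one-line structural fact, and it makes the paper's implicit claim that the other three cases ``can be derived'' concrete. Your counting is also right where it matters: in the second and third cases the relevant three elements are automatically distinct (e.g.\ $x=y^{\ast}$ together with $x=x^{\ast}$ would force $x=y$), and in the fourth case the only possible collapse is $x=y^{\ast}$ (equivalently $x^{\ast}=y$), under which the four pairs such as $\langle x,y^{\ast}\rangle$ become diagonal and are absorbed into $\Delta$, so the description $\Delta\cup(E\times E)$ persists with the smaller $E=\{x,x^{\ast}\}$. The one thing worth stating explicitly if you write this up is the (easy) lemma that $\Delta\cup(E\times E)$ is an equivalence relation for any subset $E$; with that recorded, your argument is complete and arguably preferable to the paper's.
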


\begin{proof} We only prove the case that $x\neq x^{\ast}$ and $ y\neq y^{\ast}$. Other cases can be derived from it.
Denote $\tilde{\theta}=\{\langle x,y\rangle,\langle y,x\rangle, \langle x^{\ast},y^{\ast}\rangle, \langle y^{\ast}, x^{\ast}\rangle,\langle x,x^{\ast}\rangle,\langle x^{\ast},x\rangle,\langle y,y^{\ast}\rangle,\langle y^{\ast},y\rangle,\langle x,y^{\ast}\rangle, \langle y^{\ast},$ $x\rangle,\langle x^{\ast},y\rangle,\langle y, x^{\ast}\rangle\}$.
Obviously, $\theta_{x,y}$ is reflexive and symmetric.
For any $\langle a,b\rangle\in \theta_{x,y}$ and $\langle b,c\rangle\in \theta_{x,y}$,
if $\langle a,b\rangle\in \Delta$ or $\langle b,c\rangle\in \Delta$, then we have $a=b$ or $b=c$, it turns out that $\langle a,c\rangle\in \Delta \cup \tilde{\theta}$.
If $\langle a,b\rangle\in\tilde{\theta}$ and $\langle b,c\rangle\in\tilde{\theta}$, then it is easily verified that $\langle a,c\rangle\in \Delta \cup \tilde{\theta}$.
Thus $\theta_{x,y}$ is an equivalence relation on $\mathbf{Q}$.
\end{proof}

\begin{proposition}\label{xy con2}
Let $\mathbf{Q}$ be a flat QB-algebra. Then for any $x,y\in \mathscr{IR}(Q)$ and $x\neq y$,
\begin{eqnarray*}
\theta_{x,y}= \left\{
          \begin{array}{ll}
      \Delta \cup \{\langle x,y\rangle, \langle y,x\rangle \}, \;\mathrm{if}\;x=x^{\ast}\mathrm{and}\; y= y^{\ast};\\
      \Delta \cup \{\langle x,y\rangle, \langle y,x\rangle, \langle x^{\ast},y^{\ast}\rangle, \langle y^{\ast},x^{\ast}\rangle,\langle y,y^{\ast}\rangle,\langle y^{\ast},y\rangle \},\;\mathrm{if}\; x=x^{\ast}\; \mathrm{and}\; y\neq y^{\ast};\\
      \Delta \cup \{\langle x,y\rangle, \langle y,x\rangle, \langle x^{\ast},y^{\ast}\rangle, \langle y^{\ast},x^{\ast}\rangle,\langle x,x^{\ast}\rangle,\langle x^{\ast},x\rangle \},\;\mathrm{if}\; x\neq x^{\ast}\; \mathrm{and}\; y= y^{\ast};\\
      \Delta \cup \{\langle x,y\rangle, \langle y,x\rangle, \langle x^{\ast},y^{\ast}\rangle, \langle y^{\ast},x^{\ast}\rangle,\langle x,x^{\ast}\rangle,\langle x^{\ast},x\rangle,\langle y,y^{\ast}\rangle,\langle y^{\ast},y\rangle,\langle x,y^{\ast}\rangle,\\ \langle y^{\ast},x\rangle,\langle x^{\ast},y\rangle,\langle y,x^{\ast}\rangle \},\;\mathrm{if}\; x\neq x^{\ast}\; \mathrm{and}\; y\neq y^{\ast},
          \end{array}
        \right.
\end{eqnarray*}
is a congruence on $\mathbf{Q}$.
\end{proposition}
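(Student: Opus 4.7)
The plan is to reuse Lemma \ref{xy equ2}, which already gives that $\theta_{x,y}$ is an equivalence relation in each of the four cases, so the only remaining work is to verify compatibility with the three operations $\vee$, $\wedge$, and ${}^{\ast}$.

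Compatibility with the binary operations is essentially free. Since $\mathbf{Q}$ is flat, Lemma \ref{vee 0} gives $a\vee c=b\vee d=0$ and $a\wedge c=b\wedge d=0$ for any $a,b,c,d\in Q$. Hence for every pair $\langle a,b\rangle, \langle c,d\rangle\in \theta_{x,y}$ we obtain $\langle a\vee c,b\vee d\rangle=\langle 0,0\rangle\in \Delta\subseteq \theta_{x,y}$ and likewise $\langle a\wedge c,b\wedge d\rangle\in \theta_{x,y}$. So the only real content is closure under ${}^{\ast}$.

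For closure under ${}^{\ast}$ I would argue case by case, exactly mirroring the four cases in the statement. On the diagonal part $\Delta$ there is nothing to check since $a=b$ implies $a^{\ast}=b^{\ast}$. On the finite list of extra pairs, I would verify that applying ${}^{\ast}$ sends each listed pair to another listed pair, using only $x^{\ast\ast}=x$ together with whichever of the hypotheses $x=x^{\ast}$, $y=y^{\ast}$ is in force. For instance in the fourth case one checks $\langle x,y\rangle \mapsto \langle x^{\ast},y^{\ast}\rangle$, $\langle x,x^{\ast}\rangle\mapsto \langle x^{\ast},x\rangle$, $\langle x,y^{\ast}\rangle\mapsto \langle x^{\ast},y\rangle$, and so on; in the second case one observes that $\langle x,y\rangle\mapsto \langle x^{\ast},y^{\ast}\rangle=\langle x,y^{\ast}\rangle$, which is exactly one of the listed pairs, while $\langle y,y^{\ast}\rangle\mapsto \langle y^{\ast},y\rangle$ is also listed. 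The other two cases are analogous.

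The main obstacle, modest as it is, lies purely in the bookkeeping of the twelve pairs in the fourth case: one must confirm that the set of listed pairs is literally closed under coordinatewise application of ${}^{\ast}$. Because the set of pairs is designed to be symmetric in $\{x,x^{\ast}\}\cup\{y,y^{\ast}\}$, this closure is automatic, but I would present the verification explicitly to leave no ambiguity. No further algebraic identities beyond (QB5) and Lemma \ref{vee 0} are needed.
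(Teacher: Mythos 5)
Your proposal is correct and follows essentially the same route as the paper: equivalence comes from Lemma \ref{xy equ2}, compatibility with $\vee$ and $\wedge$ collapses to $\langle 0,0\rangle\in\Delta$ by Lemma \ref{vee 0}, and closure under ${}^{\ast}$ is a finite check on the listed pairs (which the paper dismisses as obvious and you spell out).
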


\begin{proof}
By Lemma \ref{xy equ2}, we have that $\theta_{x,y}$ is an equivalence relation on $\mathbf{Q}$. For any $\langle a,b\rangle\in \theta_{x,y}$, we have $\langle a^{\ast},b^{\ast}\rangle\in \theta_{x,y}$ obviously. For any $\langle a,b\rangle\in \theta_{x,y}$ and $\langle c,d\rangle\in \theta_{x,y}$, since $\mathbf{Q}$ is a flat QB-algebra, we have $\langle a\vee c,b\vee d\rangle=\langle a\wedge c,b\wedge d\rangle=\langle 0,0\rangle\in \Delta \subseteq \theta_{x,y}$ by Lemma \ref{vee 0}. Thus $\theta_{x,y}$ is a congruence on $\mathbf{Q}$.
\end{proof}

\begin{remark} Let $\mathbf{Q}$ be a flat QB-algebra. Then the congruence $\theta_{x,y}$ defined in Proposition \ref{xy con2} is the least congruence on $\mathbf{Q}$
which contains $\Delta$ and $\langle x,y\rangle$.
\end{remark}

Suppose that $\mathbf{Q}$ is a flat QB-algebra. Then, by Corollary \ref{Cor RQ}, we have $\mathscr{R}(Q)=\{0\}$ and hence $\{\langle 0,0\rangle\}$ is the unique congruence on
$\mathscr{R}({\mathbf{Q}})$. Therefore,  the extended congruence from Proposition \ref{xy con2} can be decomposed into two parts, one on $\mathscr{R}({\mathbf{Q}})$ and the other  on
$\mathscr{IR}({Q})$. The result is as follows.

\begin{proposition}\label{p5.3}
Let $\mathbf{Q}$ be a flat QB-algebra and $\Delta_{\mathscr{R}(Q)}$ be its only congruence on $\mathscr{R}({\mathbf{Q}})$. Then $\theta=\Delta_{\mathscr{R}(Q)}\cup \theta_{\mathscr{IR}(Q)}$ where $\theta_{\mathscr{IR}(Q)}$ is an equivalence relation on $\mathscr{IR}({Q})$ with $\theta_{\mathscr{IR}(Q)}^{\ast}=\theta_{\mathscr{IR}(Q)}$, is a congruence on $\mathbf{Q}$.
\end{proposition}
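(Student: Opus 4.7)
The plan is to verify the four conditions for $\theta$ to be a congruence on $\mathbf{Q}$: being an equivalence relation on $Q$, and being compatible with each of $\vee$, $\wedge$, and $^{*}$. The key simplifying observation is that by Corollary \ref{Cor RQ}, $\mathscr{R}(Q) = \{0\}$, and hence $\mathscr{IR}(Q) = Q \setminus \{0\}$, so the two components $\Delta_{\mathscr{R}(Q)}$ and $\theta_{\mathscr{IR}(Q)}$ live on disjoint pieces that together partition $Q$. From this, the union $\theta = \Delta_{\mathscr{R}(Q)} \cup \theta_{\mathscr{IR}(Q)}$ is automatically an equivalence relation on $Q$, being a disjoint union of equivalence relations on complementary subsets.

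For compatibility with the binary operations, I would invoke Lemma \ref{Lemma 3}, which forces $a \vee c = b \vee d = 0$ and $a \wedge c = b \wedge d = 0$ for any pairs $\langle a, b\rangle, \langle c, d\rangle \in \theta$. Thus $\langle a \vee c, b \vee d\rangle$ and $\langle a \wedge c, b \wedge d\rangle$ both equal $\langle 0, 0\rangle \in \Delta_{\mathscr{R}(Q)} \subseteq \theta$, so compatibility with $\vee$ and $\wedge$ is immediate and requires no assumption on $\theta_{\mathscr{IR}(Q)}$ beyond its being an equivalence.

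The remaining step is compatibility with $^{*}$, which I would handle by case analysis on which component contains $\langle a, b\rangle$. If $\langle a, b\rangle \in \Delta_{\mathscr{R}(Q)}$, then $a = b = 0$ and $a^{*} = b^{*} = 0^{*} = 1 = 0$ by flatness, so $\langle a^{*}, b^{*}\rangle \in \Delta_{\mathscr{R}(Q)} \subseteq \theta$. If $\langle a, b\rangle \in \theta_{\mathscr{IR}(Q)}$, then the hypothesis $\theta_{\mathscr{IR}(Q)}^{*} = \theta_{\mathscr{IR}(Q)}$ directly yields $\langle a^{*}, b^{*}\rangle \in \theta_{\mathscr{IR}(Q)} \subseteq \theta$. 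The only delicate point, really a sanity check rather than a genuine obstacle, is verifying that $^{*}$ maps $\mathscr{IR}(Q)$ into itself so the hypothesis $\theta_{\mathscr{IR}(Q)}^{*} = \theta_{\mathscr{IR}(Q)}$ is well posed; this follows from flatness together with involutivity (QB5), since if $a \in \mathscr{IR}(Q)$ and $a^{*} = 0$, then $a = a^{**} = 0^{*} = 1 = 0$, contradicting $a \neq 0$. With this observation in place, the four conditions combine to give the claim.
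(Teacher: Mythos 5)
Your proposal is correct and follows essentially the same route as the paper's proof: flatness (via Lemma \ref{vee 0}) collapses compatibility with $\vee$ and $\wedge$ to the pair $\langle 0,0\rangle$, and compatibility with $^{\ast}$ splits into the trivial case on $\Delta_{\mathscr{R}(Q)}$ and the hypothesis $\theta_{\mathscr{IR}(Q)}^{\ast}=\theta_{\mathscr{IR}(Q)}$ on the irregular part. Your observation that $\mathscr{R}(Q)=\{0\}$ makes $\theta$ a disjoint union of equivalence relations on a partition of $Q$ is a slightly cleaner packaging of the transitivity check than the paper's explicit case analysis, and the remark that $^{\ast}$ preserves $\mathscr{IR}(Q)$ is a worthwhile well-posedness check the paper leaves implicit, but neither changes the substance of the argument.
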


\begin{proof}
Obviously, $\theta$ is reflexive and symmetric. For any $\langle a,b\rangle\in \theta$ and $\langle b,c\rangle\in \theta$, if $\langle a,b\rangle\in \Delta_{\mathscr{R}(Q)}$ and $\langle b,c\rangle\in \Delta_{\mathscr{R}(Q)}$, then $a=b=c=0$, we have $\langle a,c\rangle\in \Delta_{\mathscr{R}(Q)}\subseteq \theta$. If $\langle a,b\rangle\in \theta_{\mathscr{IR}(Q)}$ and $\langle b,c\rangle\in \theta_{\mathscr{IR}(Q)}$, since $\theta_{\mathscr{IR}(Q)}$ is an equivalence relation on $\mathscr{IR}({\mathbf{Q}})$, we have $\langle a,c\rangle\in \theta_{\mathscr{IR}(Q)}\subseteq \theta$. If $\langle a,b\rangle\in \Delta_{\mathscr{R}(Q)}$ and $\langle b,c\rangle\in \theta_{\mathscr{IR}(Q)}$, then we have $b\in \mathscr{R}(Q)$ and $b\in \mathscr{IR}(Q)$, this is impossible. If $\langle a,b\rangle\in \theta_{\mathscr{IR}(Q)}$ and $\langle b,c\rangle\in \Delta_{\mathscr{R}(Q)}$, this is also impossible. Hence $\theta$ is an equivalence relation on $\mathbf{Q}$. For any $\langle a,b\rangle\in \theta$ and $\langle c,d\rangle\in \theta$, since $\mathbf{Q}$ is a flat QB-algebra, we have $\langle a\vee c,b\vee d\rangle=\langle a\wedge c,b\wedge d\rangle=\langle 0,0\rangle\in \Delta_{\mathscr{R}(Q)}$ by Lemma \ref{vee 0}. Moreover, if $\langle a,b\rangle\in \Delta_{\mathscr{R}(Q)}$, then $a=b=0$, we have $\langle a^{\ast},b^{\ast}\rangle=\langle 0,0\rangle\in \Delta_{\mathscr{R}(Q)}\subseteq \theta$. If $\langle a,b\rangle\in \theta_{\mathscr{IR}(Q)}$, since $\theta_{\mathscr{IR}(Q)}^{\ast}=\theta_{\mathscr{IR}(Q)}$, we have $\langle a^{\ast},b^{\ast}\rangle\in \theta_{\mathscr{IR}(Q)}\subseteq \theta$. Thus $\theta$ is a congruence on $\mathbf{Q}$.
\end{proof}

Through Proposition \ref{p5.3}, we can see that for a flat QB-algebra $\mathbf{Q}$, the congruence on $\mathscr{R}(\mathbf{Q})$ combined with an equivalence relation on $\mathscr{IR}(Q)$ satisfying a certain condition yields a congruence on $\mathbf{Q}$. Now, a natural question arises: does this approach also apply to non-flat QB-algebras? Below we will further explore this question and aim to fully characterize congruences on a non-flat QB-algebra $\mathbf{Q}$ using the congruences on its subalgebra $\mathscr{R}(\mathbf{Q})$.

\begin{proposition}\label{break}
Let $\mathbf{Q}$ be a non-flat QB-algebra and $\theta_{\mathscr{R}(\mathbf{Q})}$ be a congruence on $\mathscr{R}(\mathbf{Q})$. Assume that

\emph{(C1)} $\theta_{\mathscr{IR}(Q)}$ is an equivalence relation on $\mathscr{IR}(Q)$ with $\theta_{\mathscr{IR}(Q)}^{\ast}=\theta_{\mathscr{IR}(Q)}$ and $\theta_{\mathscr{IR}(Q)} \subseteq \bigcup_{k\in \mathscr{R}({Q})}(cl(k/\theta_{\mathscr{R}(Q)})\times (cl(k/\theta_{\mathscr{R}(Q)})$;

\emph{(C2)} there exists an injective mapping $f: \mathscr{R}(Q)/\theta_{\mathscr{R}(Q)}\rightarrow \mathscr{IR}(Q)/\theta_{\mathscr{IR}(Q)}$ with $f(x/\theta_{\mathscr{R}(Q)})=y/\theta_{\mathscr{IR}(Q)}$ where $y\in cl(x)\cap \mathscr{IR} (Q)$ and $f$ preserves the operation $^{\ast}$;

\emph{(C3)} define a relation
\begin{eqnarray*}
&&\theta_{\mathscr{R}(Q)\times \mathscr{IR}(Q)}  \\
  &=&(\bigcup_{x\in \mathscr{R}(Q)}(x/\theta_{\mathscr{R}(Q)}\times f(x/\theta_{\mathscr{R}(Q)})))\cup (\bigcup_{x\in \mathscr{R}(Q)}(f(x/\theta_{\mathscr{R}(Q)}) \times x/\theta_{\mathscr{R}(Q)} ))  \\
  &=& (\bigcup_{x\in \mathscr{R}(Q)}(x/\theta_{\mathscr{R}(Q)}\times (y/\theta_{\mathscr{IR}(Q)})))\cup (\bigcup_{x\in \mathscr{R}(Q)}(y/\theta_{\mathscr{IR}(Q)} \times x/\theta_{\mathscr{R}(Q)})).
\end{eqnarray*}

Then $\theta=\theta_{\mathscr{R}(Q)}\bigcup \theta_{\mathscr{IR}(Q)} \bigcup \theta_{\mathscr{R}(Q)\times \mathscr{IR}(Q)}$ is a congruence on $\mathbf{Q}$.
\end{proposition}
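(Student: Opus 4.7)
The plan is to verify that $\theta$ is both an equivalence relation on $Q$ and compatible with $\vee$, $\wedge$, and $^*$, by partitioning pairs $\langle a,b\rangle\in Q^2$ according to the types (regular vs.\ irregular) of the two coordinates. Since the three component relations $\theta_{\mathscr{R}(Q)}$, $\theta_{\mathscr{IR}(Q)}$, $\theta_{\mathscr{R}(Q)\times \mathscr{IR}(Q)}$ live on disjoint portions of $Q^2$, the type of a pair unambiguously selects which component it can belong to. Reflexivity and symmetry are then immediate from the corresponding properties of the three pieces, the last of which is symmetric by construction as the union of $A\times B$ with $B\times A$.

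The transitivity verification I plan to organise as a case analysis on the types of $a$, $b$, $c$. The straightforward sub-cases are those in which $a,b,c$ share a single type, or in which exactly one of them differs from the other two: in the latter the two ingredient pairs share an endpoint whose class is unambiguous, and one transports that class across. The delicate sub-case is when $a$ and $c$ share one type while the middle element $b$ has the opposite type. Here I will use that $b$ lies in a unique $\theta_{\mathscr{IR}(Q)}$-class (respectively a unique $\theta_{\mathscr{R}(Q)}$-class), which, combined with the injectivity of $f$ from (C2), forces the two $\theta_{\mathscr{R}(Q)}$-classes (respectively the two $\theta_{\mathscr{IR}(Q)}$-classes) attached to $a$ and $c$ via the mixed component to coincide; hence $\langle a,c\rangle$ lands in the appropriate pure component.

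For compatibility with $\vee$ and $\wedge$, I will first record two structural identities that hold in every QB-algebra: for any $u,v\in Q$ the elements $u\vee v$ and $u\wedge v$ are regular (a direct consequence of (QL1)--(QL5), in particular applying (QL4) twice to $(u\vee v)\vee(u\vee v)=(u\vee u)\vee(v\vee v)$), and moreover $u\vee v=(u\vee u)\vee(v\vee v)$ together with its dual. The key auxiliary claim to prove is then: $\langle a,b\rangle\in\theta$ implies $\langle a\vee a, b\vee b\rangle\in\theta_{\mathscr{R}(Q)}$. This is trivial on the $\theta_{\mathscr{R}(Q)}$-component, is (C1) on the $\theta_{\mathscr{IR}(Q)}$-component, and on the mixed component uses (C1) applied to $\langle b,y\rangle\in\theta_{\mathscr{IR}(Q)}$ (where $y\in f(x/\theta_{\mathscr{R}(Q)})\cap cl(x)$) to witness that $b\vee b$ sits in the same $\theta_{\mathscr{R}(Q)}$-class as $x=y\vee y$, which is also the class of $a$. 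Granted the claim, applying it to both $\langle a,b\rangle$ and $\langle c,d\rangle$ and then invoking that $\theta_{\mathscr{R}(Q)}$ is a congruence on the Boolean subalgebra $\mathscr{R}(\mathbf{Q})$ places $\langle a\vee c,b\vee d\rangle$ inside $\theta_{\mathscr{R}(Q)}\subseteq\theta$, and the argument for $\wedge$ is strictly dual.

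Compatibility with $^*$ I plan to handle componentwise: the $\theta_{\mathscr{R}(Q)}$-part uses that $\theta_{\mathscr{R}(Q)}$ is a Boolean congruence, the $\theta_{\mathscr{IR}(Q)}$-part uses the hypothesis $\theta_{\mathscr{IR}(Q)}^*=\theta_{\mathscr{IR}(Q)}$ from (C1), and the mixed part uses the preservation of $^*$ by $f$ from (C2), which gives $f(x^*/\theta_{\mathscr{R}(Q)})=(f(x/\theta_{\mathscr{R}(Q)}))^*$, so that $\langle a^*,b^*\rangle$ lands back in $\theta_{\mathscr{R}(Q)\times\mathscr{IR}(Q)}$. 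The step I anticipate as the main obstacle is the transitivity analysis in the subtle middle-element-of-opposite-type case: the injectivity of $f$ is genuinely indispensable there, and the bookkeeping required to argue that $b$ lies in a unique class of the appropriate kind --- so that the two $f$-preimages must coincide --- is the most intricate part of the whole argument; every other step, once the identity $u\vee v=(u\vee u)\vee(v\vee v)$ and the regularity of $u\vee v,u\wedge v$ are in hand, amounts to routine reduction to $\theta_{\mathscr{R}(Q)}$.
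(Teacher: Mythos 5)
Your proposal is correct and follows essentially the same route as the paper: an equivalence-relation check by case analysis on the types of the endpoints (with the injectivity of $f$ doing the work precisely in the transitivity case where the middle element has the opposite type, i.e.\ both pairs lie in the mixed component), reduction of $\vee$- and $\wedge$-compatibility to the congruence $\theta_{\mathscr{R}(Q)}$ via the regularity of $u\vee u$ and the identity $u\vee v=(u\vee u)\vee(v\vee v)$, and a componentwise treatment of $^{\ast}$ using (C1) and the $^{\ast}$-preservation of $f$. The only difference is organizational: you factor the nine binary-operation sub-cases through the single auxiliary claim that $\langle a,b\rangle\in\theta$ implies $\langle a\vee a,b\vee b\rangle\in\theta_{\mathscr{R}(Q)}$, which is a cleaner packaging of the same reduction the paper carries out case by case.
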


To complete the proof of Proposition \ref{break},  we first prove that $\theta$ is an equivalence relation.

\begin{lemma}\label{l5.8}
$\theta=\theta_{\mathscr{R}(Q)}\bigcup \theta_{\mathscr{IR}(Q)} \bigcup \theta_{\mathscr{R}(Q)\times \mathscr{IR}(Q)}$ is an equivalence relation on $\mathbf{Q}$
\end{lemma}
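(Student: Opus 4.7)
The plan is to verify reflexivity, symmetry, and transitivity in turn. Reflexivity is immediate: for any $a\in Q$, either $a\in \mathscr{R}(Q)$, in which case $\langle a,a\rangle\in \theta_{\mathscr{R}(Q)}$ since $\theta_{\mathscr{R}(Q)}$ is a congruence, or $a\in \mathscr{IR}(Q)$, in which case $\langle a,a\rangle\in \theta_{\mathscr{IR}(Q)}$ since $\theta_{\mathscr{IR}(Q)}$ is an equivalence on $\mathscr{IR}(Q)$. Symmetry is also immediate: the first two pieces are symmetric by assumption, and the third piece is written as a union of $A\times B$ and $B\times A$, so it is symmetric by construction.

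The real work is transitivity. Given $\langle a,b\rangle,\langle b,c\rangle\in\theta$, I would do a case analysis based on which of the three summands of $\theta$ the two pairs lie in. If both sit in $\theta_{\mathscr{R}(Q)}$ (resp.\ $\theta_{\mathscr{IR}(Q)}$), transitivity follows from the corresponding piece being an equivalence. Mixed cases where one pair is in $\theta_{\mathscr{R}(Q)}$ and the other in $\theta_{\mathscr{IR}(Q)}$ are impossible, because they would force the middle element $b$ to be simultaneously regular and irregular. The cases where one pair is in the cross piece $\theta_{\mathscr{R}(Q)\times\mathscr{IR}(Q)}$ and the other is in $\theta_{\mathscr{R}(Q)}$ or $\theta_{\mathscr{IR}(Q)}$ are handled by unwinding the definition of the cross piece: the $\mathscr{R}(Q)$-class (resp.\ $\mathscr{IR}(Q)$-class) containing $b$ is the one on the appropriate side of a single pair $(x/\theta_{\mathscr{R}(Q)},f(x/\theta_{\mathscr{R}(Q)}))$, and passing from $b$ to $a$ within that class keeps us in the same product.

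The main obstacle is the case where both pairs lie in $\theta_{\mathscr{R}(Q)\times\mathscr{IR}(Q)}$. Here $b$ is either regular or irregular; suppose $b\in\mathscr{IR}(Q)$. Then $\langle a,b\rangle\in x/\theta_{\mathscr{R}(Q)}\times f(x/\theta_{\mathscr{R}(Q)})$ and $\langle b,c\rangle\in f(x'/\theta_{\mathscr{R}(Q)})\times x'/\theta_{\mathscr{R}(Q)}$ for some $x,x'\in\mathscr{R}(Q)$. The irregular classes $f(x/\theta_{\mathscr{R}(Q)})$ and $f(x'/\theta_{\mathscr{R}(Q)})$ both contain $b$, so they coincide as $\theta_{\mathscr{IR}(Q)}$-classes; the injectivity of $f$ in (C2) then forces $x/\theta_{\mathscr{R}(Q)}=x'/\theta_{\mathscr{R}(Q)}$, so $\langle a,c\rangle\in x/\theta_{\mathscr{R}(Q)}\times x/\theta_{\mathscr{R}(Q)}\subseteq\theta_{\mathscr{R}(Q)}\subseteq\theta$. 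The parallel argument with $b\in\mathscr{R}(Q)$ uses only that $\theta_{\mathscr{R}(Q)}$-classes form a partition, and yields $\langle a,c\rangle\in\theta_{\mathscr{IR}(Q)}$. This is the step where the hypothesis that $f$ is a well-defined \emph{injective function} (rather than merely a relation between classes) is indispensable; without it, a single $b$ could tie together two unrelated cross pairs and transitivity would fail.
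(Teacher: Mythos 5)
Your proposal is correct and follows essentially the same route as the paper's proof: reflexivity and symmetry from the three pieces separately, then a case analysis for transitivity in which the mixed $\theta_{\mathscr{R}(Q)}$/$\theta_{\mathscr{IR}(Q)}$ cases are ruled out by the regular/irregular dichotomy on the middle element, and the double-cross-piece case is resolved via injectivity of $f$ (when $b$ is irregular) or the partition property of $\theta_{\mathscr{R}(Q)}$-classes (when $b$ is regular). You correctly isolate the injectivity of $f$ as the indispensable hypothesis, exactly as in the paper.
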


\begin{proof}
For any $a\in Q$, we have $a\in \mathscr{R}(Q)$ or $a\in \mathscr{IR}(Q)$. If $a\in \mathscr{R}(Q)$, then we have $\langle a, a\rangle\in \theta_{\mathscr{R}(Q)} \subseteq \theta$. If $a\in \mathscr{IR}(Q)$, then we have $\langle a, a\rangle\in \theta_{\mathscr{IR}(Q)} \subseteq \theta$, so $\theta$ is reflexive. Note that $\theta_{\mathscr{R}(Q)}, \theta_{\mathscr{IR}(Q)}$, and $\theta_{\mathscr{R}(Q)\times \mathscr{IR}(Q)}$ are symmetric, so $\theta$ is symmetric. For any $\langle a,b\rangle \in \theta$ and $\langle b,c\rangle \in \theta$, we can distinguish several cases to discuss.

(1) Suppose that $\langle a,b \rangle\in \theta_{\mathscr{R}(Q)}$.  If $\langle b,c \rangle\in \theta_{\mathscr{R}(Q)}$, since $\theta_{\mathscr{R}(Q)}$ is a congruence on $\mathscr{R}(\mathbf{Q})$, we have $\langle a,c \rangle\in \theta_{\mathscr{R}(Q)}\subseteq\theta$.  If $\langle b,c \rangle\in \theta_{\mathscr{IR}(Q)}$, then we have $b\in \mathscr{R}(Q)$ and $b\in \mathscr{IR}(Q)$, this is impossible.  If $\langle b,c \rangle\in \theta_{\mathscr{R}(Q)\times \mathscr{IR}(Q)}$, then we have $b\in f(x/\theta_{\mathscr{R}(Q)})$ or $c\in f(x/\theta_{\mathscr{R}(Q)})$. Notice that $\langle a,b\rangle\in \theta_{\mathscr{R}(Q)}$, the former is impossible.
So we have $\langle b,c \rangle\in x/\theta_{\mathscr{R}(Q)}\times f(x/\theta_{\mathscr{R}(Q)})$ for some $x\in \mathscr{R}(Q)$ and then $b/\theta_{\mathscr{R}(Q)}=x/\theta_{\mathscr{R}(Q)}$ and $c/\theta_{\mathscr{IR}(Q)}=f(x/\theta_{\mathscr{R}(Q)})$.
Since $\langle a,b \rangle\in \theta_{\mathscr{R}(Q)}$, we have $a/\theta_{\mathscr{R}(Q)}=x/\theta_{\mathscr{R}(Q)}$, it turns out that $f(a/\theta_{\mathscr{R}(Q)})=f(x/\theta_{\mathscr{R}(Q)})=c/\theta_{\mathscr{IR}(Q)}$, so $\langle a,c\rangle\in \theta_{\mathscr{R}(Q)\times \mathscr{IR}(Q)}\subseteq\theta$.

(2) Suppose that $\langle a,b \rangle\in \theta_{\mathscr{IR}(Q)}$.  If $\langle b,c \rangle\in \theta_{\mathscr{R}(Q)}$, then we have $b\in \mathscr{IR}(Q)$ and $b\in \mathscr{R}(Q)$, this is impossible.  If $\langle b,c \rangle\in \theta_{\mathscr{IR}(Q)}$, since $\theta_{\mathscr{IR}(Q)}$ is an equivalence relation on $\mathscr{IR}(Q)$, we have $\langle a,c \rangle\in \theta_{\mathscr{IR}(Q)}\subseteq\theta$.  If $\langle b,c \rangle\in \theta_{\mathscr{R}(Q)\times \mathscr{IR}(Q)}$, then we have $b\in x/\theta_{\mathscr{R}(Q)}$ or $c\in x/\theta_{\mathscr{R}(Q)}$. Notice that $\langle a,b\rangle\in \theta_{\mathscr{IR}(Q)}$, the former is impossible.
So we have $\langle b,c \rangle\in f(x/\theta_{\mathscr{R}(Q)})\times x/\theta_{\mathscr{R}(Q)}$ for some $x\in \mathscr{R}(Q)$ and then $b/\theta_{\mathscr{IR}(Q)}=f(x/\theta_{\mathscr{R}(Q)})$ and $c/\theta_{\mathscr{R}(Q)}=x/\theta_{\mathscr{R}(Q)}$.
Since $\langle a,b \rangle\in \theta_{\mathscr{IR}(Q)}$, we have $a/\theta_{\mathscr{IR}(Q)}=b/\theta_{\mathscr{IR}(Q)}$, it turns out that $f(c/\theta_{\mathscr{R}(Q)})=f(x/\theta_{\mathscr{R}(Q)})=b/\theta_{\mathscr{IR}(Q)}=a/\theta_{\mathscr{IR}(Q)}$, so $\langle c,a\rangle\in \theta_{\mathscr{R}(Q)\times \mathscr{IR}(Q)}$. Since $\theta_{\mathscr{R}(Q)\times \mathscr{IR}(Q)}$ is symmetric, we have $\langle a,c\rangle\in \theta_{\mathscr{R}(Q)\times \mathscr{IR}(Q)}\subseteq\theta$.

(3) Suppose that  $\langle a,b \rangle\in\theta_{\mathscr{R}(Q)\times \mathscr{IR}(Q)}$.
If $\langle b,c \rangle\in \theta_{\mathscr{R}(Q)}$ or $\theta_{\mathscr{IR}(Q)}$, then $\langle a,c \rangle\in\theta_{\mathscr{R}(Q)\times \mathscr{IR}(Q)}$ can be proved similarly as above.
If $\langle b,c \rangle\in\theta_{\mathscr{R}(Q)\times \mathscr{IR}(Q)}$, then we have that $\langle a,b\rangle\in x/\theta_{\mathscr{R}(Q)}\times f(x/\theta_{\mathscr{R}(Q)})$
and $\langle b,c\rangle\in f(x'/\theta_{\mathscr{R}(Q)})\times x'/\theta_{\mathscr{R}(Q)}$ or $\langle a,b\rangle\in f(x/\theta_{\mathscr{R}(Q)})\times x/\theta_{\mathscr{R}(Q)}$
and $\langle b,c\rangle\in x'/\theta_{\mathscr{R}(Q)}\times f(x'/\theta_{\mathscr{R}(Q)})$ for some $x,x'\in \mathscr{R}(Q)$. If the former holds, then $a/\theta_{\mathscr{R}(Q)}=x/\theta_{\mathscr{R}(Q)}$, $b/\theta_{\mathscr{IR}(Q)}=f(x/\theta_{\mathscr{R}(Q)})$ and $b/\theta_{\mathscr{IR}(Q)}=f(x'/\theta_{\mathscr{R}(Q)})$,
$c/\theta_{\mathscr{R}(Q)}=x'/\theta_{\mathscr{R}(Q)}$, it turns out that $f(x/\theta_{\mathscr{R}(Q)})=f(x'/\theta_{\mathscr{R}(Q)})$. Since $f$ is injective, we have $x/\theta_{\mathscr{R}(Q)}=x'/\theta_{\mathscr{R}(Q)}$, so $a/\theta_{\mathscr{R}(Q)}=c/\theta_{\mathscr{R}(Q)}$ and then $\langle a,c\rangle\in \theta_{\mathscr{R}(Q)}\subseteq\theta$. If the latter holds, then $a/\theta_{\mathscr{IR}(Q)}=f(x/\theta_{\mathscr{R}(Q)})$, $b/\theta_{\mathscr{R}(Q)}=x/\theta_{\mathscr{R}(Q)}$ and $b/\theta_{\mathscr{R}(Q)}=x'/\theta_{\mathscr{R}(Q)}$,
$c/\theta_{\mathscr{IR}(Q)}=f(x'/\theta_{\mathscr{R}(Q)})$ for some $x, x' \in \mathscr{R}(Q)$, it turns out that $x/\theta_{\mathscr{R}(Q)}=x'/\theta_{\mathscr{R}(Q)}$, so $a/\theta_{\mathscr{IR}(Q)}=f(x/\theta_{\mathscr{R}(Q)})=f(x'/\theta_{\mathscr{R}(Q)})=c/\theta_{\mathscr{IR}(Q)}$ and then
$\langle a,c\rangle\in \theta_{\mathscr{IR}(Q)}\subseteq \theta$.

Hence $\theta$ is an equivalence relation on $\mathbf{Q}$.
\end{proof}

\noindent $\mathbf{Proof of Proposition}$ \ref{break}
\begin{proof} By Lemma \ref{l5.8}, we have that $\theta$ is an equivalence relation on $\mathbf{Q}$. To finish the proof, we still need to show that $\theta$ satisfies the compatibility property. For any $\langle a,b\rangle\in \theta$ and $\langle c,d\rangle\in \theta$, we can distinguish several cases to discuss.

(1) Suppose that $\langle a,b\rangle\in \theta_{\mathscr{R}(Q)}$. Since $\theta_{\mathscr{R}(Q)}$ is a congruence on $\mathscr{R}(\mathbf{Q})$, we have $\langle a^{\ast},b^{\ast}\rangle\in \theta_{\mathscr{R}(Q)}\subseteq \theta$. Now we consider the compatibility property with binary operations.
If $\langle c,d\rangle\in \theta_{\mathscr{R}(Q)}$, since $\theta_{\mathscr{R}(Q)}$ is a congruence on $\mathscr{R}(\mathbf{Q})$, we have $\langle a\vee b,c \vee d\rangle\in \theta_{\mathscr{R}(Q)}\subseteq \theta$ and $\langle a\wedge b,c \wedge d\rangle\in \theta_{\mathscr{R}(Q)}\subseteq \theta$.  If $\langle c,d\rangle\in \theta_{\mathscr{IR}(Q)}$, then there exists $k\in \mathscr{R}(Q)$ such that $\langle c,d\rangle\in cl(k/\theta_{\mathscr{R}(Q)})\times cl(k/\theta_{\mathscr{R}(Q)})$ by (C1), it turns out that there exist $k_{c},k_{d}\in k/\theta_{\mathscr{R}(Q)}$ such that $c\vee c=k_{c}\vee k_{c}=k_{c}=k_{c}\wedge k_{c}=c\wedge c$ and $d\vee d=k_{d}\vee k_{d}=k_{d}=k_{d}\wedge k_{d}=d\wedge d$, so $\langle a\vee c,b \vee d\rangle=\langle a\vee (c\vee c),b \vee (d\vee d)\rangle=\langle a\vee k_{c},b \vee k_{d}\rangle\in \theta_{\mathscr{R}(Q)}\subseteq \theta$ and $\langle a\wedge c,b \wedge d\rangle=\langle a\wedge (c\wedge c),b \wedge (d\wedge d)\rangle=\langle a\wedge k_{c},b \wedge k_{d}\rangle\in \theta_{\mathscr{R}(Q)}\subseteq \theta$. If $\langle c,d\rangle\in \theta_{\mathscr{R}(Q)\times \mathscr{IR}(Q)}$, then $\langle c,d\rangle\in x/\theta_{\mathscr{R}(Q)}\times f(x/\theta_{\mathscr{R}(Q)})$ or
$\langle c,d\rangle\in f(x/\theta_{\mathscr{R}(Q)})\times x/\theta_{\mathscr{R}(Q)}$ for some $x\in \mathscr{R}(Q)$. If the former holds,
then we have $c/\theta_{\mathscr{R}(Q)}=x/\theta_{\mathscr{R}(Q)}$ and $d/\theta_{\mathscr{IR}(Q)}=f(x/\theta_{\mathscr{R}(Q)})$.
By the definition of $\emph{f}$, we have $d\in cl(c/\theta_{\mathscr{R}(Q)})\cap \mathscr{IR}(Q)$, it turns out that there exists $t\in \mathscr{R}(Q)$ such that $t/\theta_{\mathscr{R}(Q)}= c/\theta_{\mathscr{R}(Q)}$ and $d\vee d=t\vee t=t$, so $\langle a\vee c,b\vee d\rangle=\langle a\vee c,b\vee (d\vee d)\rangle=\langle a\vee c,b\vee t\rangle\in \theta_{\mathscr{R}(Q)}\subseteq \theta$ and $\langle a\wedge c, b \wedge d\rangle=\langle a\wedge c, b\wedge (d\wedge d)\rangle=\langle a\wedge c, b\wedge t\rangle \in \theta_{\mathscr{R}(Q)}\subseteq \theta$.
If the latter holds, then $\langle a\vee c,b\vee d\rangle\subseteq \theta$ and $\langle a\wedge c,b\wedge d\rangle\subseteq \theta$ can be proved similarly.

(2) Suppose that $\langle a,b\rangle\in \theta_{\mathscr{IR}(Q)}$. Since $\theta_{\mathscr{IR}(Q)}^{\ast}=\theta_{\mathscr{IR}(Q)}$, we have $\langle a^{\ast},b^{\ast}\rangle\in \theta_{\mathscr{IR}(Q)}\subseteq \theta$. Moreover, since $\langle a,b\rangle\in \theta_{\mathscr{IR}(Q)}$, we have $k\in \mathscr{R}(Q)$ such that $\langle a,b\rangle\in cl(k/\theta_{\mathscr{R}(Q)})\times cl(k/\theta_{\mathscr{R}(Q)})$ by (C1), it turns out that there exist $k_{a},k_{b}\in k/\theta_{\mathscr{R}(Q)}$ such that $a\vee a=k_{a}\vee k_{a}=k_{a}=k_{a}\wedge k_{a}=a\wedge a$ and $b\vee b=k_{b}\vee k_{b}=k_{b}=k_{b}\wedge k_{b}=b\wedge b$. Now we consider the compatibility property with binary operations. If $\langle c,d\rangle\in \theta_{\mathscr{R}(Q)}$, then we have $\langle a\vee c,b \vee d\rangle=\langle (a\vee a)\vee c,(b \vee b)\vee d\rangle=\langle k_{a}\vee c ,k_{b}\vee d \rangle\in \theta_{\mathscr{R}(Q)}\subseteq \theta$ and $\langle a\wedge c,b \wedge d\rangle=\langle (a\wedge a)\wedge c,(b \wedge b)\wedge d\rangle=\langle k_{a}\wedge c,k_{b} \wedge d\rangle\in \theta_{\mathscr{R}(Q)}\subseteq \theta$. If $\langle c,d\rangle\in \theta_{\mathscr{IR}(Q)}$, then there exists $w\in \mathscr{R}(Q)$ such that $\langle c,d\rangle\in cl(w/\theta_{\mathscr{R}(Q)})\times cl(w/\theta_{\mathscr{R}(Q)})$ by (C1), it turns out that there exist $w_{c},w_{d}\in w/\theta_{\mathscr{R}(Q)}$ such that $c\vee c=w_{c}\vee w_{c}=w_{c}=w_{c}\wedge w_{c}=c\wedge c$ and $d\vee d=w_{d}\vee w_{d}=w_{d}=w_{d}\wedge w_{d}=d\wedge d$, so $\langle a\vee c,b\vee d\rangle=\langle (a\vee a)\vee (c\vee c),(b\vee b)\vee (d\vee d)\rangle=\langle k_{a}\vee w_{c},k_{b}\vee w_{d}\rangle\in \theta_{\mathscr{R}(Q)}\subseteq \theta$ and $\langle a\wedge c,b\wedge d\rangle=\langle (a\wedge a)\wedge (c\wedge c),(b\wedge b)\wedge (d\wedge d)\rangle=\langle k_{a}\wedge w_{c},k_{b}\wedge w_{d}\rangle\in \theta_{\mathscr{R}(Q)}\subseteq \theta$.
If $\langle c,d\rangle\in \theta_{\mathscr{R}(Q)\times \mathscr{IR}(Q)}$, then $\langle c,d\rangle\in x/\theta_{\mathscr{R}(Q)}\times f(x/\theta_{\mathscr{R}(Q)})$ or
$\langle c,d\rangle\in f(x/\theta_{\mathscr{R}(Q)})\times x/\theta_{\mathscr{R}(Q)}$ for some $x\in \mathscr{R}(Q)$. If the former holds,
then we have $c/\theta_{\mathscr{R}(Q)}=x/\theta_{\mathscr{R}(Q)}$ and $d/\theta_{\mathscr{IR}(Q)}=f(x/\theta_{\mathscr{R}(Q)})$.
By the definition of $\emph{f}$, we have $d\in cl(c/\theta_{\mathscr{R}(Q)})\cap \mathscr{IR}(Q)$, it turns out that there exists $t\in \mathscr{R}(Q)$ such that $t/\theta_{\mathscr{R}(Q)}= c/\theta_{\mathscr{R}(Q)}$ and $d\vee d=t\vee t=t$, so $\langle a\vee c,b\vee d\rangle=\langle (a\vee a)\vee c,(b\vee b)\vee (d\vee d)\rangle=\langle k_{a}\vee c,k_{b}\vee t\rangle\in \theta_{\mathscr{R}(Q)}\subseteq \theta$ and $\langle a\wedge c, b \wedge d\rangle=\langle (a\wedge a)\wedge c, (b\wedge b)\wedge (d\wedge d)\rangle=\langle k_{a}\wedge c, k_{b}\wedge t\rangle \in \theta_{\mathscr{R}(Q)}\subseteq \theta$.
If the latter holds, then $\langle a\vee c,b\vee d\rangle\subseteq \theta$ and $\langle a\wedge c,b\wedge d\rangle\subseteq \theta$ can be proved similarly.

(3) Suppose that $\langle a,b\rangle\in \theta_{\mathscr{R}(Q)\times \mathscr{IR}(Q)}$. Then $\langle a,b\rangle\in x/\theta_{\mathscr{R}(Q)}\times f(x/\theta_{\mathscr{R}(Q)})$ or
$\langle a,b\rangle\in f(x/\theta_{\mathscr{R}(Q)})\times x/\theta_{\mathscr{R}(Q)}$ for some $x\in \mathscr{R}(Q)$. If the former holds, then $a/\theta_{\mathscr{R}(Q)}=x/\theta_{\mathscr{R}(Q)}$ and $b/\theta_{\mathscr{IR}(Q)}=f(x/\theta_{\mathscr{R}(Q)})$ for some $x\in \mathscr{R}(Q)$,
it follows that $f(x^{\ast}/\theta_{\mathscr{R}(Q)})=f(x/\theta_{\mathscr{R}(Q)})^{\ast}=(b/\theta_{\mathscr{IR}(Q)})^{\ast}$ by (C2). Since $\theta_{\mathscr{IR}(Q)}^{\ast}=\theta_{\mathscr{IR}(Q)}$ by (C1), we have $(b/\theta_{\mathscr{IR}(Q)})^{\ast}$ $=b^{\ast}/\theta_{\mathscr{IR}(Q)}$,
it turns out that $a^{\ast}/\theta_{\mathscr{R}(Q)}=x^{\ast}/\theta_{\mathscr{R}(Q)}$ and $f(x^{\ast}/\theta_{\mathscr{R}(Q)})=b^{\ast}/\theta_{\mathscr{IR}(Q)}$, so $\langle a^{\ast},b^{\ast}\rangle\in \theta_{\mathscr{R}(Q)\times \mathscr{IR}(Q)}\subseteq \theta$. If the latter holds, then $\langle a^{\ast},b^{\ast}\rangle\in \theta$ can be proved similarly. Now we see the compatibility property with binary operations. If $\langle c,d\rangle\in \theta_{\mathscr{R}(Q)}$ or $\langle c,d\rangle\in \theta_{\mathscr{IR}(Q)}$, then the proof is similar to the cases (1) and (2).
If $\langle c,d\rangle\in \theta_{\mathscr{R}(Q)\times \mathscr{IR}(Q)}$, then $\langle c,d\rangle\in x'/\theta_{\mathscr{R}(Q)}\times f(x'/\theta_{\mathscr{R}(Q)})$ or
$\langle c,d\rangle\in f(x'/\theta_{\mathscr{R}(Q)})\times x'/\theta_{\mathscr{R}(Q)}$ for some $x'\in \mathscr{R}(Q)$. Without loss of generality, we assume that $\langle a,b\rangle\in x/\theta_{\mathscr{R}(Q)}\times f(x/\theta_{\mathscr{R}(Q)})$ for some  $x\in \mathscr{R}(Q)$. Then $a/\theta_{\mathscr{R}(Q)}=x/\theta_{\mathscr{R}(Q)}$, $b/\theta_{\mathscr{IR}(Q)}=f(x/\theta_{\mathscr{R}(Q)})$ and $c/\theta_{\mathscr{R}(Q)}=x'/\theta_{\mathscr{R}(Q)}$, $d/\theta_{\mathscr{IR}(Q)}=f(x'/\theta_{\mathscr{R}(Q)})$.
By the definition of $\emph{f}$, we have $b\in cl(a/\theta_{\mathscr{R}(Q)})\cap \mathscr{IR}(Q)$ and $d\in cl(c/\theta_{\mathscr{R}(Q)})\cap \mathscr{IR}(Q)$, it turns out that  there exist $s,t\in \mathscr{R}(Q)$ such that $s/\theta_{\mathscr{R}(Q)}= a/\theta_{\mathscr{R}(Q)}$ and $t/\theta_{\mathscr{R}(Q)}= c/\theta_{\mathscr{R}(Q)}$. Also, we have $b\vee b=s\vee s=s=s\wedge s=b\wedge b$  and $d\vee d=t\vee t=t=t\wedge t=d\wedge d$,
so $\langle a\vee c,b \vee d\rangle=\langle a\vee c,(b\vee b) \vee (d\vee d)\rangle=\langle a\vee c,s \vee t\rangle\in \theta_{\mathscr{R}(Q)}\subseteq \theta$ and $\langle a\wedge c,b \wedge d\rangle=\langle a\wedge c,(b\wedge b)\wedge (d\wedge d)\rangle=\langle a\wedge c,s \wedge t\rangle\in \theta_{\mathscr{R}(Q)}\subseteq \theta$. The rest can be proved similarly.

Hence $\theta$ is a congruence on $\mathbf{Q}$.
\end{proof}

We can generalize Proposition \ref{break} to the general case. Indeed, we  can replace the set $\mathscr{R}(Q)/\theta_{\mathscr{R}(Q)}$ in condition (C2) with an arbitrary subset $X$ satisfying $X\subseteq \mathscr{R}(Q)/\theta_{\mathscr{R}(Q)}$. The proof is similar and we omit it here.

\begin{proposition}\label{break2}
Let $\mathbf{Q}$ be a non-flat QB-algebra and $\theta_{\mathscr{R}(\mathbf{Q})}$ be a congruence on $\mathscr{R}(\mathbf{Q})$. Assume that

\emph{(C1)} $\theta_{\mathscr{IR}(Q)}$ is an equivalence relation on $\mathscr{IR}(Q)$ with $\theta_{\mathscr{IR}(Q)}^{\ast}=\theta_{\mathscr{IR}(Q)}$ and $\theta_{\mathscr{IR}(Q)} \subseteq \bigcup_{k\in \mathscr{R}({Q})}(cl(k/\theta_{\mathscr{R}(Q)})\times (cl(k/\theta_{\mathscr{R}(Q)})$;

\emph{(C2)} there exists an injective mapping $f: X \rightarrow \mathscr{IR}(Q)/\theta_{\mathscr{IR}(Q)}$ by $f(x/\theta_{\mathscr{R}(Q)})=y/\theta_{\mathscr{IR}(Q)}$ where
 $X^{\ast}=X\subseteq \mathscr{R}(Q)/\theta_{\mathscr{R}(Q)}$,  $y\in cl(x)\cap \mathscr{IR} (Q)$ and $f$ preserves the operation $^{\ast}$;

\emph{(C3)} define a relation
\begin{eqnarray*}
&&\theta_{\mathscr{R}(Q)\times \mathscr{IR}(Q)}  \\
  &=&(\bigcup_{x/\theta_{\mathscr{R}(Q)}\in X}(x/\theta_{\mathscr{R}(Q)}\times f(x/\theta_{\mathscr{R}(Q)})))\cup (\bigcup_{x/\theta_{\mathscr{R}(Q)}\in X}(f(x/\theta_{\mathscr{R}(Q)}) \times x/\theta_{\mathscr{R}(Q)} ))  \\
  &=& (\bigcup_{x/\theta_{\mathscr{R}(Q)}\in X}(x/\theta_{\mathscr{R}(Q)}\times (y/\theta_{\mathscr{IR}(Q)})))\cup (\bigcup_{x/\theta_{\mathscr{R}(Q)}\in X}(y/\theta_{\mathscr{IR}(Q)} \times x/\theta_{\mathscr{R}(Q)})).
\end{eqnarray*}

Then $\theta=\theta_{\mathscr{R}(Q)}\bigcup \theta_{\mathscr{IR}(Q)} \bigcup \theta_{\mathscr{R}(Q)\times \mathscr{IR}(Q)}$ is a congruence on $\mathbf{Q}$.
\end{proposition}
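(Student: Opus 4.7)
The plan is to mimic the proof of Proposition \ref{break} almost verbatim, treating $X$ in place of $\mathscr{R}(Q)/\theta_{\mathscr{R}(Q)}$ throughout, and to use the new hypothesis $X^{\ast}=X$ only at the one spot where $^{\ast}$-closure of the cross-component $\theta_{\mathscr{R}(Q)\times\mathscr{IR}(Q)}$ is required. Since the whole argument is a long case analysis, the strategy is to check equivalence first, then compatibility with $^{\ast}$, and finally compatibility with $\vee$ and $\wedge$.

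First I would check that $\theta$ is an equivalence relation, following Lemma \ref{l5.8}. Reflexivity and symmetry are immediate, since every element of $Q$ lies in $\mathscr{R}(Q)$ or in $\mathscr{IR}(Q)$ and each of $\theta_{\mathscr{R}(Q)}$, $\theta_{\mathscr{IR}(Q)}$, $\theta_{\mathscr{R}(Q)\times\mathscr{IR}(Q)}$ is symmetric. For transitivity, the only delicate subcase is when $\langle a,b\rangle$ and $\langle b,c\rangle$ both lie in $\theta_{\mathscr{R}(Q)\times\mathscr{IR}(Q)}$: because the middle element $b$ must be simultaneously in an $X$-class (viewed in $\mathscr{R}(Q)$) and in the image of $f$ (viewed in $\mathscr{IR}(Q)$), the two pairs must run in opposite directions, and injectivity of $f$ on $X$ forces the two relevant classes $x/\theta_{\mathscr{R}(Q)}$ and $x'/\theta_{\mathscr{R}(Q)}$ in $X$ to coincide; this places $\langle a,c\rangle$ in $\theta_{\mathscr{R}(Q)}$ or in $\theta_{\mathscr{IR}(Q)}$. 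This step is essentially identical to the $X=\mathscr{R}(Q)/\theta_{\mathscr{R}(Q)}$ case, because $f$ is still injective on its (smaller) domain.

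Next I would verify compatibility with $^{\ast}$. The subcases $\langle a,b\rangle\in\theta_{\mathscr{R}(Q)}$ and $\langle a,b\rangle\in\theta_{\mathscr{IR}(Q)}$ are transcribed verbatim from Proposition \ref{break}, using that $\theta_{\mathscr{R}(Q)}$ is a congruence and $\theta_{\mathscr{IR}(Q)}^{\ast}=\theta_{\mathscr{IR}(Q)}$. In the remaining subcase, a pair $\langle a,b\rangle\in x/\theta_{\mathscr{R}(Q)}\times f(x/\theta_{\mathscr{R}(Q)})$ yields $a^{\ast}/\theta_{\mathscr{R}(Q)}=x^{\ast}/\theta_{\mathscr{R}(Q)}$ and, by (C2), $b^{\ast}/\theta_{\mathscr{IR}(Q)}=f(x^{\ast}/\theta_{\mathscr{R}(Q)})$. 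This last step is only legal because $x^{\ast}/\theta_{\mathscr{R}(Q)}$ still lies in the domain of $f$, which is precisely where the new hypothesis $X^{\ast}=X$ enters; without it the definition of the cross-component would fail to respect $^{\ast}$.

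Compatibility with $\vee$ and $\wedge$ is then handled by the familiar reduction trick: whenever an irregular element $c$ appears inside a join or meet, I would replace it by a regular representative $k_c$ via $c\vee c=c\wedge c=k_c$, extracting $k_c$ from condition (C1) when $\langle c,d\rangle\in\theta_{\mathscr{IR}(Q)}$ and from the definition of $f$ together with (C2) when $\langle c,d\rangle\in\theta_{\mathscr{R}(Q)\times\mathscr{IR}(Q)}$. After all substitutions both sides of the desired related pair become regular, so the relation reduces to $\theta_{\mathscr{R}(Q)}$-compatibility. I expect the only real obstacle to be bookkeeping: one must verify in every mixed sub-case that the resulting regular representatives (the $k_a,k_b,k_c,k_d$ or their analogues $s,t$ of Proposition \ref{break}) land in $\theta_{\mathscr{R}(Q)}$-related classes, which holds because the cross-component, by construction, matches an $X$-class with a single $\theta_{\mathscr{IR}(Q)}$-class of elements all sharing the same cloud. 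Restricting the matching to the subset $X$ rather than to the whole quotient $\mathscr{R}(Q)/\theta_{\mathscr{R}(Q)}$ affects none of these verifications, which is why the proof can indeed be omitted in favor of a reference to the argument for Proposition \ref{break}.
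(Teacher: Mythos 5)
Your proposal is correct and matches the paper's intent exactly: the paper omits this proof, stating only that it is ``similar'' to that of Proposition \ref{break}, and your adaptation --- carrying the case analysis over verbatim with $X$ in place of $\mathscr{R}(Q)/\theta_{\mathscr{R}(Q)}$, and correctly identifying that the new hypothesis $X^{\ast}=X$ is needed precisely (and only) to keep $x^{\ast}/\theta_{\mathscr{R}(Q)}$ in the domain of $f$ when verifying $^{\ast}$-compatibility of the cross-component --- is the intended argument. The transitivity and $\vee$/$\wedge$-compatibility checks go through unchanged because, as you note, injectivity of $f$ and the reduction to regular representatives via (C1) and (C2) do not depend on $f$ being defined on all of $\mathscr{R}(Q)/\theta_{\mathscr{R}(Q)}$.
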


\begin{proposition}\label{theta123}
Let $\mathbf{Q}$ be a non-flat QB-algebra and $\theta$ be a congruence on $\mathbf{Q}$. Then there exist $\theta_{\mathscr{R}(Q)}$, $\theta_{\mathscr{IR}(Q)}$, and an injective mapping $f: X\rightarrow \mathscr{IR}(Q)/\theta_{\mathscr{IR}(Q)}$ which satisfy the conditions in Proposition \ref{break2}.
\end{proposition}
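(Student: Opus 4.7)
The plan is to extract the three pieces directly by restricting $\theta$ to the natural components of $\mathbf{Q}$, and then to verify that the reassembly prescribed in Proposition~\ref{break2} recovers $\theta$. I would set
\[
\theta_{\mathscr{R}(Q)}:=\theta\cap \mathscr{R}(Q)^{2}, \qquad \theta_{\mathscr{IR}(Q)}:=\theta\cap \mathscr{IR}(Q)^{2}.
\]
Since $\mathscr{R}(\mathbf{Q})$ is a subalgebra of $\mathbf{Q}$, the first is automatically a congruence on $\mathscr{R}(\mathbf{Q})$, and the second is an equivalence relation on $\mathscr{IR}(Q)$. For (C1), the identity $y^{*}\vee y^{*}=(y\vee y)^{*}$ (from (QB4) and (QL5)) shows that $y$ is regular iff $y^{*}$ is, so $\mathscr{IR}(Q)$ is stable under $^{*}$; this together with the compatibility of $\theta$ with $^{*}$ yields $\theta_{\mathscr{IR}(Q)}^{*}=\theta_{\mathscr{IR}(Q)}$. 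For the cloud containment, any $\langle a,b\rangle\in\theta_{\mathscr{IR}(Q)}$ gives $\langle a\vee a, b\vee b\rangle\in\theta_{\mathscr{R}(Q)}$; taking $k:=a\vee a$, we have $b\vee b\in k/\theta_{\mathscr{R}(Q)}$, $a\in cl(k)$, and $b\in cl(b\vee b)$, so $\langle a,b\rangle\in cl(k/\theta_{\mathscr{R}(Q)})\times cl(k/\theta_{\mathscr{R}(Q)})$.

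For (C2), I would define
\[
X:=\{\,x/\theta_{\mathscr{R}(Q)}\mid \exists\, y\in \mathscr{IR}(Q)\text{ with }\langle x,y\rangle\in\theta\,\}\subseteq \mathscr{R}(Q)/\theta_{\mathscr{R}(Q)}.
\]
For each class in $X$ pick any witness $y$ and use the canonical representative $x':=y\vee y$, which by the previous paragraph lies in $x/\theta_{\mathscr{R}(Q)}$ and satisfies $y\in cl(x')$, so that the literal requirement $y\in cl(x)\cap\mathscr{IR}(Q)$ is met. Set $f(x/\theta_{\mathscr{R}(Q)}):=y/\theta_{\mathscr{IR}(Q)}$. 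Well-definedness follows because two witnesses $y_{1},y_{2}$ for the same class give $\langle y_{1},y_{2}\rangle\in\theta\cap\mathscr{IR}(Q)^{2}=\theta_{\mathscr{IR}(Q)}$ by transitivity; injectivity is the same chain run backwards. Applying $^{*}$ to the witness pair simultaneously gives $X^{*}=X$ and $f((x/\theta_{\mathscr{R}(Q)})^{*})=(f(x/\theta_{\mathscr{R}(Q)}))^{*}$, since $y^{*}\in\mathscr{IR}(Q)$ and $\langle x^{*},y^{*}\rangle\in\theta$.

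Finally, I would verify that $\theta$ equals the congruence $\theta_{\mathscr{R}(Q)}\cup \theta_{\mathscr{IR}(Q)}\cup \theta_{\mathscr{R}(Q)\times \mathscr{IR}(Q)}$ assembled in (C3). The inclusion from right to left is transparent: the first two pieces are restrictions of $\theta$, and a pair in the cross piece with $a\in x/\theta_{\mathscr{R}(Q)}$ and $b\in f(x/\theta_{\mathscr{R}(Q)})$ is connected by $\langle a,x\rangle,\langle x,y\rangle,\langle y,b\rangle\in\theta$ (with $y$ the chosen witness), so transitivity gives $\langle a,b\rangle\in\theta$. Conversely, any $\langle a,b\rangle\in\theta$ falls into one of three cases according to whether $a,b$ are both regular, both irregular, or mixed; the first two cases land directly in $\theta_{\mathscr{R}(Q)}$ or $\theta_{\mathscr{IR}(Q)}$, while in the mixed case well-definedness of $f$ forces $\langle a,b\rangle$ into $\theta_{\mathscr{R}(Q)\times \mathscr{IR}(Q)}$.

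The main subtlety is not any deep obstruction but the bookkeeping in (C2): the hypothesis $y\in cl(x)$ requires $y\vee y=x$ on the nose, not merely $\langle x,y\vee y\rangle\in\theta_{\mathscr{R}(Q)}$. This is handled by always replacing the chosen regular element with the unique regular element $y\vee y$ inside the cloud of the witness (Lemma~\ref{Lemma Cloud}) before declaring it the representative of its $\theta_{\mathscr{R}(Q)}$-class in $X$. Once this choice is fixed, both the preservation of $^{*}$ under $f$ and the final recovery of $\theta$ reduce to routine chases through the compatibility of $\theta$ with $\vee$, $\wedge$, and $^{*}$.
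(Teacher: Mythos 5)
Your proposal is correct and follows essentially the same route as the paper: restrict $\theta$ to $\mathscr{R}(Q)^{2}$ and $\mathscr{IR}(Q)^{2}$, let $X$ consist of the regular classes $\theta$-related to some irregular element, define $f$ via witnesses, and verify well-definedness, injectivity, $^{*}$-preservation, and the recovery of the cross part. Your renormalization of the representative to $y\vee y$ so that $y\in cl(x)$ holds on the nose is exactly the device the paper builds into its definition of $X$.
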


\begin{proof}
Suppose that $\theta$ is a congruence on the non-flat QB-algebra $\mathbf{Q}$. Consider $\theta_{\mathscr{R}(Q)}=\theta \cap \mathscr{R}(Q)^{2}$.  Then it is easy to see that $\theta_{\mathscr{R}(Q)}$ is a congruence on $\mathscr{R}(\mathbf{Q})$.
Put $\theta_{\mathscr{IR}(Q)}=\theta \cap \mathscr{IR}(Q)^{2}$. We check the condition (C1).

(C1) By the definition of $\theta_{\mathscr{IR}(Q)}$, we have that $\theta_{\mathscr{IR}(Q)}$ is an equivalence relation on $\mathscr{IR}(Q)$. Meanwhile, since $\theta$ is a congruence on $\mathbf{Q}$ and $(\mathscr{IR}(Q))^{\ast}=\mathscr{IR}(Q)$, we have $\theta_{\mathscr{IR}(Q)}^{\ast}=\theta_{\mathscr{IR}(Q)}$. For any $\langle a,b\rangle\in \theta_{\mathscr{IR}(Q)}$, then we have $\langle a\vee a,b\vee b\rangle\in \theta\cap \mathscr{R}(Q)^{2}= \theta_{\mathscr{R}(Q)}$ and then $(a\vee a)/\theta_{\mathscr{R}(Q)}=(b\vee b)/\theta_{\mathscr{R}(Q)}$, it turns out that $a\in cl(a\vee a)\subseteq cl((a\vee a)/\theta_{\mathscr{R}(Q)})$ and $b\in cl(b\vee b)\subseteq cl((b\vee b)/\theta_{\mathscr{R}(Q)})=cl((a\vee a)/\theta_{\mathscr{R}(Q)})$, so $\langle a,b \rangle \in (cl(a\vee a)/\theta_{\mathscr{R}(Q)})\times cl((a\vee a)/\theta_{\mathscr{R}(Q)})$. Denote $k=a\vee a$. Then we have $k\in \mathscr{R}(Q)$ and $\theta_{\mathscr{IR}(Q)}\subseteq \bigcup_{k\in \mathscr{R}({Q})}(cl(k/\theta_{\mathscr{R}(Q)})\times (cl(k/\theta_{\mathscr{R}(Q)}))$.

We denote $X=\{x/\theta_{\mathscr{R}(Q)} \,|\, x=y\vee y \hbox{\ where\ } y \in cl(x)\cap\mathscr{IR}(Q)\cap x/\theta\}$. Then $X\subseteq \mathscr{R}(Q)/\theta_{\mathscr{R}(Q)}$ and $X^{\ast}=X$.
Now, we define a mapping $f:X\rightarrow \mathscr{IR}(Q)/\theta_{\mathscr{IR}(Q)}$ by $f(x/\theta_{\mathscr{R}(Q)})=y/\theta_{\mathscr{IR}(Q)}$. If $X$ is empty, then $f$ is obviously injective. If $X$ is non-empty, now we check the condition (C2).

(C2) For any $a/\theta_{\mathscr{R}(Q)}\in X$, if $f(a/\theta_{\mathscr{R}(Q)})=b/\theta_{\mathscr{IR}(Q)}$ and $f(a/\theta_{\mathscr{R}(Q)})=c/\theta_{\mathscr{IR}(Q)}$ where $a=b\vee b=c\vee c$ and $b,c\in \mathscr{IR}(Q)\cap a/\theta$, then $\langle a,b\rangle\in \theta$ and $\langle a,c\rangle\in \theta$. Since $\theta$ is a congruence on $\mathbf{Q}$, we have $\langle b,c\rangle\in \theta$ and then $\langle b,c\rangle\in \theta\cap \mathscr{IR}(Q)^{2}=\theta_{\mathscr{IR}(Q)}$, so $c/\theta_{\mathscr{IR}(Q)}=b/\theta_{\mathscr{IR}(Q)}$. Thus $f$ is well-defined.
If $f(a_{1}/\theta_{\mathscr{R}(Q)})=f(a_{2}/\theta_{\mathscr{R}(Q)})$, then there exist $b_{1},b_{2}\in \mathscr{IR}(Q)$ satisfying $\langle a_{1},b_{1}\rangle,\langle a_{2},b_{2}\rangle\in \theta$ and  $b_{1}/\theta_{\mathscr{IR}(Q)}=b_{2}/\theta_{\mathscr{IR}(Q)}$, it turns out that $\langle b_{1},b_{2}\rangle\in \theta_{\mathscr{IR}(Q)}\subseteq\theta$. Since $\theta$ is a congruence on $\mathbf{Q}$, we have $\langle a_{1},a_{2}\rangle\in \theta\cap \mathscr{R}(Q)^{2}=\theta_{\mathscr{R}(Q)}$ and then $a_{1}/\theta_{\mathscr{R}(Q)}=a_{2}/\theta_{\mathscr{R}(Q)}$. Thus $\emph{f}$ is injective.
Finally, we check that $f$ preserves the operation $^{\ast}$.
If $f(a/\theta_{\mathscr{R}(Q)})=b/\theta_{\mathscr{IR}(Q)}$, then we have $a=b\vee b$ where $b\in cl(a)\cap\mathscr{IR}(Q)\cap a/\theta$. Since $\theta$ is a congruence on $\mathbf{Q}$ and $b^{\ast}\in cl(a^{\ast})$ by Lemma \ref{Lemma number}(2), we have $b^{\ast}\in a^{\ast}/\theta\cap cl(a^{\ast})$. Moreover, $b^{\ast}\in \mathscr{IR}(Q)$,  we have $a^{\ast}/\theta_{\mathscr{R}(Q)} \in X$ and $a^{\ast}=b^{\ast}\vee b^{\ast}$, so $f(a^{\ast}/\theta_{\mathscr{R}(Q)})=b^{\ast}/\theta_{\mathscr{IR}(Q)}=(b/\theta_{\mathscr{IR}(Q)})^{\ast}=f(a/\theta_{\mathscr{R}(Q)})^{\ast}$ by (C1).

Finally, we check the condition (C3).

(C3) Denote $\theta_{\mathscr{R}(Q)\times \mathscr{IR}(Q)}=\theta\setminus (\theta_{\mathscr{R}(Q)}\cup\theta_{\mathscr{IR}(Q)})$.
Then we have $\theta_{\mathscr{R}(Q)\times \mathscr{IR}(Q)}=\{\langle a,b\rangle\in \theta\,|\,(a\in \mathscr{R}(Q),b\in \mathscr{IR}(Q))$ or $(b\in \mathscr{R}(Q),a\in \mathscr{IR}(Q))\}$. Below we calculate that
\begin{align}
     \theta_{\mathscr{R}(Q)\times \mathscr{IR}(Q)}&=(\bigcup_{x/\theta_{\mathscr{R}(Q)}\in X}(x/\theta_{\mathscr{R}(Q)}\times f(x/\theta_{\mathscr{R}(Q)})))\cup (\bigcup_{x/\theta_{\mathscr{R}(Q)}\in X}(f(x/\theta_{\mathscr{R}(Q)}) \times x/\theta_{\mathscr{R}(Q)} )).\nonumber
\end{align}
Indeed, for $\langle a,b\rangle\in \theta_{\mathscr{R}(Q)\times \mathscr{IR}(Q)}$, we have $a\in \mathscr{R}(Q),b\in \mathscr{IR}(Q)$ or $b\in \mathscr{R}(Q),a\in \mathscr{IR}(Q)$. If the former holds, then there exist $x\in\mathscr{R}(Q)$ and $y\in\mathscr{IR}(Q)$ such that $\langle x,a\rangle\in \theta_{\mathscr{R}(Q)}$ and $\langle y,b\rangle\in \theta_{\mathscr{IR}(Q)}$. Since $\langle a,b\rangle, \langle x,a\rangle, \;\mathrm{and}\;\langle y,b\rangle \in \theta$, we have $\langle x,y\rangle\in \theta$, it implies that $y\in (x/\theta)\cap \mathscr{IR}(Q)$. Moreover, by (C1), we can get that $y\in cl((b\vee b)/\theta_{\mathscr{R}(Q)})=cl(a/\theta_{\mathscr{R}(Q)})=cl(x/\theta_{\mathscr{R}(Q)})$ and then $y\in cl(x)$. Hence $x/\theta_{\mathscr{R}(Q)}\in X$ and $f(x/\theta_{\mathscr{R}(Q)})=y/\theta_{\mathscr{IR}(Q)}$. Note that $a\in x/\theta_{\mathscr{R}(Q)}$ and $b\in f(x/\theta_{\mathscr{R}(Q)})$, so $\langle a,b\rangle\in x/\theta_{\mathscr{R}(Q)}\times f(x/\theta_{\mathscr{R}(Q)})$. If the latter holds, then we have $\langle a,b\rangle\in f(x/\theta_{\mathscr{R}(Q)}) \times x/\theta_{\mathscr{R}(Q)}$ similarly. Thus  \begin{align}
     \theta_{\mathscr{R}(Q)\times \mathscr{IR}(Q)}&=(\bigcup_{x/\theta_{\mathscr{R}(Q)}\in X}(x/\theta_{\mathscr{R}(Q)}\times f(x/\theta_{\mathscr{R}(Q)})))\cup (\bigcup_{x/\theta_{\mathscr{R}(Q)}\in X}(f(x/\theta_{\mathscr{R}(Q)}) \times x/\theta_{\mathscr{R}(Q)} )).\nonumber
\end{align}
\end{proof}

\begin{remark}
It should be noted that the set $X$ in Proposition \ref{theta123} is not necessarily $\mathscr{R}(Q)/\theta_{\mathscr{R}(Q)}$,
thereby these $\theta_{\mathscr{R}(Q)}$, $\theta_{\mathscr{IR}(Q)}$, and the mapping $f$  do not necessarily satisfy the conditions of Proposition  \ref{break}.

\end{remark}

At the end of this section, we demonstrate the methods discussed earlier with some examples.

\begin{example}\label{exa 4break}
Let $\mathbf{4}$ be the QB-algebra defined in Example \ref{Example 4}. Then $\mathscr{R}(4)=\{0,1\}$ and $\theta=\{\langle0,0\rangle,\langle0,1\rangle, \langle1,0\rangle,\langle1,1\rangle\}$ is a congruence on $\mathscr{R}(\mathbf{4})$. Following from Proposition \ref{xy con}, we have that $\theta_{a,b}=\theta \cup \Delta \cup \{\langle a,a\rangle, \langle b,b\rangle \}$ is a congruence on $\mathbf{4}$. In fact, we calculate that $\theta_{a,b}=\{\langle0,0\rangle,\langle0,1\rangle, \langle1,\\ 0\rangle,\langle1,1\rangle,\langle a,a\rangle, \langle b,b\rangle \}$ is indeed a congruence on $\mathbf{4}$. Moreover, following Proposition \ref{theta123}, we can decompose the congruence $\theta_{a,b}=\theta_{\mathscr{R}(4)}\cup\theta_{\mathscr{IR}(4)}\cup\theta_{\mathscr{R}(4)\times \mathscr{IR}(4)}$, where $\theta_{\mathscr{R}(4)}=\{\langle0,0\rangle,\langle0,1\rangle, \langle1,0\rangle,\\ \langle1,1\rangle\}$, $\theta_{\mathscr{IR}(4)}=\{\langle a,a\rangle, \langle b,b\rangle\}$, and
$\theta_{\mathscr{R}(4)\times \mathscr{IR}(4)}=\emptyset$.
\end{example}

\begin{example}
Let $\mathbf{F}_{3}$ be the flat QB-algebra defined in Example \ref{Example F3}. Then $c\neq d$, $c^{\ast}\neq c$, and $d^{\ast}\neq d$.
Following from Proposition $\ref{xy con2}$,  we have that $\theta_{c,d}=\Delta\cup \{\langle c,d\rangle, \langle d,c\rangle, \langle c^{\ast},d^{\ast}\rangle, \langle d^{\ast},c^{\ast}\rangle,\langle c,c^{\ast}\rangle,$ $\langle c^{\ast},c\rangle,\langle d,d^{\ast}\rangle,\langle d^{\ast},d\rangle,\langle c,d^{\ast}\rangle, \langle d^{\ast},c\rangle,\langle c^{\ast},d\rangle,\langle d,c^{\ast}\rangle \}=\{\langle0,0\rangle,\langle c,c\rangle, \langle d,d\rangle, \langle c,d\rangle,\langle d,c\rangle\}$ is a congruence on $\mathbf{F}_{3}$. Moreover, according to Proposition \ref{p5.3}, we can decompose the congruence $\theta_{c,d}=\Delta_{\mathscr{R}(\mathbf{F}_{3})}\cup \theta_{\mathscr{IR}(F_{3})}$, where $\Delta_{\mathscr{R}(\mathbf{F}_{3})}=\{\langle0,0\rangle\}$ and $\theta_{\mathscr{IR}(F_{3})}=\{\langle c,c\rangle, \langle d,d\rangle,\langle c,d\rangle, \langle d,c\rangle\}$.
\end{example}

\begin{remark}
In Example \ref{exa 4break}, we can see that $X_{4}=\emptyset$ and $\mathscr{IR}(4)/\theta_{\mathscr{IR}(4)}=\{\{a\},\{b\}\}$ following from Proposition \ref{theta123}. Thus the mapping $f$ is injective but not surjective.
\end{remark}

\begin{example}\label{E con}
Let $\mathbf{6}$ be the QB-algebra defined in Example \ref{Example E} and $\theta=\{\langle 0,0\rangle,\langle 0,1\rangle, \langle 1,0\rangle,\langle 1,1\rangle,\langle a,\\a\rangle,\langle e,e\rangle,\langle f,f\rangle,\langle b,b\rangle,\langle 0,a\rangle, \langle 0,e\rangle,\langle a,0\rangle,\langle e,0\rangle,\langle a,e\rangle,\langle e,a\rangle,\langle 1,f\rangle, \langle 1,b\rangle,\langle f,1\rangle,\langle b,1\rangle,\langle f,b\rangle,\langle b,f\rangle \}$ be a congruence on $\mathbf{6}$. Then following Proposition \ref{theta123}, we can decompose the congruence $\theta=\theta_{\mathscr{R}(6)}\cup\theta_{\mathscr{IR}(6)}\cup\theta_{\mathscr{R}(6)\times \mathscr{IR}(6)}$, where $\theta_{\mathscr{R}(6)}=\{\langle0,0\rangle,\langle0,1\rangle, \langle1,0\rangle,\langle1,1\rangle\}$, $\theta_{\mathscr{IR}(6)}=\{\langle a,a\rangle, \langle e,e\rangle,\\ \langle f,f\rangle,\langle b,b\rangle\}$, and
$\theta_{\mathscr{R}(6)\times \mathscr{IR}(6)}=\{\langle 0,a\rangle, \langle 0,e\rangle,\langle a,0\rangle, \langle e,0\rangle,\langle a,e\rangle,\langle e,a\rangle,\langle 1,f\rangle, \langle 1,b\rangle,\langle f,1\rangle,\langle b,1\rangle,\\ \langle f,b\rangle,\langle b,f\rangle\}$.
\end{example}

\begin{remark}
In Example \ref{E con}, we can see that $X_{6}=\{\{0\},\{1\}\}$ and $\mathscr{IR}(6)/\theta_{\mathscr{IR}(6)}=\{\{a,e\},\{f,b\}\}$ following from Proposition \ref{theta123}. Thus the mapping $f$ is bijective.
\end{remark}

\noindent\textbf{Acknowledgement}

This study was funded by Shandong Provincial Natural Science Foundation, China
(No. ZR2020MA041).

\begin{bibdiv}
  \begin{biblist}

\bib{a11}{article}{
  title={Completion and amalgamation of bounded distributive quasi lattices},
  author={Majid, Alizadeh and Antonio, Ledda and Hector, Freytes},
  journal={Logic Journal of the IGPL},
  year={2011},
  volume={19},
  pages={110-120},
  url={https://api.semanticscholar.org/CorpusID:2115140}
}

\bib{Booleanalgebra}{book}{
  title={The mathematical analysis of logic},
  author={George, Boole},
  year={1847},
  publisher={Cambridge University Press}
}

\bib{course}{book}{
  title={A Course in Universal Algebra},
  author={Stanley, Burris and Sankappanavar, Hanamantagouda},
  year={1981},
  publisher={Springer-Verlag}
}

\bib{q-lattice}{article}{
    title = {Lattices in quasiordered sets},
  author = {Chajda, Ivan},
  journal = {Acta Universitatis Palackianae Olomucensis. Facultas Rerum Naturalium. Mathematica},
  pages = {6-12},
  publisher = {Palacky University Olomouc},
  url = {http://eudml.org/doc/23550},
  volume = {31},
  year = {1992},
}

\bib{quasiordered}{article}{
  author = {Chajda, Ivan},
  journal = {Mathematica Bohemica},
  pages = {129-135},
  publisher = {Institute of Mathematics, Academy of Sciences of the Czech Republic},
  title = {An algebra of quasiordered logic},
  url = {http://eudml.org/doc/29235},
  volume = {119},
  year = {1994},
}

\bib{chen20}{article}{
  author = {Chen, Wenjuan and Wang, Hongkai},
  title = {Filters and ideals in the generalization of pseudo-BL algebras},
  year = {2020},
  publisher = {Springer-Verlag},
  volume = {24},
  pages = {795-812},
  url = {https://doi.org/10.1007/s00500-019-04528-9},

}

\bib{chen23}{article}{
  author = {Chen, Wenjuan and Xu, Jinfan},
  title = {Quasi-pseudo-BL algebras and weak filters},
  year = {2022},
  publisher = {Springer-Verlag},
  volume = {27},
  pages = {2185-2204},
  url = {https://doi.org/10.1007/s00500-022-07744-y},

}

\bib{congruenceadd}{article}{
    author  = {Anatolij, Dvure\v{c}enskij and Omid, Zahiri},
    title   = {Some Results on Quasi MV-Algebras and Perfect Quasi MV-Algebras},
    journal = {Studia Logica},
    year    = {2025},
    pages   = {1-37},
}

\bib{BooleanCEP}{book}{
  title={Introduction to Boolean Algebras},
  author={ Givant, Steven  and  Halmos, Paul },
  publisher={Springer},
  year={2009},
}

\bib{qMVstandard}{article}{
  title={MV-algebras and quantum computation},
  author={Ledda, Antonio and Konig, Martinvaldo and Paoli, Francesco and Giuntini, Roberto},
  journal={Studia Logica},
  volume={82},
  pages={245-270},
  year={2006},
  publisher={Springer}
}

\bib{Engineering}{article}{
  title={Quasi-Boolean algebras: a generalization of Boolean algebras},
  author={Lv, Yajie and Chen, Wenjuan},
  journal={Engineering Letter},
  volume={30},
  pages={1372-1376},
  year={2022},
}

\bib{QB}{article}{
    title     = {The spectra of quasi-Boolean algebras},
    author    = {Lv, Yajie and Chen, Wenjuan},
    journal   = {Logic Journal of the IGPL},
    year      = {2024},
    volume    = {32},
    pages     = {139-163},
}

\bib{DeM}{book}{
  title={An algebraic approach to non-classical logics},
  author={Helena, Rasiowa},
  year={1974},
  publisher={North-Holland Publishing Company},
}

\bib{Nelsonalgebras}{article}{
   title = {Representation of Nelson algebras by rough sets determined by quasiorders},
   author = {J\"{a}rvinen, Jouni and Radeleczki, S\'andor},
   journal = {Algebra universalis},
   pages = {163-179},
   publisher = {Springer Science and Business Media LLC},
   url = {http://dx.doi.org/10.1007/s00012-011-0149-9},
   volume = {66},
   year = {2011},
}

  \end{biblist}
\end{bibdiv}
\raggedright
\end{document}